\documentclass{article}
\newif\ifmarkup
\markupfalse

\newcommand{\markupadd}[1]{%
\ifmarkup
\textcolor{blue}{#1}%
\else
#1%
\fi
}

\newcommand{\markupdelete}[1]{%
\ifmarkup
\textcolor{red}{#1}%
\else
\fi
}

\usepackage{inputenc}
\usepackage{amsmath, dsfont}
\usepackage{times}
\usepackage{amssymb}
\usepackage{amsthm}
\usepackage{bm}
\usepackage{graphicx}
\usepackage{caption}
 \usepackage[margin = 1in]{geometry}
 \usepackage{cite}
\usepackage{color}
\usepackage{subfigure}
\usepackage{comment}
\usepackage{enumitem}
\usepackage{mathtools}
\usepackage{thmtools, thm-restate}
\usepackage[ruled]{algorithm2e}
\usepackage{verbatim}
\usepackage[noend]{algorithmic}

\SetKwComment{Comment}{/* }{ */}

\usepackage[colorlinks,urlcolor=blue,citecolor=blue,linkcolor=blue]{hyperref}

\DeclareMathOperator{\argmin}{\mathrm{argmin}}

\usepackage[colorinlistoftodos]{todonotes}

\newcommand{\gradBMV}{BVG$_{\mbox{\footnotesize max}}$}
\newcommand{\gradBMO}{BVG$_{\mbox{\footnotesize avg}}$}
\newcommand{\grad}{\gamma_f}

\newcommand{\ext}{\operatorname{vert}}
\newcommand{\vol}{\operatorname{vol}}
\newcommand{\unif}{\operatorname{Unif}}
\newcommand{\innp}[1]{\left\langle #1 \right\rangle}

\newcommand{\mA}{\mathbf{A}}
\newcommand{\ones}{\mathds{1}}
\newcommand{\zeros}{\textbf{0}}
\newcommand{\vx}{\mathbf{x}}
\newcommand{\va}{\mathbf{a}}
\newcommand{\vb}{\mathbf{b}}
\newcommand{\Llocr}{\widehat{L}_r}

\newcommand{\Lr}{{L}_r}

\newcommand{\dd}{\mathrm{d}}
\newcommand{\cx}{\mathcal{X}}

\newcommand{\cb}{\mathcal{B}}
\newcommand{\cd}{\mathcal{D}}
\newcommand{\cbr}{\cb_{r}}

\newcommand{\cB}{\mathcal{B}}

\newcommand{\cs}{\mathcal{S}}

\newcommand{\vy}{\mathbf{y}}

\newcommand{\vz}{\mathbf{z}}
\newcommand{\vv}{\mathbf{v}}
\newcommand{\ve}{\mathbf{e}}
\newcommand{\vw}{\mathbf{w}}

\newcommand{\vg}{\mathbf{g}}
\newcommand{\vu}{\mathbf{u}}
\newcommand{\vh}{\mathbf{h}}

\newcommand{\rr}{\mathbb{R}}
\newcommand{\ee}{\mathbb{E}}

\newcommand{\proj}{\Pi}

\graphicspath{{imgs/}}

\theoremstyle{plain} \numberwithin{equation}{section}
\newtheorem{theorem}{Theorem}[section]
\numberwithin{theorem}{section}
\newtheorem{corollary}[theorem]{Corollary}

\newtheorem{lemma}[theorem]{Lemma}
\newtheorem{proposition}[theorem]{Proposition}

\theoremstyle{definition}
\newtheorem{definition}[theorem]{Definition}

\newtheorem{remark}[theorem]{Remark}
\newtheorem{example}[theorem]{Example}

\title{Optimization on a Finer Scale:\\ Bounded Local Subgradient Variation Perspective}
\author{Jelena Diakonikolas\footnote{
Department of Computer Sciences, 
University of Wisconsin-Madison, 
\texttt{jelena@cs.wisc.edu}}
\and Crist\'{o}bal Guzm\'{a}n\footnote{
Institute for Mathematical and Computational Engineering, 
              Faculty of Mathematics and School of Engineering, 
              Pontificia Universidad Cat\'olica de Chile, 
              \texttt{crguzmanp@mat.uc.cl}}
              }
\date{}

\begin{document}
\maketitle

\begin{abstract}
We initiate the study of nonsmooth optimization problems under bounded local subgradient variation, which postulates bounded difference between (sub)gradients in small local regions around points, in either average or maximum sense. The resulting class of objective functions encapsulates the classes of objective functions traditionally studied in the optimization literature, which are defined based on either Lipschitz continuity of the objective or H\"{o}lder/Lipschitz continuity of the function's gradient. Further, the defined class is richer in the sense that it contains functions that are neither Lipschitz continuous nor have a H\"{o}lder continuous gradient. Finally, when restricted to the aforementioned traditional classes of optimization problems, the constants defining the studied classes lead to more fine-grained oracle complexity bounds. 
Some highlights of our results are that: (i) it is possible to obtain complexity results for both convex and nonconvex optimization problems with (local or global) Lipschitz constant being replaced by a constant of local subgradient variation, corresponding to small local regions and (ii) complexity of the subgradient set around the set of optima -- measured by its mean width in a local region around optima -- plays a role in the complexity of nonsmooth optimization, particularly in parallel optimization settings. 
A consequence of (ii) is that for any error parameter $\epsilon > 0$, parallel oracle complexity of nonsmooth Lipschitz convex optimization is lower than its sequential oracle complexity by a factor $\tilde{\Omega}\big(\frac{1}{\epsilon}\big)$ whenever the objective function is piecewise-linear with the number of pieces  polynomial in the dimension and $1/\epsilon.$ This is particularly surprising considering that existing parallel complexity lower bounds are based on such classes of functions. The seeming contradiction is resolved by considering the region in which  the algorithm is allowed to query the objective. %
\end{abstract}

\section{Introduction}

Nonsmooth optimization problems pose some of the most intricate challenges within the realm of continuous optimization. 
As a result, they have been intensively studied from the algorithm design %
perspective since at least the 1960s \cite{bagirov2014introduction,hosseini2019nonsmooth,lemarechal1978nonsmooth}. The study of nonsmooth optimization concerns solving minimization problems of the form
\begin{equation}\label{eq:prob}\tag{P}
    \min_{\vx \in \cx}f(\vx),
\end{equation}
where $f$ is a (typically Lipschitz) continuous function (or satisfies other structural properties) and is not necessarily everywhere differentiable. Within this work, we are concerned with functions that are \emph{locally} Lipschitz-continuous, in the sense that they have bounded Lipschitz constants on compact sets, but, importantly, \markupadd{we do not impose global upper bounds on those Lipschitz constants.} \markupdelete{we make no assumptions about the values of those Lipschitz constants.} We further focus on standard settings where $\cx$ is closed, convex, nonempty, and admits efficiently computable projections.  

It was noted very early on that even though the objectives in such nonsmooth optimization problems are continuous (and, as such, differentiable almost everywhere, as a consequence of the classical Rademacher's theorem \cite{Rademacher:1919}), traditional methods developed for smooth optimization generally fail to converge when applied to them as a black box. It was formally established in the subsequent literature that in terms of oracle-based worst-case complexity, (Lipschitz-continuous) nonsmooth optimization problems are more challenging  than smooth optimization problems, both in the settings of convex~\cite{nemirovski:1983} and nonconvex  \cite{kornowski2022oracle} optimization, unless additional assumptions about the structure \cite{carmon2021thinking,nesterov2005smooth} and/or oracle access to $f$ \cite{Guler:1991,Guler:1992,carmon2021thinking,moreau1965proximite,martinet1970regularisation,rockafellar1976monotone} %
are made and crucially used in the algorithm design and analysis.  

Despite the computational barriers preventing  algorithmic speedups of nonsmooth optimization in the worst case, common nonsmooth optimization problems are often shown to be solvable with faster converging algorithms (even without access to stronger oracles such as the proximal oracle), sometimes even exhibiting linear (i.e., with geometrically reducing error) convergence \emph{locally} and/or \emph{in practice} \cite{davis2022nearly,han2023survey}. This large gap between the worst-case lower bounds and empirical performance on common instances prompts the question: 
\begin{center}
 \em   What type of structure makes certain nonsmooth optimization problems easier than others, and what kind of algorithms effectively exploit such structural properties?%
\end{center}

 \subsection{Motivation \& Intuition}

 \begin{figure}[t]
    \centering
    \subfigure[``farther from smooth'']{\includegraphics[width=.3\textwidth]{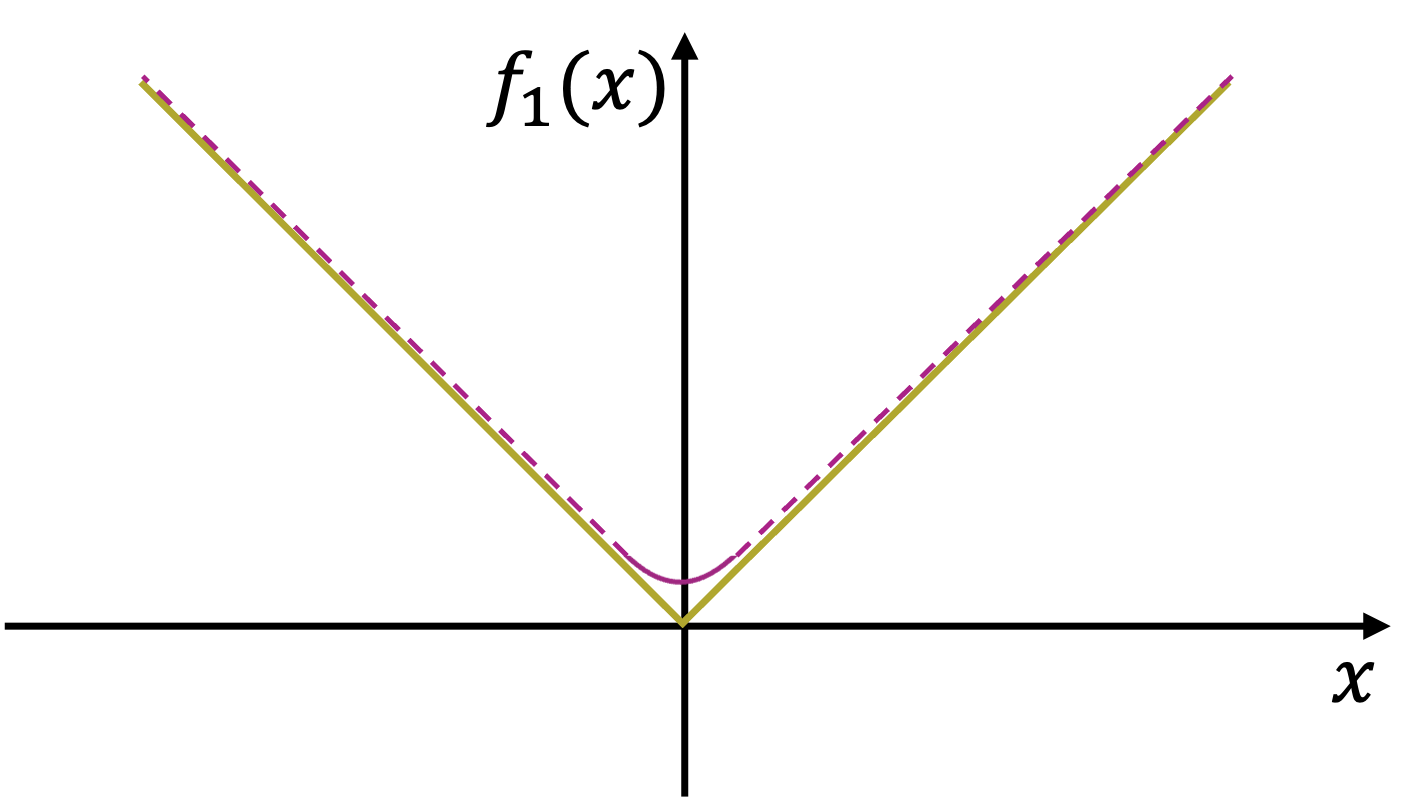}}\hspace{.55in}
    \subfigure[``closer to smooth'']{\includegraphics[width=.3\textwidth]{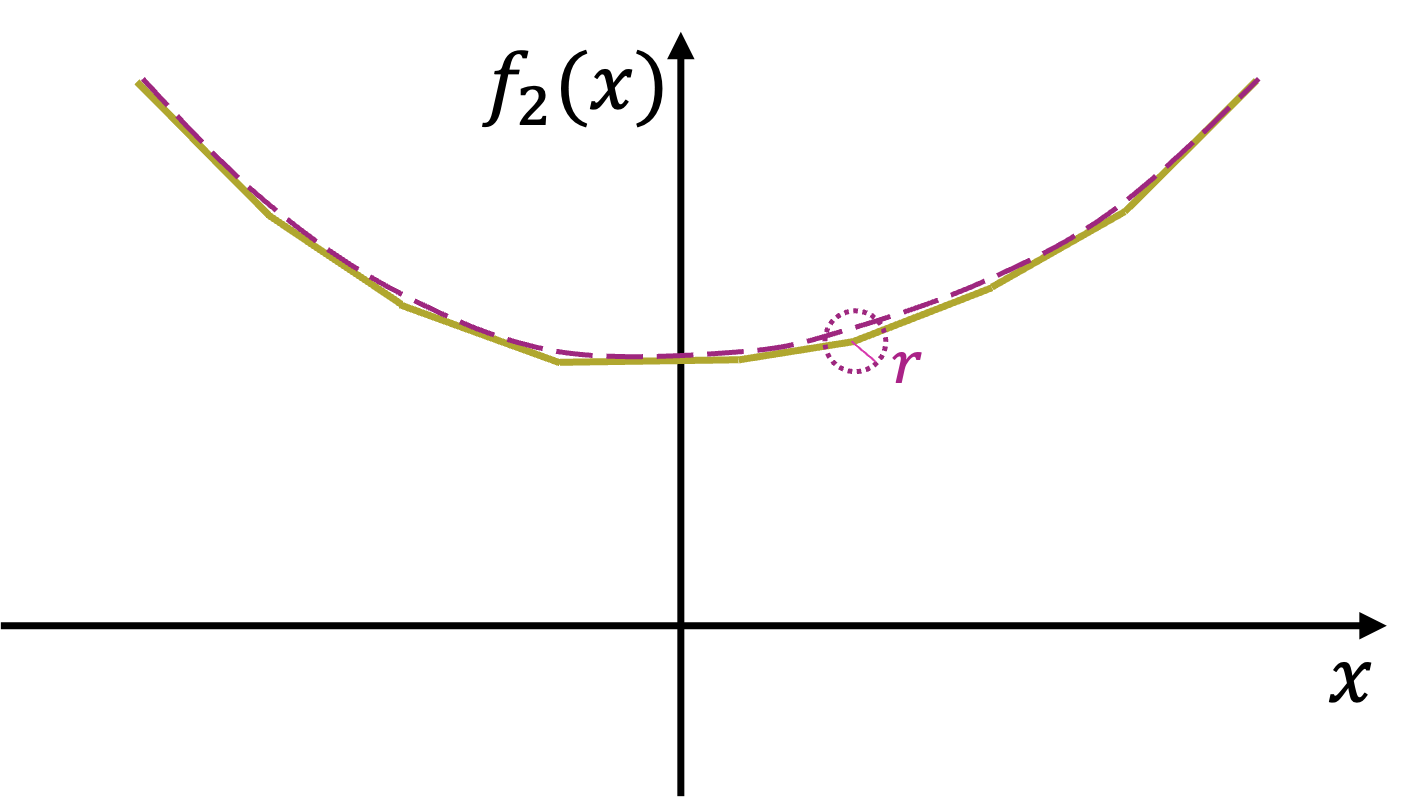}}
    \caption{Two nonsmooth, piecewise-linear, functions with the same Lipschitz constant. If both functions are averaged over a radius $r$ shown on the right plot, the right function remains much closer to its averaged, smoothed, variant indicated by the dashed line. Further, on any local neighborhood of radius $r$, the right function exhibits much milder changes in the slope than the left function does on its (only) kink.}
    \label{fig:GradBMO-comparison}
\end{figure}

Our starting point for the research presented in this work came from observations illustrated in Figure~\ref{fig:GradBMO-comparison}, which depicts two nonsmooth, Lipschitz-continuous,  piecewise-linear functions with the same Lipschitz constant. Visually, it is apparent that the right function is closer to a smooth function than the left function, in the sense that averaging both functions over small intervals around each point to obtain their smoothed counterparts, the right function remains much closer to its smoothing than the left one, in terms of the maximum deviation. Thus, intuitively, the right function should be easier to optimize, as the oracle complexity of smooth optimization is lower than the oracle complexity of nonsmooth optimization. However, this is not captured by  existing results.\footnote{Note, for example, that the most common interpolation between smooth and nonsmooth convex functions, given by H\"older continuity of gradients with exponent $\kappa\in(0,1]$ \cite{Nemirovski:1985,Guzman:2015}, is in both functions from Figure \ref{fig:GradBMO-comparison} infinite, for any exponent $\kappa>0$. Hence, this interpolation does not provide a satisfactory quantification of the complexity.} 

Taking a closer look at the examples in Figure~\ref{fig:GradBMO-comparison}, we can notice that the property that differentiates these two functions and makes one closer to its smooth approximation than the other is how much the slope of the function varies across small regions. This basic observation is the main motivation for introducing the notions of local subgradient variation -- corresponding to variation on \emph{average} and in  the worst-case, \emph{maximum} sense -- and studying oracle complexity of nonsmooth optimization under these notions of subgradient variation.  

Our work is importantly motivated by the limitations encountered in the extensively studied technique of randomized smoothing (see e.g., \cite{duchi2012randomized,nesterov2017random}), that uses convolution with a (uniform over a ball or Gaussian) kernel to reduce a nonsmooth (deterministic) optimization problem to a stochastic smooth optimization one, or even a problem only requiring access to a stochastic zeroth-order oracle. It is  well-known  that in this setting \markupadd{the} oracle complexity necessarily scales polynomially with the dimension~\cite{duchi2012randomized,kornowski2022oracle,diakonikolas2020lower,nemirovski1994parallel}, 
which severely limits its applicability in high-dimensional problems. As another notion of fine-grained complexity, we demonstrate in this work that it is the complexity of the subgradient around a function minimizer $\vx_*$ (assumed to exist in this work) that determines whether this dependence on the dimension can be improved. In particular, for a ``simple'' subgradient set around $\vx_*$, the dependence on the dimension $d$ can be brought down to $(\ln(d))^{1/4}$ or even a constant. This is further discussed in the next subsection and in Section~\ref{sec:randomized-smoothing}. 

Before moving onto describing our main results, we highlight the following properties of the functions that have bounded local variation of the subgradient (a more precise discussion is provided in Section~\ref{sec:bounded-local-var}, where these notions are formally introduced). First, every Lipschitz function has bounded local variation of the subgradient (under any of the considered average/maximum criteria), with the constant of subgradient variation being larger than the Lipschitz constant by at most a factor of 2, but possibly being arbitrarily smaller. In particular, the converse to this statement is not true: there are functions that have bounded local subgradient variation but are not Lipschitz-continuous (see Example \ref{ex:nonsmooth-quadratic-growth} in Section~\ref{sec:examples}). Thus, the class of functions with bounded local subgradient variation strictly contains the class of Lipschitz-continuous functions. Second, bounded local subgradient variation does not preclude superlinear (including quadratic) growth of a function, which clearly is not true for functions that are only Lipschitz continuous. %
Finally, we note that while the notion of subgradient variation is not new and it has been explored at least in the \emph{maximum} and \emph{global} sense \cite{Nesterov:2005minimizing,nesterov2015universal,Devolder:2014}, the key insight of our work is that it suffices for such a property to hold only in a \emph{local} sense, between points sufficiently close to each other.  

\subsection{Main Results}

 Our main findings are summarized as follows. 

\paragraph{Local subgradient variation as a measure of complexity} 
The main contribution of this work is 
initiating the study of new classes of objectives in the context of (convex and nonconvex) nonsmooth and (weakly) smooth optimization.  These classes of objectives are called {\em bounded maximum local variation of subgradients} (or \gradBMV, for short) and {\em bounded mean local oscillation of subgradients} (or  \gradBMO, for short), and are introduced in Section  \ref{sec:bounded-local-var}. \gradBMO\ is the weaker of these two properties, in the sense that it is implied by \gradBMV, while the converse does not hold in general. We provide different characterizations of structural properties of functions under these two notions of local subgradient variation and systematically investigate the complexity of classes of \gradBMV\ and \gradBMO\ problems, with the focus on demonstrating that these weak regularity properties suffice for obtaining a more fine-grained characterization of oracle complexity. Notably, because \gradBMV\ and \gradBMO\ functions are not necessarily (globally) Lipschitz continuous (but all Lipschitz-continuous functions are both \gradBMV\ and \gradBMO\ with a constant at most 2 times larger and possibly much smaller), on a conceptual level, our results demonstrate that weaker properties than Lipschitz continuity suffice for tractability of optimization. 

\paragraph{Deterministic convex optimization under \gradBMV} We begin our discussion of oracle upper bounds 
by treating \gradBMV\ functions as approximately smooth functions. While this idea is not new and was used in \cite{Devolder:2014,nesterov2015universal} as a means of handling weakly smooth functions in a universal manner, the prior work has only considered the settings in which such properties hold in a \emph{global} sense, between any pair of points. By contrast, we demonstrate that only a \emph{local} such property, assumed to hold only between points at distance at most $r > 0$, suffices. As a result, we obtain results that are similar to those in \cite{Devolder:2014,nesterov2015universal}, but with a parameter %
that is potentially much lower than the Lipschitz/weak smoothness constants from prior work, 
thus providing a more fine-grained characterization of oracle complexity. 

\paragraph{Randomized and possibly parallel convex optimization under \gradBMV\ and \gradBMO} As mentioned before, one of our main initial goals in this work was to understand and possibly remove the computational barriers of randomized smoothing, which introduces polynomial dependence on the dimension in the oracle complexity bound. If one further considers parallel settings, where $\mathrm{poly}(d, 1/\epsilon)$ queries may be asked in parallel per round of computation, then the term with the polynomial dependence on the dimension dominates the complexity (measured as the number of sequential rounds of queries), and this polynomial dependence is unavoidable in the worst  case~\cite{diakonikolas2020lower,balkanski2019exponential,bubeck2019complexity,woodworth2018graph,nemirovski1994parallel}. As another measure of fine-grained complexity, we show that this worst-case polynomial dependence on the dimension is determined by the complexity of the subdifferential set around a minimizer $\vx_* \in \argmin_\vx f(\vx).$ In particular, let $\partial f_r(\vx_*)$ be the convex hull of the subgradients in the Euclidean ball of radius $r = O(\epsilon \sqrt{d})$, centered at $\vx_*.$ The diameter of this set is determined by the \gradBMV\ constant, denoted by $\Llocr.$ Let $K_{\vx_*}$ be a polytope of diameter $O(\Llocr)$ that contains $\partial f_r(\vx_*)$ and has the smallest number of vertices. Then, the dimension-dependent term in the oracle complexity can be bounded by $O([\ln(\ext(K_{\vx_*}))]^{1/4}),$ where $\ext(K_{\vx_*})$ denotes the number of vertices of $K_{\vx_*}.$ As a result, when $\ext(K_{\vx_*}) = \mathrm{poly}(d),$\footnote{\markupadd{An example where the number of vertices of the subdifferential is moderate is the structured convex program $\min\{f(\vx)=\max_{\vy\in \Delta_{K}} \vy^{\top}(\mA\vx-\vb): \vx \in {\cal X}\}$, studied in the seminal works \cite{Nesterov:2005minimizing,nemirovski2004prox}. Here, ${\cal X}\subseteq \rr^d$, and $\Delta_K$ is the standard simplex in dimension $K$, where $K$ is scales polynomially with $d$. While our upper bounds would not match the results of these works, their algorithms heavily leverage the structure of the problem, whereas our results apply using only a black-box subgradient oracle access, with no explicit knowledge of the problem structure.}} the dependence on the dimension can be brought down to at most $O((\ln(d))^{1/4}),$ which is nearly dimension-independent.\footnote{For example, when $d = 10^{12},$ $(\ln(d))^{1/4}\approx 2$.} Note that this result does not contradict prior lower bounds for parallel convex optimization \cite{diakonikolas2020lower,balkanski2019exponential,bubeck2019complexity,woodworth2018graph}, as is explained in Section~\ref{sec:randomized-smoothing}.%

\paragraph{Nonconvex optimization under \gradBMV} Our final set of results concerns nonconvex nonsmooth optimization. We build on recent results for Goldstein's method \cite{davis2022gradient,zhang2020complexity} and show that the local Lipschitz constant used in the past work can be replaced by the generally much smaller local constant of \gradBMV. In obtaining this result, we generalize and further simplify the analysis from \cite{davis2022gradient}.  %

\subsection{Related Work}

The complexity of continuous optimization is an actively investigated problem since the 1970s \cite{nemirovski:1983}. One of the main achievements of this theory is the precise quantification of the minimax optimal rates of convergence for smooth and nonsmooth convex optimization. However, as  argued earlier, this coarse parameterization based on the maximum Lipschitz constant (either for the objective or its gradient) misses much of the information that determines the difficulty of performing optimization. The goal of our work is to bridge this gap. Below we review various threads of research that are related to our work. %

\paragraph{Smoothing approaches} Both randomized and deterministic smoothing approaches have been widely used in nonlinear optimization for a long time and in different contexts \cite{stekloff1907expressions,beck2012smoothing,norkin2020stochastic,norkin2023constrained,yousefian2010convex,lakshmanan2008decentralized,duchi2012randomized,bubeck2019complexity,flaxman2005online,nemirovski:1983,nesterov2005smooth,nesterov2017random}. The idea of smoothing is a natural one: approximate a nonsmooth function by a smooth one and then apply methods for smooth minimization to the smoothed function. Some of the most basic examples are the use of Moreau envelope for deterministic methods (see, e.g., \cite{moreau1965proximite,rockafellar1976monotone}), which requires access to a proximal point oracle, and local randomized smoothing using a Gaussian kernel or uniform distribution on a ball or a sphere \cite{stekloff1907expressions,nemirovski:1983,flaxman2005online,nesterov2017random,duchi2012randomized} (all these kernels lead to very similar results, due to concentration properties of these distributions in high dimensions; see, e.g., \cite[Chapter 2]{blum2020foundations}).  
We note that the goal of our work is not to devise new smoothing approaches (in fact, we rely on a very simple randomized smoothing over a Euclidean ball), but to demonstrate usefulness of the introduced \gradBMO\ concept in proving oracle complexity upper bounds.  %

\paragraph{Local Lipschitzness and Relaxations} Much of the recent literature on nonsmooth optimization (e.g., \cite{davis2022nearly,davis2022gradient,hazan2015beyond}) replaces the global Lipschitz condition by a local one, which posits Lipschitz continuity on compact sets and implies bounded subgradients on those sets. This is based on the insight that many optimization algorithms ensure that their iterates remain on a compact set around optima, thus how a function behaves outside this set is irrelevant for optimization. An alternative definition is that of  Lipschitz continuity in local neighborhoods of points  \cite[Chapter 1]{cui2021modern}; however, this definition seems to have been primarily used to define and study generalized notions of derivatives rather than oracle complexity of optimization. 

We further highlight the following works which addressed nonsmooth problems that are not necessarily Lipschitz-continuous. In \cite{grimmer2019convergence},  a variant of projected subgradient method with normalized subgradients was analyzed, motivated by insights from \cite{shor2012minimization}. This work shows that the rate of convergence $1/\sqrt{k}$ can be established assuming that there is a nondecreasing nonnegative mapping $\cd: \rr_+ \to \rr_+ \cup \{+\infty\}$ such that $f(\vx) - f(\vx_*) \leq \cd(\|\vx- \vx_*\|_2)$ where $\vx_*$ is a fixed minimizer of $f$ and the complexity results are expressed in terms of this mapping. Such an assumption removes the requirement for regularity such as Lipschitz continuity to hold on a compact set between \emph{any} pair of points, but still requires at least some bounded growth condition to hold on the entire feasible set or on a sufficiently large ball around $\vx_*$. Another line of work \cite{renegar2016efficient,grimmer2018radial} develops a generic transformation from nonsmooth non-Lipschitz convex problems to convex Lipschitz problems and an algorithmic framework to address them at the cost of a simple line search (but removing projections in constrained settings) and with an error guarantee of the form $\frac{f(\vx) - f(\vx_*)}{f(\vx_0) + c - f(\vx_*)} \leq \epsilon,$ where $c > 0$ is a parameter of the algorithm. This corresponds to a multiplicative error guarantee for the shifted function $f(\cdot) - f(\vx_0) - c,$ but on the original problem the error is additive and equal to $(f(\vx_0) - f(\vx_*) + c)\epsilon.$ The resulting complexity bound replaces the usual dependence on the Lipschitz constant in traditional oracle complexity bounds by the inverse of a parameter defined by $R = \sup\{r \in \rr: f(\vx) \leq f(\vx_0) + c \text{ for all } \vx \text{ with } \|\vx - \vx_0\|_2 \leq r\}$. Given a fixed $c,$ for Lipschitz functions this constant is bounded by $c/M$, where $M$ is the Lipschitz constant of $f$ on the sublevel set $\{\vx: f(\vx) \leq f(\vx_0) + c\}.$ More generally, for the result to be informative, one needs to bound the growth of $f$ on a sufficiently large neighborhood of $\vx_0$ intersected with the sublevel set $\{\vx: f(\vx) \leq f(\vx_0) + c\}.$ In summary, all the results using notions of local Lipschitzness (and related concepts) that we are aware of require bounding the \emph{growth} of the function in a possibly small region. By contrast, our results rely on bounding the \emph{subgradient variation} in local regions. 

\paragraph{Stronger oracles} Improved complexity results for nonsmooth optimization are possible if access to additional oracles or structure of the problem is accessible to the algorithm. For example, there is vast literature on methods utilizing the proximal point oracle (see, e.g., \cite{rockafellar1976monotone,Guler:1991,Guler:1992,chambolle2011first}), requiring oracle access to minima of problems of the form $f(\vx) + \frac{1}{2\tau}\|\vx - \vx_0\|_2^2$ for any $\vx_0 \in \rr^d$ and $\tau >0.$ Another example in the recent literature is the ball optimization oracle \cite{carmon2020acceleration}, which gives the algorithm access to solutions of the problem $\min_{\vx: \|\vx - \vx_0\|_2 \leq r}f(\vx)$ for any $\vx_0 \in \rr^d$ and some $r > 0.$ Further, there are multiple results assuming that the objective function can be expressed as a structured maximization problem (e.g., arising from the convex conjugate of a function $f$ composed with a linear map $\mA \vx$ \cite{nesterov2005smooth,nemirovski2004prox,chambolle2011first} or the objective simply being a maximum of $N$ smooth or nonsmooth functions \cite{carmon2021thinking}) and where one is given oracle access to components of the said maximization problem (such as proximal point oracle access to the convex conjugate of $F$ in the case of $f(\vx) = F(\mA \vx)$ or first-order oracle access to component functions in the case where $f(\vx) = \max_{1 \leq i \leq n}f_i(\vx)$). Finally, it is possible to relax Lipschitz continuity by ``relative continuity'' -- where $\|\nabla f(\vx)\|_* \leq \frac{\sqrt{2M D_h(\vy, \vx)}}{\|\vy - \vx\|}$ for a pair of dual norms $\|\cdot\|, \|\cdot\|_*,$ a positive constant $M$, and Bregman divergence $D_h$ w.r.t.\ a reference function $h$ -- and recover the complexity results of nonsmooth Lipschitz convex optimization \cite{lu2019relative}. However, this approach requires oracle access to minimizers of $D_h(\vx, \vx_0) + \innp{\vz, \vx}$ for arbitrary but fixed $\vx_0, \vz$ (mirror descent steps), and, moreover, there are few examples of functions that satisfy such a relative continuity property. Our work does not require any  specialized oracles but only relies on the standard first-order oracle. %

\paragraph{Optimization algorithms} Finally, our work leverages existing algorithms and techniques in convex optimization \cite{cohen2018acceleration,gasnikov2018universal,nesterov2015universal} -- where we utilize universal accelerated algorithms, and nonconvex optimization \cite{zhang2020complexity,davis2022gradient} -- where we rely on a randomized version of Goldstein's method. 
For the latter, we note that 
implementations of first-order oracles based on the Goldstein subdifferential seemingly require randomization 
\cite{zhang2020complexity,kornowski2022oracle}, and some lower bounds support this idea \cite{jordan2023deterministic}. Recently, \cite{Kong:2023} has shown that for more particular   subclasses of nonconvex nonsmooth optimization problems, randomization can be avoided entirely. We note here that our focus is \emph{not} on algorithm design but on novel characterizations of oracle complexity in optimization. In particular, we demonstrate that existing algorithms can be analyzed for the complexity classes based on bounded local subgradient variation that we introduce and oracle complexity results that are both more general and more fine-grained can be obtained.

\section{Preliminaries}

Our primary focus is on the Euclidean space $(\rr^d, \|\cdot\|_2)$; however, much of the discussion extends to other normed spaces. We use $\cb := \{\vx \in \rr^d: \|\vx\|_2 \leq 1\}$ to denote the centered unit Euclidean ball and $\cb_r(\bar\vx) := \{\vx \in \rr^d: \|\vx-\bar\vx\|_2 \leq r\}$ to denote the Euclidean ball of radius $r$ centered at $\bar\vx$. When $\bar\vx=0$, we use the notation $\cb_r:=\cb_r(0)$. We use $\cs := \{\vx \in \rr^d: \|\vx\|_2 = 1\}$ to denote the centered unit Euclidean sphere. 

We say that a function $f: \rr^d \to \rr$ is $M$-Lipschitz continuous for some constant $M < \infty,$ if for any $\vx, \vy \in \rr^d,$
\begin{equation}\label{eq:M-Lip}
    |f(\vx) - f(\vy)| \leq M \|\vx - \vy\|_2. 
\end{equation}
If $f$ is additionally convex, then it is subdifferentiable on its whole domain; in particular $\partial f(\vx)\neq \emptyset$, for all $\vx\in\rr^d$. In such a case, we will denote for convenience by $\grad(\vx)$ an arbitrary (measurable w.r.t.~$\vx$) selection from $\partial f(\vx)$.  A similar conclusion holds without convexity (only under Lipschitzness), with the observation that here $f$ would be differentiable almost everywhere, thus a measurable selection would exist as well.

We say that a function $f:\rr^d\to\rr$ is locally Lipschitz if for every $r>0$, $f$ is Lipschitz over ${\cal B}_r$. Crucially, we make no assumptions about the Lipschitz constant on these balls: this is important as we would like to handle the case where the objective may not be globally Lipschitz with a uniform constant (e.g., a quadratic function). The main property of local Lipschitzness we need for our arguments is the almost sure differentiability of these functions, and the fundamental theorem of calculus. Both of them are stated below, for completeness.

\begin{theorem}[Rademacher \cite{Rademacher:1919}]
If $f:\rr^d\to\rr$ is locally Lipschitz then it is differentiable almost everywhere.
\end{theorem}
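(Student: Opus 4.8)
The plan is a four-step argument following the classical route: reduce to the globally Lipschitz case, obtain directional derivatives almost everywhere from one-dimensional considerations, identify them with the gradient via a weak-derivative computation, and finally upgrade pointwise existence of all directional derivatives to genuine differentiability by a uniform estimate over the sphere.

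First I would reduce to a function that is $L$-Lipschitz on all of $\rr^d$. Indeed, $\rr^d = \bigcup_{n \in \nn} \cb_n$, and for each $n$ the restriction $f|_{\cb_{n+1}}$ extends (e.g.\ by McShane's Lipschitz extension) to some $g_n \colon \rr^d \to \rr$ that is Lipschitz on $\rr^d$ and agrees with $f$ on $\cb_{n+1}$; differentiability of $g_n$ almost everywhere then yields differentiability of $f$ almost everywhere on the interior of $\cb_{n+1}$, and the countable union over $n$ gives the claim for $f$. So assume now $f$ is $L$-Lipschitz on $\rr^d$. Next, fix a unit vector $v \in \cs$. For each $\vx$, the map $t \mapsto f(\vx + t v)$ is $L$-Lipschitz on $\rr$, hence absolutely continuous, hence differentiable at almost every $t$ by the classical one-dimensional result; Fubini's theorem applied to the foliation of $\rr^d$ by lines in direction $v$ then shows that the directional derivative $D_v f(\vx) := \lim_{t\to 0}\frac{f(\vx + tv) - f(\vx)}{t}$ exists for almost every $\vx$. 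Specializing to $v = \ve_1,\dots,\ve_d$, the gradient $\nabla f(\vx) = (\partial_1 f(\vx),\dots,\partial_d f(\vx))$ exists on a full-measure set $E_0$, with $\norm{\nabla f(\vx)}_2 \le L$ there.

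The crux is to show that for each fixed $v \in \cs$ one in fact has $D_v f(\vx) = \innp{\nabla f(\vx),\, v}$ for almost every $\vx$. I would test against an arbitrary $\varphi \in C_c^\infty(\rr^d)$: since the difference quotients $\frac{f(\vx + tv) - f(\vx)}{t}$ are bounded in magnitude by $L$ and converge pointwise almost everywhere to $D_v f(\vx)$, dominated convergence together with a change of variables gives
\[
\int_{\rr^d} D_v f(\vx)\, \varphi(\vx)\, \dd\vx \;=\; -\int_{\rr^d} f(\vx)\, \innp{\nabla \varphi(\vx),\, v}\, \dd\vx ,
\]
and integrating by parts coordinatewise on the right — justified by Fubini and the one-dimensional fundamental theorem of calculus, using that $\varphi$ has compact support and $\partial_i f$ exists almost everywhere — turns the right-hand side into $\int_{\rr^d} \innp{\nabla f(\vx),\, v}\, \varphi(\vx)\, \dd\vx$. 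As $\varphi$ is arbitrary, $D_v f = \innp{\nabla f,\, v}$ almost everywhere. Making this integration by parts rigorous for a merely Lipschitz integrand is the step I expect to be the main obstacle.

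Finally, I would pick a countable dense set $\{v_k\}_{k \in \nn} \subseteq \cs$ and set $N := E_0^c \cup \bigcup_k N_k$, where $N_k$ is the null set off which $D_{v_k} f(\vx) = \innp{\nabla f(\vx),\, v_k}$; then $N$ is null. Fix $\vx \notin N$ and define $Q_t(v) := \frac{f(\vx + tv) - f(\vx)}{t} - \innp{\nabla f(\vx),\, v}$ for $v \in \cs$. Using $L$-Lipschitzness of $f$ and $\norm{\nabla f(\vx)}_2 \le L$, one gets $|Q_t(v) - Q_t(v')| \le 2L\,\norm{v - v'}_2$, uniformly in $t$. Given $\epsilon > 0$, choose finitely many of the $v_k$ that are $\frac{\epsilon}{4L}$-dense in $\cs$ and then $\delta > 0$ small enough that $|Q_t(v_k)| < \frac{\epsilon}{2}$ for each of those finitely many $v_k$ and all $0 < |t| < \delta$; for an arbitrary $v \in \cs$, choosing the nearest such $v_k$ yields $|Q_t(v)| < \epsilon$. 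Hence $\sup_{v \in \cs} |Q_t(v)| \to 0$ as $t \to 0$, which is precisely the assertion that $f$ is differentiable at $\vx$ with derivative $\nabla f(\vx)$. Combined with the reduction of the first step, this completes the proof.
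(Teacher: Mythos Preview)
Your outline is the standard classical proof of Rademacher's theorem and is correct; the one step you flag as the main obstacle --- integrating by parts against a Lipschitz integrand --- is exactly what is justified by the one-dimensional fundamental theorem of calculus for absolutely continuous functions together with Fubini, as you indicate. Note, however, that the paper does not prove this statement at all: it is quoted in the preliminaries as a classical result with a citation to \cite{Rademacher:1919}, so there is no ``paper's own proof'' to compare against. Your write-up is thus a self-contained proof of a result the paper merely invokes.
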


\begin{theorem}[Fundamental Theorem of Calculus (FTC)]\label{thm:FTC}
If $f:\rr^d\to \rr$ is locally Lipschitz, then for all $\va,\vb\in\rr^d$
\[ f(\vb)-f(\va)=\int_0^1 (\grad)_{\va,\vb}((1-t)\va+t\vb) \dd t, \]
where $(\grad)_{\va,\vb}$ is a measurable selection of the directional derivative $f^{\prime}(\cdot;\vb-\va)$. Moreover, if either $\va$ or $\vb$ are chosen generically,\footnote{\markupadd{Here and onwards, generically means for ``all points except for a Lebesgue-negligible set.''}}
\[ f(\vb)-f(\va)=\int_0^1 \langle \grad((1-t)\va+t\vb),\vb-\va\rangle \dd t. \] 
\end{theorem}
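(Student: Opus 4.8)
The plan is to reduce the statement to the one-dimensional fundamental theorem of calculus by restricting $f$ to the segment joining $\va$ and $\vb$. Define $\phi:[0,1]\to\rr$ by $\phi(t)=f((1-t)\va+t\vb)$. Since the segment $\{(1-t)\va+t\vb:t\in[0,1]\}$ is compact, it is contained in some ball $\cb_R$, and local Lipschitzness of $f$ supplies a finite constant $L=L_R$ with $|\phi(t)-\phi(s)|\le L\|\vb-\va\|_2\,|t-s|$; hence $\phi$ is Lipschitz, therefore absolutely continuous on $[0,1]$. The fundamental theorem of calculus for absolutely continuous functions then gives $f(\vb)-f(\va)=\phi(1)-\phi(0)=\int_0^1\phi'(t)\,\dd t$, where $\phi'(t)$ exists for a.e.\ $t$ and satisfies $|\phi'(t)|\le L\|\vb-\va\|_2$, so the integral is well defined. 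Wherever $\phi'(t)$ exists, taking the one-sided limit shows it equals the directional derivative $f'((1-t)\va+t\vb;\vb-\va)$ (a scalar); on the remaining Lebesgue-null subset of $[0,1]$ we let $(\grad)_{\va,\vb}$ take the value of any fixed measurable selection of $f'(\cdot;\vb-\va)$. Measurability of $t\mapsto\phi'(t)$ where defined follows since it is a pointwise limit of the continuous difference quotients. This establishes the first identity.

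\textbf{Upgrading to an inner product.} For the second identity we additionally invoke Rademacher's theorem together with a Fubini argument. Let $N\subseteq\rr^d$ be a Borel set of Lebesgue measure zero containing every point at which $f$ fails to be differentiable (such $N$ exists by Rademacher's theorem and outer regularity). Fix $\vb$. For each fixed $t\in[0,1)$ the affine map $\va\mapsto(1-t)\va+t\vb$ has Jacobian determinant $(1-t)^d\neq0$, so $\{\va:(1-t)\va+t\vb\in N\}$ has measure zero. Applying Fubini's theorem to the Borel set $\{(\va,t):(1-t)\va+t\vb\in N\}\subseteq\rr^d\times[0,1]$, we conclude that for Lebesgue-a.e.\ $\va$ the slice $\{t\in[0,1]:(1-t)\va+t\vb\in N\}$ has one-dimensional measure zero; by the symmetric parametrization $s=1-t$ the same holds for generic $\vb$. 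For such a generic endpoint, at a.e.\ $t$ the point $(1-t)\va+t\vb$ lies outside $N$, so $f$ is differentiable there and the chain rule along the line gives $\phi'(t)=\innp{\grad((1-t)\va+t\vb),\vb-\va}$. Substituting into the first identity yields the claim.

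\textbf{Main obstacle.} The only genuinely delicate point is the second identity: the segment from $\va$ to $\vb$ is a Lebesgue-null subset of $\rr^d$, so Rademacher's theorem by itself says nothing about differentiability of $f$ along it, and for special pairs $\va,\vb$ the restriction $\phi$ can fail to be differentiable on a dense (but measure-zero) set without this affecting the first identity. The Fubini argument above is precisely what circumvents this, at the cost of the genericity hypothesis. A few routine points should be checked in passing: that $N$ can be taken Borel so that Fubini applies; that the difference quotients defining $\phi'$ are jointly measurable, so the a.e.-defined limit is measurable; and that the integrand in the first identity is dominated by the Lipschitz constant on the enclosing ball, guaranteeing integrability. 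None of these requires more than standard real-analysis facts.
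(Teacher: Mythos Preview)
The paper does not actually prove this theorem; it is stated in the Preliminaries section as a known background result (alongside Rademacher's theorem), with no proof supplied. Your proposal is correct and is the standard argument: restrict $f$ to the segment to get a one-variable Lipschitz (hence absolutely continuous) function $\phi$, apply the one-dimensional FTC, and identify $\phi'(t)$ with the directional derivative; for the second identity, combine Rademacher with a Fubini argument on $\{(\va,t):(1-t)\va+t\vb\in N\}$ to guarantee that, for a generic endpoint, almost every point on the segment is a point of differentiability. The technical caveats you flag (choosing $N$ Borel so Fubini applies, measurability of $\phi'$ as a pointwise limit of continuous difference quotients, and the uniform Lipschitz bound on the enclosing ball giving integrability) are all routine and correctly handled.
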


Finally, given $\delta>0$, we recall the definition of the Goldstein $\delta$-subdifferential of a (locally Lipschitz) function $f:\mathbb{R}^d\to\mathbb{R}$ at a point $\vx\in \mathbb{R}^d$, $\partial_{\delta}f(\vx)=\mbox{conv}\big(\{ \nabla f(\vy): \vy \in {\cal B}(\vx,r)\mbox{ and $f$ is differentiable at } \vy\}\big)$. We say that a point $\vx\in\mathbb{R}^d$ is $(\delta,\epsilon)$-stationary if $\mbox{dist}(\zeros, \partial_{\delta}f(\vx))\leq \epsilon$.

\section{Bounded Local Variation of the Subgradient}\label{sec:bounded-local-var}

In this section, we provide local regularity assumptions for the subgradient that define the complexity class that we study in this work. The two notions are: (i) bounded maximum local variation and (ii) bounded mean oscillation. We later argue that these two (local) assumptions are sufficient for obtaining upper complexity bounds. Both properties are defined for a fixed radius $r.$ %
We later discuss how this radius can be chosen or estimated. 
The motivation for the considered notions of local variability is illustrated in examples provided in Figure~\ref{fig:GradBMO-comparison}, as discussed in the introduction. %

\subsection{Bounded Maximum Local Variation}\label{sec:GradBMV}

Bounded maximum local variation requires that the subgradient of a function does not change much over small regions, although it is possible for each of the subgradients to have a large norm. Formally,
\begin{definition}[\gradBMV]\label{def:bounded-max-loc-var} 
    Given $r > 0,$ we say that a locally Lipschitz function $f: \rr^d \to \rr$ has bounded maximum local variation of the subgradient in norm $\|\cdot\|_2$ (is \gradBMV), if there exists a positive constant $\Llocr < \infty$ such that 
    \begin{equation}\notag
      (\forall\vx \in \rr^d)(\forall \vu \in \cb): \;\; \|\grad(\vx + r\vu) - \grad (\vx) \|_2 \leq \Llocr.
    \end{equation}
\end{definition}
Observe that if $f$ were (globally) $L$-Lipschitz continuous, then $\Llocr \leq 2 L.$ However, it is possible for $\Llocr$ to be much smaller than $L.$ For example, a univariate function $f$ equal to $\frac{1}{2}$ for $|x|<1$ and equal to $\frac{1}{2}x^2$ %
otherwise is clearly not Lipschitz continuous, but has bounded local variation of the (sub)gradient with $\Llocr = 1$ for $r < 2.$ (Observe that the ``local'' Lipschitz constant  as in \cite{cutkosky2023optimal,davis2022nearly} would be much larger in general, as the derivative of $f$ scales with $x$ for $|x|>1,$ which is bounded on bounded sets, but scales with the diameter of the set.) \markupadd{Further, this example function is not differentiable at $x = 1,$ thus it is clearly nonsmooth.}%

\subsection{Bounded Mean Oscillation and Smoothing}\label{sec:GradBMO}

Functions with Bounded Mean Oscillation (BMO functions) play an important role in harmonic analysis. They are formally defined as follows (see, e.g., \cite{john1961functions}).

\begin{definition}[Bounded Mean Oscillation]\label{def:BMO-functions}
    Let $f:\rr^d \to \rr$ be a function that is integrable on compact sets. 
    Then, $f$ is said to be a BMO function if
    \begin{equation}\label{eq:BMO-def}
        \|f\|_{\rm BMO} := \sup_{\vx \in \rr^d, r > 0} \frac{1}{\vol(\cB_r)}\int_{\cB_r}|f(\vx + \vu) - f_r(\vx)|\dd\vu < \infty,
    \end{equation}
    where $\vol$ denotes the volume and 
    \begin{equation}\label{eq:avg-fun-def}
        f_r(\vx) = \frac{1}{\vol(\cB_r)}\int_{\cB_r}f(\vx + \vu)\dd\vu.
    \end{equation}
    The (semi-)norm $\|\cdot\|_{\rm BMO}$ is referred to as the BMO norm.
\end{definition}

Note that under minimal assumptions (e.g., local Lipschitzness), $f_r$ is differentiable. 
The integral $\frac{1}{\vol(\cB_r)}\int_{\cB_r}|f(\vx + \vu) - f_r(\vx)|\dd\vu$ is known as the mean oscillation of $f$ over $\cbr$. 
The above definition is often stated for the unit ball w.r.t.\ $\|\cdot\| = \|\cdot\|_\infty,$ i.e., by defining BMO functions as functions with bounded oscillations $\frac{1}{\vol(\cB_r)}\int_{\cB_r}|f(\vx + \vu) - f_r(\vx)|\dd\vu$ over hypercubes. However, using the definition as ours is not uncommon in the literature and the definitions using different norms are all equivalent (though the value of the resulting BMO norms may differ) \cite{wiegerinck1997bmo,stein1993harmonic}. In this work, we focus on the Euclidean case, where $\|\cdot\| = \|\cdot\|_2.$ All bounded functions are BMO. 

\paragraph{Gradient BMO Functions}

The definition of BMO functions is not directly useful in our setting, for two reasons: (1) the bounded oscillation is defined with respect to the function value, whereas in our case it is the slope (or the subgradient)  whose changes with respect to small perturbations determine how close a function is to its smoothed approximation (see Figure~\ref{fig:GradBMO-comparison}); and (2) BMO is a global property of functions, whereas we are interested in small, local changes in the slope (or the subgradient). Accounting for these two issues, we introduce the following definition of (sub)gradient $r$-BMO functions.
\begin{definition}[\gradBMO]\label{def:BMO-grad-sphere}
    Let $f:\rr^d \to \rr$ be a locally Lipschitz function. %
    We say that $f$ is \gradBMO\ if there exist $r>0$ and $L_r < \infty$ such that 
    \begin{equation}\label{eq:r-BMO-grad-def}
        \|\nabla f\|_{{\rm BMO}, r} := \sup_{0 < \rho \leq r}\sup_{\vx \in \rr^d} \frac{1}{\vol(\cs_{\rho})}\int_{\cs_{\rho}}\|\grad(\vx + \vu) - \nabla f_r(\vx)\|_2\dd\vu \leq L_r,
    \end{equation}
    where $f_r$ is defined by \eqref{eq:avg-fun-def} and $\cs_{\rho} = \{\vx: \|\vx\|_2 = \rho\}$.  
    In this case, we also say that $f$ is \gradBMO\ with constant $L_r.$ 
    %
    %
    %
\end{definition}

\gradBMO\ property plays a role in local smoothing of a function. Intuitively, functions with lower \gradBMO\ constants (for the same $r$) are closer to their smooth approximations obtained using local smoothing (such as randomized smoothing over small balls used in this work). Our main insight is that this property, together with the \gradBMV\ from Definition~\ref{def:bounded-max-loc-var}, allows us to characterize complexity of nonsmooth optimization problem classes at a finer scale. For illustration, recall the two functions shown in Fig.~\ref{fig:GradBMO-comparison}. Both these functions are nonsmooth (in fact, both are piecewise-linear) with the same Lipschitz constant. However, the right function has a smaller \gradBMO\ constant $L_r$ for a sufficiently small radius indicated on the right subfigure. Even though both functions belong to the same class of nonsmooth Lipschitz functions, visually, the right functions is ``closer to being smooth,'' as the transitions between the linear pieces have less dramatic changes in the slope.  

\markupadd{For our results, we impose a bound on the oscillation at multiple scales, and that naturally introduces the supremum over $\rho$ in the range $(0,r]$. This requirement comes from the need to simultaneously bound $\frac{1}{\vol(\cs_{r})}\int_{\cs_{r}}\|\grad(\vx + \vu) - \nabla f_r(\vx)\|_2\dd\vu$ (in e.g., Lemma~\ref{lem:smoothness_BMO}) and $\frac{1}{\vol(\cb_{\rho})}\int_{\cb_{\rho}}\|\grad(\vx + \vu) - \nabla f_r(\vx)\|_2\dd\vu$ for $\rho \in (0, r]$ (in e.g., Lemma~\ref{lemma:generic-smooth-approx-err}). Both these quantities can be bounded by $\sup_{0 < \rho \leq r} \frac{1}{\vol(\cs_{\rho})}\int_{\cs_{\rho}}\|\grad(\vx + \vu) - \nabla f_r(\vx)\|_2\dd\vu$ as in \eqref{eq:r-BMO-grad-def}.}

\paragraph{Randomized Local Smoothing}

Observe that, given $r>0$, if we consider the uniform distribution $\unif(\cb_r)$ on the centered Euclidean ball of radius $r,$ $\cb_r = r\cb,$ then we can equivalently define the smoothed function $f_r$ from \eqref{eq:avg-fun-def} as 
\begin{equation}\label{eq:f_r-via-expectation}
\begin{aligned}
    f_r(\vx) &= \ee_{\vu \sim \unif(\cb_r)} [f(\vx + \vu)]
    = \ee_{\vu \sim \unif(\cb)} [f(\vx + r\vu)].
\end{aligned}
\end{equation}
This observation gives rise to the use of randomized smoothing, where we can obtain an unbiased estimate of the (sub)gradient of $f_r$ using one of the two following ideas. The first is simply using $\grad(\vx + \vu)$, where $\vu$ is drawn uniformly at random from $\cbr$: this results in an unbiased estimate by the dominated convergence theorem.  The second is $f(\vx + r\vu)\vu,$ where $\vu$ is drawn uniformly at random from the sphere of radius one. This is a valid unbiased estimate as a consequence of Stokes theorem, summarized in the lemma below. The proof of the lemma can be found in \cite[Chapter 9]{nemirovski:1983} and in \cite{flaxman2005online}, and is thus omitted for brevity. 

\begin{lemma}\label{lemma:stokes}
    Given $r > 0,$ 
    \begin{equation}\label{eq:stokes-estimate}
        \ee_{\vu \sim \unif(\cs)}[f(\vx + r\vu)\vu] = \frac{r}{d}\nabla f_r(\vx).
    \end{equation}
\end{lemma}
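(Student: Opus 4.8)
The plan is to derive \eqref{eq:stokes-estimate} from the divergence theorem (the relevant instance of Stokes' theorem) applied to the Euclidean ball $\cb_r(\vx)$, after first rewriting the smoothed function $f_r$ as an integral of $f$ over a moving domain. \emph{Step 1: differentiate the smoothing under the integral sign.} Starting from \eqref{eq:avg-fun-def} and substituting $\vz=\vx+\vu$, write $f_r(\vx)=\frac{1}{\vol(\cb_r)}\int_{\cb_r(\vx)}f(\vz)\,\dd\vz=\frac{1}{\vol(\cb_r)}\int_{\cb_r}f(\vx+\vw)\,\dd\vw$, so that $\vx$ enters only through a translation of the integrand. For a fixed $\vx$ and coordinate $i$, the difference quotients $\frac{f(\vx+h\ve_i+\vw)-f(\vx+\vw)}{h}$ are bounded, uniformly in $\vw\in\cb_r$ and small $h$, by the Lipschitz constant of $f$ on a slightly enlarged ball, and by Rademacher's theorem they converge to $\partial_i f(\vx+\vw)$ for almost every $\vw$; dominated convergence over the finite-measure set $\cb_r$ then gives $\nabla f_r(\vx)=\frac{1}{\vol(\cb_r)}\int_{\cb_r(\vx)}\nabla f(\vz)\,\dd\vz$.

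\emph{Step 2: apply the divergence theorem and normalize.} Fixing $i$ and applying the Gauss--Green theorem to the vector field $\vz\mapsto f(\vz)\,\ve_i$ on $\cb_r(\vx)$, whose outward unit normal at a boundary point $\vz$ is $(\vz-\vx)/r$, gives
\[
\int_{\cb_r(\vx)}\partial_i f(\vz)\,\dd\vz=\int_{\partial\cb_r(\vx)}f(\vz)\,\frac{z_i-x_i}{r}\,\dd S(\vz)=r^{d-1}\int_{\cs}f(\vx+r\vu)\,u_i\,\dd S(\vu),
\]
where the last equality parametrizes $\partial\cb_r(\vx)$ by $\vz=\vx+r\vu$, $\vu\in\cs$, so that $\dd S(\vz)=r^{d-1}\dd S(\vu)$ and $(z_i-x_i)/r=u_i$. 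Combining with Step 1, $(\nabla f_r(\vx))_i=\frac{r^{d-1}}{\vol(\cb_r)}\int_{\cs}f(\vx+r\vu)\,u_i\,\dd S(\vu)$; now using $\vol(\cb_r)=r^d\vol(\cb)$ together with the elementary identity $\vol(\cs)=d\,\vol(\cb)$ (equivalently, the surface area of $\cs_r$ equals $\tfrac{\dd}{\dd r}\vol(\cb_r)$), this becomes $\frac{d}{r}\cdot\frac{1}{\vol(\cs)}\int_{\cs}f(\vx+r\vu)\,u_i\,\dd S(\vu)=\frac{d}{r}\,\ee_{\vu\sim\unif(\cs)}[f(\vx+r\vu)\,u_i]$. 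Collecting coordinates and rearranging yields $\ee_{\vu\sim\unif(\cs)}[f(\vx+r\vu)\,\vu]=\frac{r}{d}\nabla f_r(\vx)$.

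The main obstacle is the low regularity of $f$: both the differentiation under the integral sign and the divergence theorem are applied to a function that is only locally Lipschitz, not $C^1$. Rademacher's theorem and local Lipschitzness supply the a.e.\ differentiability and the domination needed for the limit argument in Step 1, and the Gauss--Green theorem does remain valid for Lipschitz integrands on a ball; but if one wants a fully elementary justification, the cleanest route is mollification — prove the identity for the smooth approximations $f*\varphi_\varepsilon$ and let $\varepsilon\downarrow 0$, using that $f*\varphi_\varepsilon\to f$ uniformly on compacts with gradients bounded uniformly on compacts, so that both sides of the identity pass to the limit. Everything else is bookkeeping with the ball/sphere volume constants.
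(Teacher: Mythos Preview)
Your proof is correct and follows the standard divergence-theorem argument. The paper does not give its own proof of this lemma; it simply cites \cite[Chapter 9]{nemirovski:1983} and \cite{flaxman2005online} and omits the details, so your write-up is in fact more complete than what appears in the paper and matches the approach in those references.
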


We first show that for \gradBMO\ convex functions with small constant $L_r$, the smoothed function $f_r$ is close to the original function $f$, which aligns well with our intuition from Fig.~\ref{fig:GradBMO-comparison}. 

\begin{lemma}\label{lemma:generic-smooth-approx-err}
    Let $f:\rr^d \to \rr$ be a locally Lipschitz function that is \gradBMO\ with  constant $L_r$ and let $f_r$ be defined by \eqref{eq:f_r-via-expectation}. Then, for all $\vx \in \rr^d,$
    \begin{equation}\notag
        f_r(\vx) - f(\vx) \leq L_r r.
    \end{equation}
    Additionally, if $f$ is convex, then $f_r(\vx) - f(\vx) \geq 0,$ $\forall \vx \in \rr^d.$
\end{lemma}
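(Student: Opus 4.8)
The plan is to prove the two bounds separately, with the upper bound being the substantive part. For the upper bound $f_r(\vx) - f(\vx) \leq L_r r$, the idea is to write the difference as an average over the ball $\cb_r$ of one-dimensional increments along radial segments, and then to control each increment using the \gradBMO\ bound at the appropriate scale $\rho \in (0,r]$. Concretely, using \eqref{eq:f_r-via-expectation} and passing to polar coordinates, I would write
\begin{equation}\notag
    f_r(\vx) - f(\vx) = \ee_{\vu \sim \unif(\cb_r)}[f(\vx+\vu) - f(\vx)],
\end{equation}
and then for each fixed direction, express $f(\vx + \vu) - f(\vx) = \int_0^1 \langle \grad(\vx + t\vu), \vu\rangle\, \dd t$ via the Fundamental Theorem of Calculus (Theorem~\ref{thm:FTC}), being mindful of the ``generic point'' caveat — the integral representation with $\langle \grad, \cdot\rangle$ requires $\vx$ or $\vx + \vu$ generic, but since we are eventually integrating over $\vu$, a Fubini/almost-everywhere argument handles this, or one works with the directional-derivative selection throughout.

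The key step is to insert $\nabla f_r(\vx)$ as a pivot and exploit that its radial component averages out. That is, I would use that $\ee_{\vu \sim \unif(\cb_r)}[\langle \nabla f_r(\vx), \vu\rangle] = 0$ (by symmetry of the ball), so that
\begin{equation}\notag
    f_r(\vx) - f(\vx) = \ee_{\vu\sim\unif(\cb_r)}\left[\int_0^1 \langle \grad(\vx + t\vu) - \nabla f_r(\vx),\, \vu\rangle\, \dd t\right].
\end{equation}
Bounding $\langle \grad(\vx + t\vu) - \nabla f_r(\vx), \vu\rangle \leq \|\grad(\vx + t\vu) - \nabla f_r(\vx)\|_2 \|\vu\|_2 \leq r\|\grad(\vx+t\vu) - \nabla f_r(\vx)\|_2$ and then integrating, I would convert the ball average over $\vu$ into an average over spheres $\cs_\rho$ for $\rho \in (0,r]$ (after the change of variables absorbing $t$), at which point the \gradBMO\ bound \eqref{eq:r-BMO-grad-def} gives that each such spherical average of $\|\grad(\vx + \cdot) - \nabla f_r(\vx)\|_2$ is at most $L_r$. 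This yields $f_r(\vx) - f(\vx) \leq r \cdot L_r$. The main obstacle I anticipate is getting the measure-theoretic bookkeeping exactly right: the interchange of the $\vu$-average and the $t$-integral (Fubini, justified by local Lipschitzness and boundedness on compact sets), the a.e. validity of the FTC representation, and the precise change of variables $(\vu, t) \mapsto t\vu$ that re-expresses the double average as a single supremum over radii $\rho = t r \le r$ of spherical averages — this is exactly the reason the definition \eqref{eq:r-BMO-grad-def} takes a supremum over $0 < \rho \le r$ rather than fixing $\rho = r$, as the paper's remark already flags.

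For the lower bound under convexity, the argument is immediate: Jensen's inequality applied to the convex function $f$ gives $f_r(\vx) = \ee_{\vu\sim\unif(\cb_r)}[f(\vx + \vu)] \geq f\big(\ee_{\vu\sim\unif(\cb_r)}[\vx + \vu]\big) = f(\vx)$, since the uniform distribution on the centered ball $\cb_r$ has mean zero. Hence $f_r(\vx) - f(\vx) \geq 0$ for all $\vx$, completing the proof.
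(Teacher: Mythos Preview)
Your proposal is correct and follows essentially the same route as the paper: write $f_r(\vx)-f(\vx)$ via the FTC, insert $\nabla f_r(\vx)$ as a pivot using the central symmetry of $\unif(\cb_r)$, apply Cauchy--Schwarz and Fubini, and finish by bounding the resulting (ball/spherical) average of $\|\grad(\vx+\cdot)-\nabla f_r(\vx)\|_2$ using Definition~\ref{def:BMO-grad-sphere}; the convex lower bound is likewise handled by Jensen in both. The only cosmetic difference is that the paper keeps the $t$-integral and the $\vu$-average separate and invokes the remark that ball averages at radius $\rho\le r$ are dominated by the supremum of spherical averages, whereas you phrase this as a polar change of variables absorbing $t$---these are the same step.
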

\begin{proof}
    The second claim (for convex functions) follows from Jensen's inequality. For the first claim, we start by using the definition of $f_r$ and the fundamental theorem of calculus (see Theorem \ref{thm:FTC}) applied to $f$ to conclude that
    \begin{align*}
        f_r(\vx) - f(\vx) &= \ee_{\vu \sim \unif(\cb)} [f(\vx + r\vu) - f(\vx)]\\
        &= \ee_{\vu \sim \unif(\cb)} \bigg[\int_{0}^1\innp{\gamma_f(\vx + tr\vu), r\vu}\dd t\bigg].
    \end{align*}
    Because $\unif(\cbr)$ is centrally symmetric, we have that $\ee_{\vu \in \unif(\cb)}[\innp{\vz, \vu}] = 0$ for any fixed vector $\vz \in \rr^d$. Hence, $\ee_{\vu \in \unif(\cb)}[\innp{\nabla f_r(\vx), \vu}] = 0$ and thus we can write
    \begin{align*}
        |f_r(\vx) - f(\vx)| &= r\bigg|\ee_{\vu \sim \unif(\cb)} \bigg[\int_0^1\innp{\grad(\vx + tr\vu) - \nabla f_r(\vx), \vu}\dd t\bigg]\bigg|\\
        &\stackrel{(i)}{\leq} r\ee_{\vu \sim \unif(\cb)} \bigg[\int_0^1\|\grad(\vx + tr\vu) - \nabla f_r(\vx)\|_2 \|\vu\|_2\dd t\bigg]\\
        &\stackrel{(ii)}{\leq} r\ee_{\vu \sim \unif(\cb)} \bigg[\int_0^1\|\grad(\vx + tr\vu) - \nabla f_r(\vx)\|_2 \dd t\bigg]\\
        &\stackrel{(iii)}{=} r\int_0^1\ee_{\vu \sim \unif(\cb)} \big[\|\grad(\vx + tr\vu) - \nabla f_r(\vx)\|_2\big] \dd t\\
        &\stackrel{(iv)}{\leq} rL_r,
    \end{align*}
    where ($i$) is by Jensen's inequality and Cauchy-Schwarz, ($ii$) is by  $\vu \in \cb$, so $ \|\vu\|_2 \leq 1,$ ($iii$) is by %
    Fubini's theorem, 
    and ($iv$) follows from Definition~\ref{def:BMO-grad-sphere}. 
\end{proof}

It is possible to obtain a tighter bound on the distance between $f$ and $f_r$ under an additional assumption about the subgradients of $f.$ This result is summarized in the following lemma and it will be particularly useful for obtaining near-dimension-independent convergence results in the parallel optimization setting.

\begin{lemma}\label{lemma:sparse-smooth-approx-err}
    Let $f$ be a locally Lipschitz function. Then for almost all $\vx\in\rr^d$
    \begin{equation}\notag
        f_r(\vx) - f(\vx) \leq r\,w(\partial_r f(\vx)),
    \end{equation}
where $w(K)=\mathbb{E}_{\vu\sim \unif{({\cal S})}}[\sup_{\vx_1, \vx_2\in K}\langle \vu,\vx_1 - \vx_2\rangle]$ denotes the mean width of a set $K$.
\end{lemma}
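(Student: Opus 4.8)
The plan is to mimic the argument of Lemma~\ref{lemma:generic-smooth-approx-err}, but instead of bounding the integrand $\|\grad(\vx+tr\vu)-\nabla f_r(\vx)\|_2$ crudely by $L_r$, I would keep track of the fact that the difference of subgradients being paired with $\vu$ lives in the translated Goldstein set $\partial_r f(\vx)$, and exploit the mean-width functional directly. First I would fix a generic $\vx$ (so that the FTC in the inner-product form of Theorem~\ref{thm:FTC} applies along almost every segment, and $\grad$ is a genuine gradient a.e.), and write, exactly as before,
\begin{equation}\notag
f_r(\vx) - f(\vx) = r\,\ee_{\vu \sim \unif(\cb)} \Big[\int_0^1 \innp{\grad(\vx + tr\vu) - \nabla f_r(\vx),\, \vu}\dd t\Big],
\end{equation}
using central symmetry of $\unif(\cb)$ to subtract the zero-mean term $\innp{\nabla f_r(\vx),\vu}$. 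Note that $\nabla f_r(\vx)$ is itself an average of gradients over $\cbr$, hence $\nabla f_r(\vx) \in \partial_r f(\vx)$ (this set is closed and convex by definition), and for almost every $t,\vu$ the vector $\grad(\vx+tr\vu)$ is a gradient at a point in $\cbr(\vx)$, hence also in $\partial_r f(\vx)$. Therefore the integrand $\innp{\grad(\vx+tr\vu)-\nabla f_r(\vx),\vu}$ is bounded above by $\sup_{\vx_1,\vx_2 \in \partial_r f(\vx)}\innp{\vu,\vx_1-\vx_2\rangle}$, a quantity depending only on $\vu$ (and $\vx$), not on $t$.

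The next step is to pass from the uniform-ball measure $\unif(\cb)$ to the uniform-sphere measure $\unif(\cs)$ appearing in the definition of $w(\cdot)$. Since the bound $\sup_{\vx_1,\vx_2 \in \partial_r f(\vx)}\innp{\vu,\vx_1-\vx_2\rangle}$ is positively homogeneous of degree $1$ in $\vu$, writing $\vu = \rho \vv$ with $\vv \in \cs$ and $\rho \in [0,1]$ and integrating out the radial part (the radial density is a harmless positive factor whose integral against $\rho$ is at most $1$, so this direction of the inequality goes through; one can also simply note $\rho \le 1$) reduces the bound to $\ee_{\vv \sim \unif(\cs)}\big[\sup_{\vx_1,\vx_2\in \partial_r f(\vx)}\innp{\vv,\vx_1-\vx_2}\big] = w(\partial_r f(\vx))$. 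Dropping the outer absolute value (we only need the upper bound on $f_r(\vx)-f(\vx)$, for which the displayed chain of inequalities is in the correct direction; no convexity is needed here) then yields $f_r(\vx)-f(\vx)\le r\,w(\partial_r f(\vx))$, as claimed, for almost every $\vx$.

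The main obstacle, and the place to be careful, is the measurability/genericity bookkeeping: the inner-product form of the FTC in Theorem~\ref{thm:FTC} only holds when an endpoint is chosen generically, so I should apply it along segments $[\vx, \vx+r\vu]$ for a.e.\ $\vu$ (Fubini guarantees this is fine for a.e.\ $\vx$), and I must make sure that ``$\grad(\vx+tr\vu)$ is a true gradient lying in $\partial_r f(\vx)$'' holds for a.e.\ $(t,\vu)$ — which again follows from Fubini together with Rademacher's theorem, since the non-differentiability set has measure zero and its preimage under $(t,\vu)\mapsto \vx+tr\vu$ is null for a.e.\ $\vx$. A minor secondary point is justifying the interchange of expectation and integral (Fubini/Tonelli, using local Lipschitzness to get local integrability of the gradient norms on the relevant compact ball), exactly as in the proof of Lemma~\ref{lemma:generic-smooth-approx-err}. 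Everything else is a direct transcription of that earlier proof with the single substitution of the mean-width bound for the $L_r$ bound.
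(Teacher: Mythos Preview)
Your proposal is correct and follows essentially the same argument as the paper: FTC along segments, subtract a fixed element of $\partial_r f(\vx)$ using central symmetry, bound the inner product by the support-function supremum over $\partial_r f(\vx)$, and pass from $\unif(\cb)$ to $\unif(\cs)$ via the radial variable. The only cosmetic difference is that the paper subtracts $\grad(\vx)$ (the gradient at the generic point $\vx$) rather than $\nabla f_r(\vx)$, which is slightly cleaner since $\grad(\vx)\in\partial_r f(\vx)$ holds trivially and avoids the need to argue that the average $\nabla f_r(\vx)$ lies in the (not-necessarily-closed) convex hull defining $\partial_r f(\vx)$.
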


\begin{proof}
For a generic $\vx\in\rr^d$, we have that $f$ is differentiable at $\vx$. Hence, using the first theorem of calculus and the central symmetry of $\cbr$, we have
\begin{align*}
        f_r(\vx)-f(\vx) 
        &= \ee_{\vu \sim \unif(\cb_r)}[f(\vx+\vu)-f(\vx)]\\
    &= \ee_{\vu \sim \unif(\cb_r)}\bigg[\int_0^1\innp{\grad(\vx+t\vu), \vu}\dd t \bigg]\\
    &= \ee_{\vu \sim \unif(\cb_r)}\bigg[\int_0^1\innp{\grad(\vx+t\vu) - \grad(\vx), \vu}\dd t \bigg]. 
    \end{align*}
    Thus, letting $K=\partial_r f(\vx)$ we can further conclude that
    \begin{align}
        f_r(\vx)-f(\vx) &\leq \ee_{\vu \sim \unif(\cb_r)}\bigg[\int_0^1 \sup_{\vg_1, \vg_2 \in K}\innp{\vg_1 - \vg_2, \vu}\dd t \bigg] \notag\\
        &= r \ee_{\vu \sim \unif(\cb)}\Big[\sup_{\vg_1, \vg_2 \in K}\innp{\vg_1 - \vg_2, \vu}\Big]\notag\\
        &= r d\int_0^1 \mathbb{E}_{\vu\sim \unif({\cal S})}\Big[ \sup_{\vg_1, \vg_2 \in K}\innp{\vg_1 - \vg_2, \tau\vu} \Big] \tau^{d-1} \dd\tau \label{eqn:polar_coords}\\
        &= r  w(K)  d\int_0^1 \tau^d \dd\tau \notag\\ &= \frac{d}{d+1} rw(K),\notag%
    \end{align}
where in \eqref{eqn:polar_coords} we used integration by polar coordinates.
\end{proof}

\begin{remark}\label{rem:bounded-Gaussian-width}
For discussions on the mean width, and the closely related Gaussian width, we refer the interested reader to \cite[Section 7.5]{Vershynin:2018}. We provide some useful examples of mean width bounds from this reference: %
\begin{enumerate}
\item[(i)] Euclidean ball: $w(\cb)=1$.
\item[(ii)] Cube: $w([-1,+1]^d)=\Theta(\sqrt{d})$.
\item[(iii)] Polytopes: If $K$ is a polytope with $k$ vertices, then $w(K)=O\Big(\mbox{diam}(K)\sqrt{\frac{\log(k)}{d}}\Big)$. 
\end{enumerate}
The last example is particularly important. Many problems of interest in convex optimization can be formulated as (or approximated by) the maximum of finitely-many affine functions. In that case, the $1/\sqrt{d}$ factor in the mean width bound provides a much more benign approximation than the worst-case bound for Lipschitz functions, corresponding to example (i).
\end{remark}

We now argue about the smoothness of the smoothed function $f_r$. 

\begin{lemma} \label{lem:smoothness_BMO}
    Let $f:\rr^d \to \rr$ be a \gradBMV\  
    function with constant $\Llocr$ and \gradBMO\ 
    with constant $L_r,$ where both constants are defined w.r.t.~the same fixed radius $r > 0.$ Let $f_r$ be defined by \eqref{eq:avg-fun-def}. Then, for all $\vx, \vy \in \rr^d,$
    \begin{equation}\notag
        \|\nabla f_r(\vx) - \nabla f_r(\vy)\|_2 \leq \min\Big\{\frac{L_r d}{r}, \, \markupadd{\sqrt{\frac{\pi}{2}}\frac{\Llocr \sqrt{d}}{r}}\markupdelete{O\Big(\frac{\Llocr \sqrt{d}}{r}\Big)}\Big\}\|\vx - \vy\|_2\markupdelete{,}\markupadd{.}
    \end{equation}
    \markupdelete{where $O(\cdot)$ only hides an absolute constant. }
\end{lemma}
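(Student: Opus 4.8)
The plan is to establish the two bounds on $\|\nabla f_r(\vx) - \nabla f_r(\vy)\|_2$ separately, each via a representation of $\nabla f_r$ as an expectation, and then take the minimum. The common starting point is to differentiate under the integral/expectation sign in the definition \eqref{eq:f_r-via-expectation}, which is justified by local Lipschitzness (dominated convergence), to get $\nabla f_r(\vx) = \ee_{\vu \sim \unif(\cb)}[\grad(\vx + r\vu)]$ for a.e.\ choices, or alternatively the Stokes-type representation from Lemma~\ref{lemma:stokes}. I would carry the two estimates out as follows.

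\textbf{The $\frac{L_r d}{r}$ bound.} Here I would use Lemma~\ref{lemma:stokes}, which gives $\nabla f_r(\vx) = \frac{d}{r}\ee_{\vu\sim\unif(\cs)}[f(\vx + r\vu)\vu]$. Subtracting the expressions at $\vx$ and $\vy$ and pulling the norm inside the expectation,
\[
\|\nabla f_r(\vx) - \nabla f_r(\vy)\|_2 \le \frac{d}{r}\ee_{\vu\sim\unif(\cs)}\big[|f(\vx + r\vu) - f(\vy + r\vu)|\,\|\vu\|_2\big].
\]
Since $\|\vu\|_2 = 1$ on $\cs$, the task reduces to bounding $|f(\vx+r\vu) - f(\vy+r\vu)|$ by $L_r \|\vx-\vy\|_2$. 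This is where the \gradBMO\ property enters: I would write the difference as an integral of $\grad$ along the segment joining $\vx+r\vu$ and $\vy+r\vu$ (FTC, Theorem~\ref{thm:FTC}), insert $\nabla f_r$ of the appropriate base point as a centering term, and bound the result using the mean-oscillation bound \eqref{eq:r-BMO-grad-def} — this is exactly the kind of manipulation already done in the proof of Lemma~\ref{lemma:generic-smooth-approx-err}. (The averaging over $\cs$ and over the segment, together with the supremum over $\rho \le r$ in the \gradBMO\ definition, is what makes the constant come out as $L_r$ rather than something larger.)

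\textbf{The $\sqrt{\pi/2}\,\frac{\Llocr\sqrt d}{r}$ bound.} Here I would instead use the representation $\nabla f_r(\vx) = \ee_{\vu\sim\unif(\cb)}[\grad(\vx+r\vu)]$ and a change of variables so that both expectations are over the same random point. Concretely, writing $\nabla f_r(\vx) = \frac{1}{\vol(\cb_r)}\int_{\cb_r} \grad(\vx+\vw)\,\dd\vw$, the difference $\nabla f_r(\vx) - \nabla f_r(\vy)$ becomes $\frac{1}{\vol(\cb_r)}\big(\int_{\cb_r(\vx)}\grad - \int_{\cb_r(\vy)}\grad\big)$; on the overlap the integrands cancel, leaving only the symmetric difference of the two balls, whose volume is $O(\|\vx-\vy\|_2/r)$ times $\vol(\cb_r)$ — and on that symmetric difference I can couple points via the \gradBMV\ bound $\|\grad(\cdot) - \grad(\cdot')\|_2 \le \Llocr$ for nearby arguments. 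Tracking the exact constant from the volume of the symmetric difference of two unit balls whose centers are at small distance $h$ gives the $\sqrt{\pi/2}$ factor: $\vol(\cb \setminus \cb(h\ve))/\vol(\cb) = \frac{h\,\vol(\cb^{d-1})}{\vol(\cb^d)}(1+o(1))$, and the ratio $\vol(\cb^{d-1})/\vol(\cb^d)$ is asymptotically $\sqrt{d/(2\pi)}$, yielding $\sqrt{\pi/2}\cdot\sqrt d$ after accounting for a factor-of-$2$ from counting both the "added" and "removed" regions and the diameter-$2$ of $\cb$.

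\textbf{Main obstacle.} The routine parts are the differentiation-under-the-integral justification and the FTC manipulations. The delicate step is getting the \emph{exact} constant $\sqrt{\pi/2}$ in the second bound: this requires carefully computing (or cleanly upper-bounding via a Cauchy–Schwarz / Gaussian-integral comparison) the surface-to-volume ratio of the Euclidean ball and the measure of the symmetric difference of two translated balls, rather than just asserting it is $O(\sqrt d/r)$. An alternative route that avoids the geometric volume computation — and the one I would actually pursue first — is to differentiate the Stokes representation once more, or to use the identity $\nabla f_r(\vx) = \frac{d}{r^2}\ee_{\vu\sim\unif(\cb)}[f(\vx+r\vu)\,\vu]$-type formula and bound the Lipschitz constant of the resulting vector field directly; but the cleanest derivation of the sharp constant is the symmetric-difference-of-balls argument sketched above, so I expect the writeup to hinge on that computation. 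Once both bounds are in hand, the claimed estimate follows by taking the minimum.
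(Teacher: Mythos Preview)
Your plan for the $L_r d/r$ bound has the right ingredients but applies them in the wrong order, and as written it does not go through. Once you pull the norm inside the expectation to get $\frac{d}{r}\,\ee_{\vu\sim\unif(\cs)}\big[|f(\vx+r\vu)-f(\vy+r\vu)|\big]$, you can no longer insert $\nabla f_r$ as a free centering term: the trick from the proof of Lemma~\ref{lemma:generic-smooth-approx-err} relies on $\ee_{\vu}[c\,\vu]=\zeros$ for a $\vu$-independent vector $c$, and that identity is usable only while the expression is still of the vector form $\ee_{\vu}\big[(\cdots)\,\vu\big]$. After the absolute value, adding and subtracting $\nabla f_r(\vx+t(\vy-\vx))$ leaves the uncontrolled residual $\big|\int_0^1\langle\nabla f_r(\vx+t(\vy-\vx)),\vy-\vx\rangle\,\dd t\big|=|f_r(\vy)-f_r(\vx)|$, and \gradBMO\ gives you no handle on $\|\nabla f_r\|$ by itself. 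The paper does exactly what you intend, but in the correct order: it keeps the vector quantity $\ee_{\vu}\big[(\cdots)\,\vu\big]$, inserts $\nabla f_r(\vx+t(\vy-\vx))$ using $\ee_{\vu}[c\,\vu]=\zeros$, and only \emph{then} applies Jensen and Cauchy--Schwarz.

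For the $\sqrt{\pi/2}\,\Llocr\sqrt d/r$ bound you take a genuinely different route from the paper, and it is worth contrasting them. The paper again stays with the Stokes representation, centers (this time with $\grad(\vx+t\vz)$) via $\ee_{\vu}[c\,\vu]=\zeros$, and then factors the expression as $\big(\sup_{\|\vv\|_2\le 1}\ee_{\vu\sim\unif(\cs)}[|\langle\vv,\vu\rangle|]\big)\cdot\Llocr\|\vz\|_2$; the constant $\sqrt{\pi/2}$ falls out of the integrated tail bound $\int_0^\infty e^{-dt^2/2}\,\dd t=\sqrt{\pi/(2d)}$ from concentration on the sphere. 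Your symmetric-difference idea is a legitimate alternative in spirit, but the coupling step fails as stated: for small $\|\vx-\vy\|_2$ the two crescents $\cb_r(\vx)\setminus\cb_r(\vy)$ and $\cb_r(\vy)\setminus\cb_r(\vx)$ sit on \emph{opposite} sides of the configuration, so any measure-preserving bijection between them (translation does not map one to the other, and reflection through the midpoint does but with pairwise distances up to $2r+\|\vx-\vy\|_2$) pairs points too far apart for \gradBMV\ at radius $r$ with constant $\Llocr$ to apply. One can repair the argument by subtracting a single fixed vector such as $\grad(\vx)$ from both integrals rather than coupling, but then the constant one obtains is not visibly $\sqrt{\pi/2}$, and your sketch of how $\vol(\cb^{d-1})/\vol(\cb^d)\sim\sqrt{d/(2\pi)}$ combines with ``a factor of $2$'' and ``the diameter $2$ of $\cb$'' to produce exactly $\sqrt{\pi/2}$ does not check out arithmetically.
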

\begin{proof}
    For the first bound in the min, we use 
    Lemma~\ref{lemma:stokes}, as follows. 
    \begin{align*}
        \|\nabla f_r(\vx) - \nabla f_r(\vy)\|_2 &= \frac{d}{r}\big\|\ee_{\vu \sim \unif(\cs)}\big[\big(f(\vx + r\vu) - f(\vy + r\vu)\big)\vu\big]\big\|_2\\
        &= \frac{d}{r}\bigg\|\ee_{\vu \sim \unif(\cs)}\bigg[\int_0^1\innp{\grad(\vx + r\vu + t(\vy - \vx)), \vy - \vx} \dd t\vu\bigg]\bigg\|_2,
    \end{align*}
    where we have used the first theorem of calculus. Further, $\ee_{\vu \sim \unif(\cs)} [c\vu] = \zeros,$ for any constant $c,$ as $\vu$ is centrally symmetric. Hence, we can further write 
    \begin{align*}
       &\; \|\nabla  f_r (\vx) - \nabla f_r(\vy)\|_2 \\
       =\; & \frac{d}{r}\bigg\|\ee_{\vu \sim \unif(\cs)}\bigg[\int_0^1\innp{\grad(\vx + r\vu + t(\vy - \vx)) - \nabla f_r(\vx + t(\vy - \vx)), \vy - \vx} \dd t\vu\bigg]\bigg\|_2\\
        \leq\; & \frac{d}{r}\ee_{\vu \sim \unif(\cs)}\bigg[\bigg\|\int_0^1\innp{\grad(\vx + r\vu + t(\vy - \vx)) - \nabla f_r(\vx + t(\vy - \vx)), \vy - \vx} \dd t\vu\bigg\|_2\bigg]\\
        \leq\; & \frac{d}{r}\ee_{\vu \sim \unif(\cs)}\bigg[\int_0^1|\innp{\grad(\vx + r\vu + t(\vy - \vx)) - \nabla f_r(\vx + t(\vy - \vx)), \vy - \vx}| \dd t\|\vu\|_2\bigg]\\
        \leq \; &\frac{d}{r}\ee_{\vu \sim \unif(\cs)}\bigg[\int_0^1\|\grad(\vx + r\vu + t(\vy - \vx)) - \nabla f_r(\vx + t(\vy - \vx))\|_2\|\vy - \vx\|_2 \dd t\bigg]\\
        =\; & \frac{d \|\vy - \vx\|_2}{r}\ee_{\vu \sim \unif(\cs)}\bigg[\int_0^1\|\grad(\vx + r\vu + t(\vy - \vx)) - \nabla f_r(\vx + t(\vy - \vx)))\|_2 \dd t\bigg]\\
        \leq \; & \frac{L_r d \|\vy - \vx\|_2}{r},
    \end{align*}
    where we have used Jensen's inequality (twice), $\|\vu\|_2 = 1$ for $\vu \in \cs$, Cauchy-Schwarz inequality, and Definition~\ref{def:BMO-grad-sphere}.

    For the second bound, we use the following sequence of inequalities with $\vz := \vy - \vx$:
    \begin{align*}
    &\|\nabla f_r(\vx)-\nabla f_r(\vy)\|_2\\
    =\; & \frac{d}{r}\Big\|\ee_{\vu\sim \unif(\cs) }[(f(\vx+r\vu)-f(\vy+r\vu))\vu] \Big\|_2\\
    =\; &\frac{d}{r}\sup_{\|\vv\|_2\leq 1}\innp{ \vv,\mathbb{E}_{\vu\sim \unif(\cs)}[(f(\vx+r\vu)-f(\vy+r\vu))\vu] }\\
    =\; & \frac{d}{r}\sup_{\|\vv\|_2 \leq 1}\innp{ \vv, \mathbb{E}_{\vu\sim \unif(\cs)}\bigg[\int_0^1\innp{\grad(\vx + r\vu + t\vz), \vz} \dd t\vu\bigg] }\\
    =\; & \frac{d}{r}\sup_{\|\vv\|_2\leq 1}\innp{ \vv, \mathbb{E}_{\vu\sim \unif(\cs)}\bigg[\int_0^1\innp{\grad(\vx + r\vu + t\vz) - \grad(\vx + t\vz)), \vz} \dd t\vu\bigg] }\\
    =\; & \frac{d}{r}\sup_{\|\vv\|_2\leq 1}\mathbb{E}_{\vu\sim \unif(\cs)}\bigg[\innp{ \vv, \vu} \int_0^1\innp{\grad(\vx + r\vu + t\vz) - \grad(\vx + t\vz)), \vz} \dd t\bigg] \\
    \leq\; & \frac{d}{r}\sup_{\|\vv\|_2\leq 1}\mathbb{E}_{\vu\sim \unif(\cs)}\big[|\innp{ \vv, \vu}|\big] \sup_{\|\vw\|_2\leq 1}\bigg|\int_0^1\innp{\grad(\vx + r\vw + t\vz) - \grad(\vx + t\vz)), \vz} \dd t\bigg| \\
    \leq\; & \frac{\Llocr d}{r}\|\vy - \vx\|_2\sup_{\|\vv\|_2\leq 1}\mathbb{E}_{\vu\sim \unif(\cs)}\big[|\innp{ \vv, \vu}|\big],
\end{align*}
where we have used Definition \ref{def:bounded-max-loc-var} in the last line. To complete the proof, it remains to use 
\begin{align*}
 \sup_{\|\vv\|_2\leq 1}\mathbb{E}_{\vu\sim \unif(\cs)}\big[|\innp{ \vv, \vu}|\big]
=\;& \markupadd{\sup_{\|\vv\|_2\leq 1}\int_0^{\infty} \mathbb{P}[|\langle \vv,\vu\rangle|>t] \dd t}\\
\leq\; & \markupadd{\int_0^{+\infty}\exp(-dt^2/2)\dd t =\sqrt{\frac{\pi}{2d}}}
\markupdelete{=O\Big(\frac{1}{\sqrt{d}}\Big)},
\end{align*}
\markupadd{where we have used $\Pr[\innp{ \vv, \vu} \geq c] \leq e^{-\frac{d c^2}{2}}$,} which holds \markupadd{for all $c \in [0, 1),$} by the concentration of measure on a (unit) sphere; see, e.g., \cite[Lemma 2.2]{Ball:1997}. %
\end{proof}

\subsection{A Discussion of \gradBMV\ and \gradBMO\ Classes}\label{sec:examples}

We now provide some examples that illustrate how classes of \gradBMV\ and \gradBMO\ functions compare to each other and to classical classes of objective functions studied in the optimization literature. First, based on the definition of constants $L_r$ and $\Llocr$ defining the \gradBMO\ and \gradBMV\ classes, it is immediate that
\begin{equation}\label{eq:Lr<Llocr}
    L_r \leq \widehat{L}_{2r}.
\end{equation}
As it turns out, it is possible for $L_r$ to be much smaller than $\Llocr,$ as illustrated below. 

\markupadd{\begin{example}\label{ex:Lr-vs-Llocr-worst-case}
    Consider the following function $f:\rr^d\to\rr$ for $K = \left\lceil \sqrt{\frac{d}{2\ln(2d)}}\right\rceil:$
\begin{equation}\notag
    f(\vx) = \max_{0 \leq i \leq K} \Big\{\frac{i}{K}x_1 - \frac{(i-1)i}{K^2}\Big\}.
\end{equation}
This is a piecewise-linear function dependent only on $x_1,$ with subgradient being the zero vector for $x_1 \leq 0,$ and then gradually increasing as $\frac{i}{K} \ve_1$ as $x_1$ is increased between 0 and 1 in $1/K$ increments. As a consequence, $\grad(\vx) = \sum_{i=1}^K \frac{i}{K} \ones\{x_1 \in (\frac{i-1}{K}, \frac{i}{K}]\}\ve_1 + \ones\{x_1 \geq 1\}\ve_1$ is a subgradient of $f$ at $\vx$ for all $\vx \in \rr^d,$ where $\ones\{\cdot\}$ is one if its argument is true and  zero otherwise, while $\ve_1$ denotes the first standard basis vector. Taking $r = 1,$ it is immediate that $\Llocr = 1.$ To bound $L_r$ above, it is evident that only vectors with $x_1\in[0,1]$ can attain the supremum that defines $L_r$. Suppose that $\frac{i-1}{K}<x_1\leq \frac{i}{K}$, for $i \in \{1, \dots, K\}$. Let $\vu\sim \unif(\cs_{\rho})$ for $0<\rho\leq 1$. 
Using concentration of measure on the sphere, we have that with probability at least $1-1/d$, $|u_1|\leq 1/K$. As a consequence,  under the same event, $x_1 + u_1$ can only reach up to two (neighboring) linear pieces defining $f(\vx)$. Without loss of generality, in the worst case, 
$\frac{i-2}{K}< x_1 + u_1 <\frac{i+1}{K}$. 
This implies that $\nabla f_r(\vx)\in[\frac{i-2}{K}, \frac{i + 1}{K}]$, and that with probability $1-1/d$, $\gamma_f(\vx+\vu)\in \{\frac{i-1}{K}, \frac{i}{K},\frac{i+1}{K}\}$, thus
\[ \frac{1}{\vol(\cs_{\rho})}\int_{\cs_{\rho}}\|\gamma_f(\vx+\vu)-\nabla f_r(\vx)\|_2 \dd\vu \leq \frac{1}{d}+ O\Big(\frac{1}{K}\Big) =O\Big(\frac{1}{K}\Big)=O\Big(\sqrt{\frac{\ln d}{d}}\Big). \]
Taking $d\to+\infty$, we conclude that the ratio between $\hat L_r$ and $L_r$ can be arbitrarily large.
\end{example}}

\markupadd{Additionally, if we look at \emph{local} values of $L_r$ and $\Llocr$ (in a ball of radius $r$  around a point $\vx$ that we fix)---denoted by $\Llocr(\vx)$ and $L_r(\vx)$ to disambiguate from their worst-case values defined earlier in this section---it is possible for $L_r(\vx)$ to be even exponentially smaller than $\Llocr(\vx),$ in terms of the dimension. This is illustrated in Example \ref{ex:Lr-vs-Llocr} below. The provided observation makes the possibility of developing algorithms that are adaptive to local values of $L_r$ particularly appealing as a direction for future research.}

\begin{example}\label{ex:Lr-vs-Llocr}
    Consider the following function $f:\rr^d\to\rr$ for $c \in [1, \sqrt{d - 1})$:
    \begin{equation}\label{eq:f-for-Lr-vs-Llocr}
        f(\vx) = \begin{cases}
            0, &\text{ if } |x_1| \leq \frac{c}{\sqrt{d-1}}\\
            |x_1| - \frac{c}{\sqrt{d-1}}, &\text{ if } |x_1| > \frac{c}{\sqrt{d-1}}
        \end{cases}.
    \end{equation}
    This function is convex and 1-Lipschitz continuous. It is differentiable everywhere except at {the set $\{|x_1|=\frac{c}{\sqrt{d-1}}\}$}, with its subdifferential given by
    \begin{equation}\label{eq:grad-f-for-Lr-vs-Llocr}
        \partial f(\vx) = \begin{cases}
            \{\zeros\}, &\text{ if } |x_1| < \frac{c}{\sqrt{d-1}}\\
            \{\mathrm{sign}(x_1)\ve_1\}, &\text{ if } |x_1| > \frac{c}{\sqrt{d-1}}\\
            \mathrm{conv}(\zeros, \mathrm{sign}(x_1)\ve_1), &\text{ if } |x_1| = \frac{c}{\sqrt{d-1}}
        \end{cases},
    \end{equation}
    where $\mathrm{sign}(x_1)$ is equal to one if $x_1 \geq 0$ and is equal to $-1$ otherwise. 
    For $r \geq 2\frac{c}{\sqrt{d-1}}$ \markupadd{and $\vx = \zeros,$ assuming $\grad(\vx) = \arg\sup_{\vg \in \partial f(\vx)}\|\vg\|_2,$} we have that $\Llocr\markupadd{(\zeros)} = 2$, which immediately follows from \eqref{eq:grad-f-for-Lr-vs-Llocr}\markupadd{, since $\|\grad((r/2) \ve_1) - \grad(-(r/2)\ve_1)\|_2 = 2$}. On the other hand, \markupdelete{the \gradBMO\ constant} $L_r\markupadd{(\zeros)}$ is determined by the average subgradient variation around $\vx = \zeros,$ where, by symmetry, $\nabla f_r(\vx) = 0.$ Assuming $\frac{c}{\sqrt{d-1}} \leq \frac{1}{2}$ and taking $r=1,$ we thus get that
    \begin{equation}\notag
        L_r\markupadd{(\zeros)} = \sup_{0 < \rho \leq 1}\int_{\cs_{\rho}}\|\grad(\vx + \vu)\|_2\dd\vu = \int_{\cs_1}\|\grad(\vu)\|_2\dd\vu = \int_{\cs_1 \cap \{|u_1| > \frac{c}{\sqrt{d-1}}\}}1\dd\vu.
    \end{equation}
    Using an adaption of \cite[Theorem 2.7]{blum2020foundations} from the unit ball to the unit sphere, it is possible to show that the above integral is bounded by $\frac{2}{c}e^{-c^2/2}.$ Thus, taking, e.g., $c = \frac{\sqrt{d-1}}{2}$ and increasing the dimension $d,$ we can make $L_r\markupadd{(\zeros) \leq \frac{4}{\sqrt{d-1}}e^{-\frac{d-1}{8}}}$ arbitrarily small. 
\end{example}

Because $M$-Lipschitz continuous functions can be equivalently defined as functions whose subgradient is uniformly bounded by $M,$ it is immediate that $L_r \leq \Llocr \leq 2M$ for any $r > 0.$ The latter inequality is tight in general, as is apparent by considering the univariate function $f(x) = M|x|.$ 

Classes of $(M, \kappa)$-weakly smooth functions for $\kappa \in [0, 1]$ are also captured by classes of \gradBMV and \gradBMO\ functions. In particular, $(M, \kappa)$-weakly smooth functions are defined as continuously differentiable functions with H\"{o}lder-continuous gradient, satisfying
\begin{equation}\label{eq:weakly-smooth-def}
    \|\nabla f(\vx) - \nabla f(\vy)\|_2 \leq M\|\vx - \vy\|_2^\kappa, \quad \forall \vx, \vy \in \rr^d.
\end{equation}
It is immediate from this definition that $L_r \leq \Llocr \leq M r^\kappa,$ $\forall r > 0.$ 

It is possible that a function is \gradBMV\ (and thus also \gradBMO) but neither Lipschitz-continuous nor weakly smooth, for any finite $M$ and $\kappa \in [0, 1].$ This is illustrated by the following example, which extends the univariate example from Section~\ref{sec:GradBMV}.

\begin{example}\label{ex:nonsmooth-quadratic-growth}
    Consider the function $f: \rr^d \to \rr$ defined by
        \begin{equation}\label{eq:nonsmooth-quadratic-growth}
            f(\vx) = \begin{cases} 0, &\text{ if } \|\vx\|_2 \leq 1,\\
            \frac{1}{2}\|\vx\|_2^2 - \frac{1}{2}, &\text{ otherwise}
            \end{cases}.
        \end{equation}
    This is a continuous function whose gradient is discontinuous on the sphere $\|\vx\|_2 = 1.$ In more detail, within the unit ball $\|\vx\|_2,$ $\nabla f(\vx) = 0,$ while outside the unit ball $\nabla f(\vx) = \vx.$ Thus, for $r \in (0, 1],$ $\Llocr = \sup_{\vx \in \rr^d, \vu \in \cb}\|\nabla f(\vx + r\vu) - \nabla f(\vx)\|_2 = 1.$ On the other hand, this function is neither weakly smooth (as its gradient is not continuous) nor globally Lipschitz continuous (as for $\|\vx\|_2 > 1,$ we have $\|\nabla f(\vx)\|_2 = \|\vx\|_2,$ which is unbounded). 
\end{example}

Another interesting consequence of Example~\ref{ex:nonsmooth-quadratic-growth} is that, unlike Lipschitz continuity, \gradBMV\ does not preclude quadratic growth of a function. This property appears particularly useful for the study of complexity of nonsmooth optimization under local error bound conditions \cite{pang1997error}, \cite[Chapter 8]{cui2021modern}, which can enable linear convergence of algorithms; see \cite{davis2022nearly} for one such example.  

\subsection{\markupadd{Structural Results}}

\markupadd{In this subsection, we derive structural results that are particularly useful for the analysis of standard first-order methods. We first show that \gradBMV\ functions can be characterized as being ``approximately smooth,'' in the sense that they behave as smooth functions between points further apart than $r.$ Precise statements formalizing this geometric intuition are provided in the following two lemmas.}

\markupadd{\begin{lemma}\label{lem:upper-quadratic-approx}
    Let $f:\rr^d \to \rr$ be \gradBMV\ at some radius $r_0 > 0.$ Then for all $r \in (0, r_0]$, for \gradBMV\ parameter $\Llocr$ associated with $r,$ the following holds:
    \begin{equation}\notag
        f(\vy) - f(\vx) - \innp{\grad(\vx), \vy - \vx} \leq \begin{cases}
            \frac{\Llocr}{2r}\|\vy - \vx\|_2^2, \text{ if } \|\vy - \vx\|_2 > r,\\
            \Llocr\|\vy - \vx\|_2, \text{ if } \|\vy - \vx\|_2 \leq r.
        \end{cases}
    \end{equation}
\end{lemma}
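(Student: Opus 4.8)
The plan is to use the fundamental theorem of calculus (Theorem~\ref{thm:FTC}) to write the gap $f(\vy) - f(\vx) - \innp{\grad(\vx), \vy - \vx}$ as an integral and then control the integrand via the \gradBMV\ property of Definition~\ref{def:bounded-max-loc-var}. Concretely, assuming first that $\vx$ (or a suitable pair of points) is generic, so that the inner-product form of FTC applies, write
\begin{equation}\notag
f(\vy) - f(\vx) - \innp{\grad(\vx), \vy - \vx} = \int_0^1 \innp{\grad(\vx + t(\vy - \vx)) - \grad(\vx), \vy - \vx}\dd t,
\end{equation}
and bound the integrand by $\|\grad(\vx + t(\vy - \vx)) - \grad(\vx)\|_2 \|\vy - \vx\|_2$ via Cauchy--Schwarz.

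For the \textbf{short-step case} $\|\vy - \vx\|_2 \leq r$, every point $\vx + t(\vy-\vx)$ with $t \in [0,1]$ lies within distance $\|\vy-\vx\|_2 \leq r$ of $\vx$, so Definition~\ref{def:bounded-max-loc-var} directly gives $\|\grad(\vx + t(\vy-\vx)) - \grad(\vx)\|_2 \leq \Llocr$, yielding the bound $\Llocr \|\vy-\vx\|_2$ after integrating. For the \textbf{long-step case} $\|\vy - \vx\|_2 > r$, the idea is to chain the \gradBMV\ bound along the segment: partition $[\vx, \vy]$ into pieces of length at most $r$, so that moving from one subdivision point to the next changes the (sub)gradient by at most $\Llocr$. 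If we set $N = \lceil \|\vy-\vx\|_2 / r\rceil$ and let $\vx_k = \vx + \frac{k}{N}(\vy-\vx)$, then $\|\vx_{k} - \vx_{k-1}\|_2 \leq r$, and a telescoping/triangle-inequality argument gives $\|\grad(\vx_k) - \grad(\vx)\|_2 \leq k\Llocr$. Bounding the integral of $\innp{\grad(\vx + t(\vy-\vx)) - \grad(\vx), \vy-\vx}$ by summing over the $N$ subintervals, each of length $\frac{1}{N}$, with the subgradient deviation on the $k$-th interval bounded by roughly $k\Llocr$, produces a sum $\frac{\|\vy-\vx\|_2}{N}\Llocr \sum_{k=1}^{N} k \approx \frac{\|\vy-\vx\|_2}{N}\Llocr \cdot \frac{N^2}{2} = \frac{N \Llocr \|\vy-\vx\|_2}{2}$; since $N \leq \|\vy-\vx\|_2/r + 1$, this is $\approx \frac{\Llocr}{2r}\|\vy-\vx\|_2^2$ up to the rounding in $N$. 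One must be slightly careful with the ceiling: either argue that the $+1$ is absorbed because $\|\vy-\vx\|_2 > r$ (so $N \leq 2\|\vy-\vx\|_2/r$, costing a constant) or do the bookkeeping more tightly; I would aim to get exactly $\frac{\Llocr}{2r}\|\vy-\vx\|_2^2$ by using the sharper estimate that on the $k$-th subinterval the deviation is at most $(k-1)\Llocr + \Llocr = k\Llocr$ but the last partial interval has length less than $\frac{1}{N}$ — or simply use the continuous version, bounding $\|\grad(\vx + s(\vy-\vx)/\|\vy-\vx\|_2) - \grad(\vx)\|_2 \le \lceil s/r \rceil \Llocr$ and integrating in arclength $s$ from $0$ to $\|\vy-\vx\|_2$.

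The main obstacle is handling measurability/genericity of the subgradient selection along the segment: the inner-product form of FTC in Theorem~\ref{thm:FTC} requires one endpoint to be generic, and the chaining argument silently assumes $\grad$ is defined at the subdivision points $\vx_k$ and that the \gradBMV\ inequality applies pairwise among them. Since Definition~\ref{def:bounded-max-loc-var} is stated as holding for \emph{all} $\vx$ and \emph{all} $\vu \in \cb$ (with $\grad$ a fixed measurable selection), this is actually not an issue — the inequality $\|\grad(\vx') - \grad(\vx'')\|_2 \leq \Llocr$ holds whenever $\|\vx' - \vx''\|_2 \leq r$ — so the chaining is legitimate, and the only care needed is to invoke the directional-derivative (first) form of FTC and then pass to the inner-product form, or to note that the desired inequality is closed under the a.e.-to-everywhere limiting argument because both sides are continuous in $\vy$. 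I would state the argument via arclength parametrization to keep the constant $\frac{1}{2r}$ exact, and remark that the short-step bound is just the degenerate case $N=1$ of the same computation.
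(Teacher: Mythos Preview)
Your proposal is correct and follows essentially the same approach as the paper: apply FTC to write the gap as an integral, use Cauchy--Schwarz, handle the short-step case directly via Definition~\ref{def:bounded-max-loc-var}, and for the long-step case partition the segment $[\vx,\vy]$ into $m = \lfloor \|\vy-\vx\|_2/r\rfloor + 1$ pieces (your $N = \lceil \|\vy-\vx\|_2/r\rceil$ is the same up to the integer edge case), telescope the subgradient differences along the subdivision points, and sum. The paper bounds the $i$-th subintegral by $\tfrac{i}{m}\Llocr\|\vy-\vx\|_2$ exactly as you describe, then sums and uses $m-1 \leq \|\vy-\vx\|_2/r$ to land on $\tfrac{\Llocr}{2r}\|\vy-\vx\|_2^2$; your concern about the ceiling/rounding is the same bookkeeping the paper handles (somewhat loosely) at that step, and your suggested arclength variant would work equally well.
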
}
\begin{proof}
\markupadd{The proof is based on interpreting the subgradient of $f$ as an inexact oracle for a smooth function. While this is a known idea in convex optimization \cite{nesterov2015universal,Devolder:2014}, we proceed differently from previous works when the points of interest lie further apart. Here we partition the line segment joining these two points and apply the \gradBMV\ property in each of these intervals (see Case 2 below). Aggregating these bounds then provides a sharper quadratic upper bound. }

    \markupadd{The definition of \gradBMV\ functions requires that the function is locally Lipschitz, thus FTC (Theorem \ref{thm:FTC}) applies, by which
    \begin{equation}\label{eq:approx-smooth-ftc}
        f(\vy) - f(\vx) - \innp{\grad(\vx), \vy - \vx} = \int_0^1 \innp{\gamma(\vx + t(\vy - \vx)) - \gamma(\vx), \vy - \vx}\dd t. 
    \end{equation}
    It is immediate from the definition of \gradBMV\ that if a function is \gradBMV\ at radius $r_0,$ then it is \gradBMV\ at any radius $r \in (0, r_0],$ possibly with a smaller value of the associated parameter $\Llocr.$ Fix any $r \in (0, r_0]$ and consider the following two possible cases.}

    \markupadd{\noindent\textbf{Case 1: $\|\vy - \vx\|_2 \leq r$.} Then, by the definition of \gradBMV\ functions, it must be $\|\gamma(\vx + t(\vy - \vx)) - \gamma(\vx)\|_2 \leq \Llocr$ for any $t \in [0, 1].$ Thus, bounding the inner product on the right-hand side of \eqref{eq:approx-smooth-ftc} using Cauchy-Schwarz inequality and  $\|\gamma(\vx + t(\vy - \vx)) - \gamma(\vx)\|_2 \leq \Llocr$, and integrating, we get that
    \begin{equation}\label{eq:approx-smooth-ltr}
        f(\vy) - f(\vx) - \innp{\grad(\vx), \vy - \vx} \leq \Llocr \|\vy - \vx\|_2.
    \end{equation}
    }

    \markupadd{\noindent\textbf{Case 2: $\|\vy - \vx\|_2 > r$.} Let $m = \lfloor \frac{\|\vy - \vx\|_2}{r} \rfloor + 1$. Then, we have
\begin{align*}
   \; & \int_{0}^1 \innp{\grad(\vx + t(\vy - \vx)) - \grad(\vx), \vy - \vx} \dd t\\
   =\; & \sum_{i = 1}^m \int_{(i-1)/m}^{i/ m} \innp{\grad(\vx + t(\vy - \vx)) - \grad(\vx), \vy - \vx} \dd t. 
\end{align*}
We now bound each of the integrals as follows:
\begin{align*}
   &\; \int_{(i-1)/m}^{i/ m} \innp{\grad(\vx + t(\vy - \vx)) - \grad(\vx), \vy - \vx} \dd t \\
   =\; & \int_{(i-1)/m}^{i/ m} \innp{\grad(\vx + t(\vy - \vx)) - \grad(\vx + (i-1)/m (\vy - \vx)), \vy - \vx} \dd t \\
   &+ \frac{1}{m}\sum_{j= 1}^{i-1} \innp{\grad(\vx + j/m (\vy - \vx)) - \grad(\vx_j + (j-1)/m (\vy - \vx)), \vy - \vx}\\
   &\; \leq \frac{1}{m}\Llocr \|\vy - \vx\|_2 + \frac{i-1}{m} \Llocr \|\vy - \vx\|_2 = \frac{i}{m} \Llocr\|\vy - \vx\|_2.
\end{align*}
Now summing over $i$ and plugging back into \eqref{eq:approx-smooth-ftc}, we finally get
\begin{align}
    f(\vy) - f(\vx) - \innp{\grad(\vx), \vy - \vx} &\leq \Llocr \|\vy - \vx\|_2 \sum_{i=1}^m \frac{i}{m}\notag\\
    &\leq \Llocr \frac{m-1}{2}\|\vy - \vx\|_2 \notag\\
    &\leq \frac{\Llocr}{2 r}\|\vy - \vx\|_2^2,\label{eq:approx-smooth-gtr}
\end{align}
where in the last inequality we used $m -1 = \lfloor \frac{\|\vy - \vx\|_2}{r} \rfloor \leq \frac{\|\vy - \vx\|_2}{r},$ by our choice of $m.$}

\markupadd{To complete the proof, it remains to combine \eqref{eq:approx-smooth-ltr} and \eqref{eq:approx-smooth-gtr}.}
\end{proof}

\markupadd{Another useful inequality that is a consequence of Lemma \ref{lem:upper-quadratic-approx} is akin to interpolation inequalities, which are used to characterize the class of smooth convex functions; see, for example \cite{taylor2017convex}. It is provided in the following lemma.}

\markupadd{\begin{lemma}
    Let $f:\rr^d \to \rr$ be \gradBMV\ at some radius $r_0 > 0.$ Then for all $r \in (0, r_0]$, for \gradBMV\ parameter $\Llocr$ associated with $r,$ the following holds: for all $\vx, \vy \in \rr^d$ such that $\|\grad(\vy) - \grad(\vx)\|_2 > \Llocr,$
    \begin{equation}\notag
        \frac{r}{2\Llocr}\|\grad(\vy) - \grad(\vx)\|_2^2 \leq f(\vy) - f(\vx) - \innp{\grad(\vx), \vy - \vx}. 
    \end{equation}
\end{lemma}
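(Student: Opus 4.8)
The plan is to replay the textbook proof of the interpolation inequality for $L$-smooth convex functions, with Lemma~\ref{lem:upper-quadratic-approx} playing the role of the descent lemma. Fix $r \in (0, r_0]$ and a pair $\vx, \vy$ with $\norm{\grad(\vy) - \grad(\vx)}_2 > \Llocr$, and introduce the shifted function $g(\vz) := f(\vz) - \innp{\grad(\vx), \vz}$ together with the measurable selection $\gamma_g(\vz) := \grad(\vz) - \grad(\vx)$. Subtracting a linear term leaves local Lipschitzness and every subgradient difference unchanged, so $g$ is again \gradBMV\ at radius $r_0$ with the same associated constants, and by construction $\gamma_g(\vx) = \zeros$. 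One point worth flagging: the step below where I conclude that $\vx$ minimizes $g$ uses convexity of $f$ (hence of $g$); this is consistent with the surrounding discussion of interpolation inequalities for smooth convex functions, and a concave example such as $f(x) = -x^2$ (which is \gradBMV) already violates the claimed bound, so I would state the lemma for convex \gradBMV\ functions.

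Next I apply Lemma~\ref{lem:upper-quadratic-approx} to $g$ at the pair $\vy$ and the ``gradient-step'' target $\vz := \vy - \tfrac{r}{\Llocr}\vg$, where $\vg := \gamma_g(\vy) = \grad(\vy) - \grad(\vx)$. The hypothesis $\norm{\vg}_2 > \Llocr$ gives $\norm{\vz - \vy}_2 = \tfrac{r}{\Llocr}\norm{\vg}_2 > r$, which is exactly what selects the quadratic branch of Lemma~\ref{lem:upper-quadratic-approx} (this is the only place the hypothesis enters). That branch yields
\[
g(\vz) \;\leq\; g(\vy) + \innp{\vg, \vz - \vy} + \frac{\Llocr}{2r}\norm{\vz - \vy}_2^2 \;=\; g(\vy) - \frac{r}{\Llocr}\norm{\vg}_2^2 + \frac{r}{2\Llocr}\norm{\vg}_2^2 \;=\; g(\vy) - \frac{r}{2\Llocr}\norm{\vg}_2^2 .
\]

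Finally, since $g$ is convex with $\zeros \in \partial g(\vx)$, the point $\vx$ is a global minimizer of $g$, so $g(\vx) \leq g(\vz)$; combining with the display gives $\tfrac{r}{2\Llocr}\norm{\vg}_2^2 \leq g(\vy) - g(\vx)$. Unfolding $g$ — the linear terms telescope to $-\innp{\grad(\vx), \vy - \vx}$ — recovers exactly the claimed inequality. I do not expect a genuine obstacle once Lemma~\ref{lem:upper-quadratic-approx} is in hand; the only delicate points are choosing the step length $\tfrac{r}{\Llocr}$ so that (a) we land in the quadratic rather than the linear regime of Lemma~\ref{lem:upper-quadratic-approx} and (b) the two $\norm{\vg}_2^2$ contributions combine with coefficient precisely $\tfrac{r}{2\Llocr}$, together with the convexity hypothesis needed to make $\vx$ a minimizer of $g$, which I would surface explicitly in the statement.
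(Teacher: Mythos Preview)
Your proof is correct and follows essentially the same route as the paper: define the shifted function $g(\vz)=f(\vz)-\innp{\grad(\vx),\vz}$ (the paper's $h_{\vx}$), take a gradient step of length $r/\Llocr$ from $\vy$, apply the quadratic branch of Lemma~\ref{lem:upper-quadratic-approx}, and use that $\vx$ minimizes $g$. Your observation that convexity of $f$ is needed but not stated in the lemma is spot on---the paper's proof also silently invokes it when asserting that $h_{\vx}$ is convex and minimized by $\vx$.
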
}
\begin{proof}
    \markupadd{Following a standard approach for proving interpolation inequalities of this type (see, e.g., \cite{taylor2017convex}), fix $\vx \in \rr^d$ and consider the function $h_\vx$ defined by $h_{\vx}(\vy):= f(\vy) - \innp{\grad(\vx), \vy}.$ It is not hard to verify that this function is convex, \gradBMV\ for all $r \in (0, r_0],$ and minimized by $\vx.$ Fix any $r \in (0, r_0].$ Suppose that $\|\gamma_{h_{\vx}}(\vy)\|_2 = \|\grad(\vy) - \grad(\vx)\|_2 > \Llocr.$ Then $\|\vy - (\vy - \frac{r}{\Llocr}\gamma_{h_{\vx}}(\vy))\|_2 > r$. Thus, using $h_\vx(\vx) \leq h_{\vx}(\vy'),$ for all $\vy' \in \rr^d,$ and applying Lemma \ref{lem:upper-quadratic-approx}, we get
    \begin{align*}
        h_{\vx}(\vx) - h_\vx(\vy) &\leq h_\vx(\vy - \frac{r}{\Llocr}\gamma_{h_{\vx}}(\vy)) - h(\vy)\\
        &\leq - \frac{r}{\Llocr} \innp{\gamma_{h_{\vx}}(\vy), \gamma_{h_{\vx}}(\vy)} + \frac{\Llocr}{2r}\Big\|\frac{r}{\Llocr} \gamma_{h_{\vx}}(\vy)\Big\|_2^2\\
        &= - \frac{r}{2\Llocr}\|\gamma_{h_{\vx}}(\vy)\|_2^2.
    \end{align*}
    To complete the proof, it remains to plug the definition of $h_\vx$ into the above inequality and simplify.
    }
\end{proof}

\markupadd{Finally, to make use of approaches based on randomized smoothing, it is important to bound the variance of stochastic gradient estimates $\grad(\vx + r \vu),$ $\vu \sim \unif(\cB),$ of $\nabla f_r,$ where we recall the smoothed function $f_r$ was defined in \eqref{eq:avg-fun-def}, with an equivalent definition using expectations provided in \eqref{eq:f_r-via-expectation}.}

\markupadd{\begin{lemma}\label{lem:variance-bnd}
    Let $f:\rr^d \to \rr$ be \gradBMO\ at some radius $r > 0$ with parameter $L_r$ and \gradBMV\ at radius $2r$ with parameter $\widehat{L}_{2r}$. Then, for all $\vx \in \rr^d,$
    \begin{equation}\notag
        \ee_{\vu \sim \unif(\cb)}\big[\|\grad(\vx + r\vu) - \nabla f_r(\vx)\|_2^2\big] \leq L_r \widehat{L}_{2r}.
    \end{equation}
\end{lemma}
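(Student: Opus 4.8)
The plan is to bound the second moment $\ee_{\vu \sim \unif(\cb)}\big[\|\grad(\vx + r\vu) - \nabla f_r(\vx)\|_2^2\big]$ by factoring the square as a product of two norms and controlling each factor separately. Writing $G(\vu) := \grad(\vx + r\vu) - \nabla f_r(\vx)$, the key observation is that $\|G(\vu)\|_2^2 = \|G(\vu)\|_2 \cdot \|G(\vu)\|_2$, where one copy of $\|G(\vu)\|_2$ will be bounded pointwise (almost surely in $\vu$) by a constant, and the other copy will be integrated and bounded in expectation using the \gradBMO\ property.

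For the pointwise factor, I would use the \gradBMV\ property at radius $2r$. Since $\nabla f_r(\vx) = \ee_{\vv \sim \unif(\cb)}[\grad(\vx + r\vv)]$, Jensen's inequality gives
\begin{equation*}
    \|\grad(\vx + r\vu) - \nabla f_r(\vx)\|_2 \leq \ee_{\vv \sim \unif(\cb)}\big[\|\grad(\vx + r\vu) - \grad(\vx + r\vv)\|_2\big].
\end{equation*}
For $\vu, \vv \in \cb$, the points $\vx + r\vu$ and $\vx + r\vv$ differ by $r\|\vu - \vv\|_2 \leq 2r$, so $\grad(\vx + r\vu) = \grad\big((\vx + r\vv) + (r\vu - r\vv)\big)$ with $\|r\vu - r\vv\|_2 \leq 2r$; applying Definition~\ref{def:bounded-max-loc-var} at radius $2r$ bounds each term by $\widehat{L}_{2r}$, hence $\|G(\vu)\|_2 \leq \widehat{L}_{2r}$ for almost every $\vu$. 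Therefore
\begin{equation*}
    \ee_{\vu \sim \unif(\cb)}\big[\|G(\vu)\|_2^2\big] \leq \widehat{L}_{2r}\, \ee_{\vu \sim \unif(\cb)}\big[\|G(\vu)\|_2\big].
\end{equation*}

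It remains to bound $\ee_{\vu \sim \unif(\cb)}\big[\|\grad(\vx + r\vu) - \nabla f_r(\vx)\|_2\big]$ by $L_r$. Here I would write the expectation over the ball as an integral over spheres in polar coordinates, $\ee_{\vu \sim \unif(\cb_r)}[\cdot] = \frac{d}{r^d}\int_0^r \rho^{d-1}\big(\frac{1}{\vol(\cs_\rho)}\int_{\cs_\rho}[\cdot]\dd\vu\big)\dd\rho$, and observe that for each $\rho \in (0, r]$ the inner spherical average of $\|\grad(\vx + \vu) - \nabla f_r(\vx)\|_2$ is exactly the quantity appearing in the definition of $\|\nabla f\|_{{\rm BMO}, r}$, hence is at most $L_r$; averaging over $\rho$ (the weights $\frac{d}{r^d}\rho^{d-1}$ integrate to $1$) preserves the bound $L_r$. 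Multiplying the two bounds gives $\ee[\|G(\vu)\|_2^2] \leq \widehat{L}_{2r} L_r$, as claimed.

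The only mild subtlety — and the step I would be most careful about — is the measurability/selection issue: $\grad$ is a fixed measurable selection, and I must make sure the pointwise bound $\|G(\vu)\|_2 \leq \widehat{L}_{2r}$ holds for almost every $\vu$ (which is fine since $\grad$ is defined almost everywhere by Rademacher's theorem, and the \gradBMV\ bound in Definition~\ref{def:bounded-max-loc-var} is stated for the chosen selection), and that Fubini applies when I swap the $\vu$ and $\vv$ expectations in the Jensen step. These are routine given local Lipschitzness, so no genuine obstacle arises; the proof is essentially the two-factor splitting plus the polar-coordinates identity already used in the proof of Lemma~\ref{lemma:sparse-smooth-approx-err}.
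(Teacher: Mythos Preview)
Your proposal is correct and follows essentially the same approach as the paper: bound one copy of $\|\grad(\vx+r\vu)-\nabla f_r(\vx)\|_2$ pointwise by $\widehat{L}_{2r}$ via Jensen and the \gradBMV\ definition, then bound the remaining first moment by $L_r$ using the polar-coordinate decomposition of the ball into spheres and the \gradBMO\ definition. The paper's proof is line-for-line the same two-step argument, so there is no meaningful difference to discuss.
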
}
\begin{proof}
    \markupadd{First, observe that for all $\vu \in \cb,$ $\|\grad(\vx + r\vu) - \nabla f_r(\vx)\|_2 \leq \widehat{L}_{2r},$ as a consequence of the definition of $f_r$, Jensen's inequality, and the definition of $\widehat{L}_{2r}.$ Thus,
    \begin{align*}
        &\; \ee_{\vu \sim \unif(\cb)}\big[\|\grad(\vx + r\vu) - \nabla f_r(\vx)\|_2^2\big]\\
         \leq\; & \widehat{L}_{2r} \ee_{\vu \sim \unif(\cb)}\big[\|\grad(\vx + r\vu) - \nabla f_r(\vx)\|_2\big]\\
         \leq\; & \widehat{L}_{2r} d\int_0^1 \mathbb{E}_{\vu\sim \unif({\cal S})}\big[\|\grad(\vx + \tau r \vu) - \nabla f_r(\vx)\|_2\big] \tau^{d-1} \dd\tau\\
         \leq\; & \widehat{L}_{2r} d\int_0^1 L_r \tau^{d-1} \dd\tau\\
         =\; & \widehat{L}_{2r} \frac{d}{d+1}L_r \leq \widehat{L}_{2r} L_r,
    \end{align*}
    where in the second inequality we used integration over polar coordinates and in the third we used $\mathbb{E}_{\vu\sim \unif({\cal S})}\big[\|\grad(\vx + \tau r \vu) - \nabla f_r(\vx)\|_2 \leq L_r,$ which holds for all $\tau \in [0, 1],$ by the definition of $L_r.$
    }
\end{proof}

\section{Optimization under Bounded Local Variation of the Subgradient}\label{sec:algorithms}

In this section, we discuss how to optimize functions with bounded local variation of the subgradient. We first provide bounds for convex optimization obtained using randomized smoothing and stochastic optimization methods applied to the smoothed function $f_r.$ We then provide alternative bounds based on Goldstein's method.

Throughout this section, we assume that there exists some finite radius $r_0$ for which the function $f$ is a \gradBMV\ function  (and thus is also a \gradBMO\ function). It is immediate by our definitions of \gradBMV\ and \gradBMO\ properties that both hold for any $r \in (0, r_0],$ possibly with smaller constants. For simplicity and as is standard, we  assume that $f$ is minimized by some $\vx_*$ in convex settings and bounded below by some $f_*$ in nonconvex settings. 

\subsection{Convex Optimization}

We begin this section by reviewing algorithm AGD+  \cite{cohen2018acceleration} (see also the ``method of similar triangles'' in \cite{gasnikov2018universal}), which we subsequently use to obtain convergence bounds under the \gradBMV\ property. For completeness, we provide full details of the analysis of AGD+, stated in slightly different terms than the original analysis from \cite{cohen2018acceleration} to make the application of those results more direct and suitable to our setting. 

\subsubsection{AGD+ and its Analysis}

Iterates of AGD+ applied to Euclidean, projection-based settings and for an arbitrary  estimate $\vg_k$ of $\grad(\vx_k)$ are defined by 
\begin{equation}\label{eq:AGD+}\tag{AGD+}
    \begin{aligned}
        \vx_k &= \frac{A_{k-1}}{A_k}\vy_{k-1} + \frac{a_k}{A_k} \proj_{\cx}(\vz_{k-1})\\
        \vz_k &= \vz_{k-1} - a_k \vg_k \\
        \vy_k &= \frac{A_{k-1}}{A_k}\vy_{k-1} + \frac{a_k}{A_k} \proj_{\cx}(\vz_{k}). 
    \end{aligned}
\end{equation}
We recall that $\vx_0 \in \cx$ is arbitrary and at initialization $\vy_0 = \proj_{\cx}(\vz_0),$ $\vz_0 = \vx_0 - a_0 \vg_0$. Recall also that $A_k = \sum_{i=0}^k a_i,$ where $a_i$, $i \geq 0,$ are positive step sizes. Denote $\vv_i := \proj_{\cx}(\vz_i).$ 

The analysis that we use here is slightly different than those in \cite{thegaptechnique, cohen2018acceleration}, in that we make the ``sources of error'' that constrain the convergence rate of the algorithm more explicit and suitable to our discussion in Section~\ref{sec:algorithms}. 
It is based on the approximate gap technique \cite{thegaptechnique}, which bounds a gap estimate $G_k(\vw) \geq f(\vy_k) - f(\vw)$, for $\vw \in \cx$. In particular, the argument constructs a lower bound on $f(\vw)$, $L_k(\vw) \leq f(\vw)$, and then bounds ``error terms'' $E_k,$ which satisfy $E_0 \geq A_0 G_0 - \frac{1}{2}\|\vw - \vx_0\|_2^2$ and  $E_k \geq A_{k}G_k - A_{k-1} G_{k-1}.$ Then the optimality gap is simply bounded using 
\begin{equation}\label{eq:gap-generic-bnd}
    f(\vy_k) - f(\vw) \leq G_k(\vw) \leq \frac{(1/2)\|\vw - \vx_0\|_2^2 + \sum_{i=0}^k E_i}{A_k}
\end{equation}
and we draw inferences about the convergence by choosing the sequence $A_k$ to ensure the right-hand side of \eqref{eq:gap-generic-bnd} decays as fast as possible with $k.$ %

Throughout this section, we let $\vw \in \cx$ be arbitrary but fixed. In particular, $\vw$ can be chosen as a minimizer of $f$, in which case \eqref{eq:gap-generic-bnd} bounds the optimality gap. For notational simplicity, we write $G_k, L_k$ to mean $G_k(\vw),$ $L_k(\vw),$ as the context is clear. 

We now define the ``error terms'' $E_k,$ $k \geq 0,$ that appear in the analysis. There are three main sources of error: (i) $E_{k}^s$ corresponding to smoothness of $f$ (less than or equal to zero if $f$ is smooth and step sizes are appropriately chosen), (ii) $E_k^b,$ corresponding to the bias of gradient estimates $\vg_k,$ and (iii) $E_k^v$ related to the variance of gradient estimates $\vg_k.$ We define them as follows and note that they are directly obtained from the subsequent analysis.
\begin{equation}\label{eq:error-terms-def}
    \begin{aligned}
        E_k^s &:= A_k\Big(f(\vy_k) - f(\vx_k) - \innp{\grad(\vx_k), \vy_k - \vx_k}  - \frac{A_k}{2{a_k}^2}\|\vy_k - \vx_k\|_2^2\Big)\\
        E_k^b &= E_k^b(\vw) := a_k \innp{\vg_k - \grad(\vx_k), \vw - \vx_k}\\
        E_k^v &:= a_k \innp{\grad(\vx_k)- \vg_k, \vv_k - \vx_k},\\
        E_k &:= E_k^s + E_k^b + E_k^v.
    \end{aligned}
\end{equation}

We begin by constructing the gap estimates $G_k,$ in the following proposition.

\begin{proposition}\label{prop:G_k-construction}
    Let $f: \rr^d \to \rr$ be a proper convex continuous function and let $\vx_i, \vy_i, \vz_i$ for $i \geq 0$ be the iterates of \eqref{eq:AGD+}.  %
    Let $\vw \in \cx$ be arbitrary. 
    Then for all $k \geq 0,$ $f(\vy_k) - f(\vw) \leq G_k,$ where
    \begin{equation}\label{eq:G_k-def}
    \begin{aligned}
        G_k :=\; & f(\vy_k) - \frac{1}{A_k}\sum_{i=0}^k a_i \big(f(\vx_i) + \innp{\vg_i, \vv_k - \vx_i} \big) + \frac{1}{A_k}\sum_{i=0}^k E_i^b\\
        &- \frac{1}{2A_k}\|\vv_k - \vx_0\|_2^2 + \frac{1}{2A_k}\|\vw - \vx_0\|_2^2
    \end{aligned}
    \end{equation}
\end{proposition}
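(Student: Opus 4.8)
The plan is to cancel the common $f(\vy_k)$ term and reduce the claim to a pure lower-bound statement. Since $G_k$ in \eqref{eq:G_k-def} begins with $f(\vy_k)$, establishing $f(\vy_k) - f(\vw) \le G_k$ is equivalent to showing that
\begin{align*}
  L_k(\vw) := f(\vy_k) - G_k = {}& \frac{1}{A_k}\sum_{i=0}^k a_i\big(f(\vx_i) + \innp{\vg_i, \vv_k - \vx_i}\big) - \frac{1}{A_k}\sum_{i=0}^k E_i^b\\
  & + \frac{1}{2A_k}\norm{\vv_k - \vx_0}_2^2 - \frac{1}{2A_k}\norm{\vw - \vx_0}_2^2
\end{align*}
is a valid lower bound on $f(\vw)$; note that no property of the averaging iterate $\vy_k$ enters here. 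First I would substitute $E_i^b = a_i\innp{\vg_i - \grad(\vx_i), \vw - \vx_i}$ from \eqref{eq:error-terms-def} and regroup the linear terms using $\innp{\vg_i, \vv_k - \vx_i} - \innp{\vg_i, \vw - \vx_i} = \innp{\vg_i, \vv_k - \vw}$, obtaining
\begin{align*}
  A_k L_k(\vw) = {}& \sum_{i=0}^k a_i\big(f(\vx_i) + \innp{\grad(\vx_i), \vw - \vx_i}\big)\\
  & + \innp{\sum_{i=0}^k a_i \vg_i,\; \vv_k - \vw} + \tfrac12\norm{\vv_k - \vx_0}_2^2 - \tfrac12\norm{\vw - \vx_0}_2^2 .
\end{align*}

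Next I would invoke convexity of $f$: since $\grad(\vx_i) \in \partial f(\vx_i)$, we have $f(\vx_i) + \innp{\grad(\vx_i), \vw - \vx_i} \le f(\vw)$ for each $i$, so the first sum is at most $A_k f(\vw)$. It then suffices to prove the algebraic inequality
\[
  \innp{\sum_{i=0}^k a_i \vg_i,\; \vv_k - \vw} + \tfrac12\norm{\vv_k - \vx_0}_2^2 - \tfrac12\norm{\vw - \vx_0}_2^2 \le 0 .
\]
For this, I would use the telescoped identity $\sum_{i=0}^k a_i \vg_i = \vx_0 - \vz_k$, which follows from the recursion $\vz_k = \vz_{k-1} - a_k\vg_k$ in \eqref{eq:AGD+} together with the initialization $\vz_0 = \vx_0 - a_0\vg_0$. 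Substituting this and expanding both $\norm{\vv_k - \vx_0}_2^2$ and $\norm{\vw - \vx_0}_2^2$ around $\vz_k$ (that is, $\norm{\vu - \vx_0}_2^2 = \norm{\vu - \vz_k}_2^2 + 2\innp{\vu - \vz_k, \vz_k - \vx_0} + \norm{\vz_k - \vx_0}_2^2$ for $\vu \in \{\vv_k, \vw\}$), all the cross terms cancel and the inequality reduces to $\norm{\vv_k - \vz_k}_2^2 \le \norm{\vw - \vz_k}_2^2$. Since $\vv_k = \proj_{\cx}(\vz_k)$ is by definition the point of $\cx$ nearest to $\vz_k$, and $\vw \in \cx$, this holds, which completes the argument.

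The only genuinely substantive point --- as opposed to sign/term bookkeeping --- is recognizing that $\vv_k = \proj_{\cx}(\vz_k)$ is exactly the constrained minimizer over $\cx$ of the regularized aggregated lower model $\vu \mapsto \sum_{i=0}^k a_i\big(f(\vx_i) + \innp{\vg_i, \vu - \vx_i}\big) + \tfrac12\norm{\vu - \vx_0}_2^2$ (equivalently, up to an additive constant, of $\tfrac12\norm{\vu - \vz_k}_2^2$), which is precisely what allows the bias-correction terms $E_i^b$ to absorb the discrepancy between the true subgradients $\grad(\vx_i)$ and the oracle estimates $\vg_i$. I expect everything else to be routine. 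It is worth emphasizing that neither the smoothness-type error $E_k^s$ nor the variance-type error $E_k^v$ from \eqref{eq:error-terms-def} is used in this proposition; these, together with the step-size conditions, enter only in the later estimate $A_k G_k - A_{k-1}G_{k-1} \le E_k$ that converts \eqref{eq:G_k-def} into a convergence rate through \eqref{eq:gap-generic-bnd}.
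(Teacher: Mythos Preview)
Your proposal is correct and follows essentially the same approach as the paper: both define $L_k = f(\vy_k) - G_k$, use convexity to replace the affine minorants $f(\vx_i) + \innp{\grad(\vx_i), \vw - \vx_i}$ by $f(\vw)$, and then use that $\vv_k = \proj_{\cx}(\vz_k)$ minimizes $\vu \mapsto \sum_i a_i\innp{\vg_i, \vu - \vx_i} + \tfrac12\|\vu - \vx_0\|_2^2$ over $\cx$. The only cosmetic difference is that the paper invokes this minimization property directly to pass from $\vw$ to $\vv_k$ in one step, whereas you unwind it by substituting $\sum_i a_i\vg_i = \vx_0 - \vz_k$ and expanding the quadratics around $\vz_k$ to reduce to the nearest-point inequality $\|\vv_k - \vz_k\|_2 \le \|\vw - \vz_k\|_2$; these are the same fact.
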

\begin{proof}
    Define $G_k := f(\vy_k) - L_k,$ where $L_k \leq f(\vw).$ To carry out the proof, we then just need to construct $L_k$ that bounds $f(\vw)$ below and agrees with the expression from the statement of the proposition. 
    First, by convexity of $f$ and $A_k = \sum_{i=0}^k a_i,$ we have
    \begin{align*}
        f(\vw) \geq \;& \frac{1}{A_k}\sum_{i=0}^k a_i \big(f(\vx_i) + \innp{\grad(\vx_i), \vw - \vx_i}\big)\\
        =\; &  \frac{1}{A_k}\sum_{i=0}^k a_i \big(f(\vx_i) + \innp{\grad(\vx_i) - \vg_i, \vw - \vx_i}\big) + \frac{1}{A_k}\sum_{i=0}^k a_i \innp{\vg_i, \vw - \vx_i}\\
        &+ \frac{1}{2A_k}\|\vw - \vx_0\|_2^2 - \frac{1}{2A_k}\|\vw - \vx_0\|_2^2\\
        \geq &\; \frac{1}{A_k}\sum_{i=0}^k a_i \big(f(\vx_i) + \innp{\grad(\vx_i) - \vg_i, \vw - \vx_i}\big) + \frac{1}{A_k}\sum_{i=0}^k a_i \innp{\vg_i, \vv_k - \vx_i}\\
        &+ \frac{1}{2A_k}\|\vv_k - \vx_0\|_2^2 - \frac{1}{2A_k}\|\vw - \vx_0\|_2^2
        =: L_k,
    \end{align*}
    where in the last inequality we used the definition of $\vv_k := \proj_{\cx}(\vz_k)$ and $\vz_k = \vx_0 - \sum_{i=0}^k a_i \vg_i$ (by its definition). It remains to use the definition of $E_i^b$ and $G_k = f(\vy_k) - L_k.$ 
\end{proof}

We now formally prove that the error sequences defined in \eqref{eq:error-terms-def} satisfy the requirement that $E_0 \geq A_0 G_0 - \frac{1}{2}\|\vw - \vx_0\|_2^2$ and  $E_k \geq A_{k}G_k - A_{k-1} G_{k-1},$ which immediately implies \eqref{eq:gap-generic-bnd}. Obtaining a convergence bound for \eqref{eq:AGD+} then reduces to bounding the individual error terms in \eqref{eq:error-terms-def} using assumptions about $f$ and the gradient estimates $\vg_k.$

\begin{theorem}\label{thm:generic-AGD+-bound}
    Consider iterates of \eqref{eq:AGD+} for $k \geq 0$ and let $G_k$ be defined via \eqref{eq:G_k-def} and $E_k$ be defined via \eqref{eq:error-terms-def}. Then for any $\vw \in \cx,$ 
    \begin{equation}\notag
    \begin{aligned}
        A_0 G_0 &\leq \frac{1}{2}\|\vw - \vx_0\|_2^2 + E_0,\\
        A_{k}G_k - A_{k-1}G_{k-1} &\leq E_k, \;\text{ for }\; k \geq 1.
    \end{aligned}
    \end{equation}
    As a consequence, for all $k \geq 0,$
    \begin{equation}\notag
        f(\vy_k) - f(\vw) \leq \frac{(1/2)\|\vw - \vx_0\|_2^2 + \sum_{i=0}^k E_i}{A_k}.
    \end{equation}
\end{theorem}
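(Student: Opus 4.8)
The plan is to verify the two inductive inequalities $A_0 G_0 \le \tfrac12\|\vw - \vx_0\|_2^2 + E_0$ and $A_k G_k - A_{k-1}G_{k-1} \le E_k$ for $k \ge 1$, since the final telescoping bound on $f(\vy_k) - f(\vw)$ follows immediately by summing over $i = 0, \dots, k$ together with the bound $f(\vy_k) - f(\vw) \le G_k$ from Proposition~\ref{prop:G_k-construction} and the positivity of $A_k$. So the whole proof reduces to these two estimates.

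\textbf{The base case.} First I would write out $A_0 G_0$ using the definition \eqref{eq:G_k-def} with $k=0$: since $A_0 = a_0$ and $\vv_0 = \proj_\cx(\vz_0)$, this gives $A_0 G_0 = a_0 f(\vy_0) - a_0(f(\vx_0) + \innp{\vg_0, \vv_0 - \vx_0}) + E_0^b - \tfrac12\|\vv_0 - \vx_0\|_2^2 + \tfrac12\|\vw - \vx_0\|_2^2$. The terms $E_0^b$ and $\tfrac12\|\vw-\vx_0\|_2^2$ can be moved to the right side, so it suffices to show $a_0 f(\vy_0) - a_0 f(\vx_0) - a_0\innp{\vg_0, \vv_0 - \vx_0} - \tfrac12\|\vv_0 - \vx_0\|_2^2 \le E_0^s + E_0^v$. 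Here I would use the definitions of $E_0^s$ and $E_0^v$ from \eqref{eq:error-terms-def}: $E_0^s = a_0(f(\vy_0) - f(\vx_0) - \innp{\grad(\vx_0), \vy_0 - \vx_0} - \tfrac{1}{2}\|\vy_0 - \vx_0\|_2^2)$ (using $A_0 = a_0$) and $E_0^v = a_0\innp{\grad(\vx_0) - \vg_0, \vv_0 - \vx_0}$. Adding these and comparing, the $f(\vy_0), f(\vx_0)$ terms match, the $\innp{\grad(\vx_0), \cdot}$ terms combine with $E_0^v$ to produce $-a_0\innp{\vg_0, \vv_0 - \vx_0} - a_0\innp{\grad(\vx_0), \vy_0 - \vx_0 - (\vv_0 - \vx_0)}$; since $\vy_0 = \vv_0$ at initialization, the extra term vanishes, and $\tfrac12\|\vy_0 - \vx_0\|_2^2 = \tfrac12\|\vv_0 - \vx_0\|_2^2$. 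So the base case is an identity once $\vy_0 = \vv_0$ is used.

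\textbf{The inductive step.} This is the main computation. Using $A_k G_k = A_k f(\vy_k) - \sum_{i=0}^k a_i(f(\vx_i) + \innp{\vg_i, \vv_k - \vx_i}) + \sum_{i=0}^k E_i^b - \tfrac12\|\vv_k - \vx_0\|_2^2 + \tfrac12\|\vw - \vx_0\|_2^2$ and subtracting the analogous expression for $A_{k-1}G_{k-1}$, the $\tfrac12\|\vw-\vx_0\|_2^2$ cancels and the $E_i^b$ sum collapses to $E_k^b$. The surviving terms are $A_k f(\vy_k) - A_{k-1} f(\vy_{k-1})$, the difference of the two weighted linear-lower-bound sums evaluated at the different points $\vv_k$ and $\vv_{k-1}$, and $-\tfrac12\|\vv_k - \vx_0\|_2^2 + \tfrac12\|\vv_{k-1} - \vx_0\|_2^2$. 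The cleanest route is to rewrite the sum appearing in $A_{k-1}G_{k-1}$ replacing $\vv_{k-1}$ by $\vv_k$ at the cost of a prox-inequality term: since $\vv_{k-1} = \proj_\cx(\vz_{k-1})$ and $\vz_{k-1} = \vx_0 - \sum_{i=0}^{k-1} a_i \vg_i$, firm nonexpansiveness of the projection (equivalently the three-point/Pythagorean inequality for projections) gives $\innp{\vx_0 - \sum_{i=0}^{k-1}a_i\vg_i - \vv_{k-1}, \vv_k - \vv_{k-1}} \le 0$, i.e. $-\sum_{i=0}^{k-1} a_i\innp{\vg_i, \vv_k - \vv_{k-1}} \le \innp{\vv_{k-1} - \vx_0, \vv_k - \vv_{k-1}} = \tfrac12\|\vv_k - \vx_0\|_2^2 - \tfrac12\|\vv_{k-1} - \vx_0\|_2^2 - \tfrac12\|\vv_k - \vv_{k-1}\|_2^2$. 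Substituting this converts the difference of the two lower-bound sums into terms all evaluated against $\vv_k$, plus a leftover $-\tfrac12\|\vv_k - \vv_{k-1}\|_2^2$, and the $\|\vv_k - \vx_0\|_2^2$ pieces cancel against those already present. What remains is to show
\begin{equation}\notag
A_k f(\vy_k) - A_{k-1}f(\vy_{k-1}) - a_k\big(f(\vx_k) + \innp{\vg_k, \vv_k - \vx_k}\big) - \tfrac12\|\vv_k - \vv_{k-1}\|_2^2 \le E_k^s + E_k^v,
\end{equation}
and here I would invoke the update rules: $\vy_k - \vx_k = \tfrac{a_k}{A_k}(\vv_k - \vv_{k-1})$ (from the $\vx_k$ and $\vy_k$ lines of \eqref{eq:AGD+}), and $\vx_k = \tfrac{A_{k-1}}{A_k}\vy_{k-1} + \tfrac{a_k}{A_k}\vv_{k-1}$, so that $A_{k-1}\vy_{k-1} = A_k \vx_k - a_k \vv_{k-1}$ and hence $A_{k-1}f(\vy_{k-1}) \ge A_{k-1}f(\vx_k) + \innp{\grad(\vx_k), A_{k-1}\vy_{k-1} - A_{k-1}\vx_k}$ by convexity, which rewrites as $A_{k-1}f(\vx_k) + a_k\innp{\grad(\vx_k), \vx_k - \vv_{k-1}}$. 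Plugging this in collapses the $f(\vx_k)$ coefficients to $a_k$, leaves $a_k\innp{\grad(\vx_k), \vv_k - \vv_{k-1}}$ terms, and after substituting $\tfrac12\|\vv_k - \vv_{k-1}\|_2^2 = \tfrac{A_k^2}{2a_k^2}\|\vy_k - \vx_k\|_2^2$ the remaining inequality is exactly $A_k(f(\vy_k) - f(\vx_k) - \innp{\grad(\vx_k), \vy_k - \vx_k} - \tfrac{A_k}{2a_k^2}\|\vy_k - \vx_k\|_2^2) + a_k\innp{\grad(\vx_k) - \vg_k, \vv_k - \vx_k} = E_k^s + E_k^v$, an identity.

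\textbf{Main obstacle.} The routine-but-delicate part is bookkeeping the projection step correctly: getting the prox/three-point inequality in the exact form that cancels the $\|\vv_k - \vx_0\|_2^2$ and $\|\vv_{k-1} - \vx_0\|_2^2$ terms while producing precisely the $-\tfrac12\|\vv_k - \vv_{k-1}\|_2^2$ needed to match $\tfrac{A_k^2}{2a_k^2}\|\vy_k - \vx_k\|_2^2$ via the identity $\vy_k - \vx_k = \tfrac{a_k}{A_k}(\vv_k - \vv_{k-1})$. Once the substitutions $\vz_k = \vx_0 - \sum_{i=0}^k a_i\vg_i$, the three update rules, and convexity of $f$ at $\vx_k$ are lined up, everything reduces to the algebraic identities defining $E_k^s, E_k^b, E_k^v$, so there is no genuine analytic difficulty — only careful tracking of which point ($\vv_k$ vs.\ $\vv_{k-1}$, $\vy_k$ vs.\ $\vx_k$) each term is anchored at.
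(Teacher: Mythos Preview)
Your proposal is correct and follows essentially the same approach as the paper: both arguments use convexity of $f$ at $\vx_k$, the update relation $\vy_k - \vx_k = \tfrac{a_k}{A_k}(\vv_k - \vv_{k-1})$, and the projection/strong-convexity inequality for $\vv_{k-1}$ to produce the $-\tfrac12\|\vv_k - \vv_{k-1}\|_2^2$ term. The only cosmetic difference is that the paper packages the projection step by introducing the auxiliary function $m_k(\vx) = \sum_{i=0}^k a_i\innp{\vg_i,\vx-\vx_i} + \tfrac12\|\vx-\vx_0\|_2^2$ and invoking its $1$-strong convexity at its constrained minimizer $\vv_k$, whereas you unroll the same inequality directly as the three-point identity for $\proj_{\cx}(\vz_{k-1})$.
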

\begin{proof}
    We start with bounding $A_0 G_0$, using its definition and $a_0 = A_0.$ We have
    \begin{align*}
        A_0 G_0 - \frac{1}{2}\|\vw - \vx_0\|_2^2 =\; & A_0(f(\vy_0) - f(\vx_0) - \innp{\vg_0, \vv_0 - \vx_0}) - \frac{1}{2}\|\vv_0 - \vx_0\|_2^2 + E_0^b\\
        =\; & A_0\Big(f(\vy_0) - f(\vx_0) - \innp{\grad(\vx_0), \vy_0 - \vx_0} - \frac{A_0}{2 {a_{0}}^2}\|\vy_0 - \vx_0\|_2^2\Big)\\
        &+ a_0 \innp{\grad(\vx_0) - \vg_0, \vv_0 - \vx_0}  + E_0^b\\
        =\; & E_0, 
    \end{align*}
    where we have used the definitions of error terms from \eqref{eq:error-terms-def}, $\vv_0 = \vy_0$, and $a_0 = A_0.$ 

    For the second inequality in the statement of the theorem, define 
    \[m_k(\vx) := \sum_{i=0}^k a_i \innp{\vg_i, \vx - \vx_i} + \frac{1}{2}\|\vx - \vx_0\|_2^2.\] 
    Then $A_k G_k - A_{k-1} G_{k-1}$ can be written as
    \begin{equation}\label{eq:E_k-bnd-1}
    \begin{aligned}
        A_k G_k - A_{k-1} G_{k-1} =\; & A_{k}f(\vy_k) - A_{k-1}f(\vy_{k-1}) - a_k f(\vx_k)\\
        &- m_k(\vv_k) + m_{k-1}(\vv_{k-1}) + E_k^b.
    \end{aligned} 
    \end{equation}
    Observe that $m_k$ is 1-strongly convex and minimized by $\vv_k,$ by the definition of $\vv_k = \proj_{\cx}(\vz_k).$ Thus, using its definition, we have
    \begin{align}
        m_k(\vv_k) - m_{k-1}(\vv_{k-1}) &= a_k \innp{\vg_k, \vv_k - \vx_k} + m_{k-1}(\vv_k) - m_{k-1}(\vv_{k-1}) \notag\\
        &\geq a_k \innp{\vg_k, \vv_k - \vx_k} + \frac{1}{2}\|\vv_k - \vv_{k-1}\|_2^2\notag\\
        &= a_k \innp{\grad(\vx_k), \vv_k - \vx_k} + \frac{1}{2}\|\vv_k - \vv_{k-1}\|_2^2 - E_k^v, \label{eq:E_k-bnd-2}
    \end{align}
    where the inequality holds because $m_{k-1}$ is 1-strongly convex and minimized by $\vv_{k-1}$ and the last line is by the definition of $E_k^v$. 
    On the other hand, by convexity of $f,$ we have
    \begin{equation}\label{eq:E_k-bnd-3}
        f(\vy_{k-1}) \geq f(\vx_k) + \innp{\grad(\vx_k), \vy_{k-1} - \vx_k}.
    \end{equation}
    Finally, plugging \eqref{eq:E_k-bnd-2} and \eqref{eq:E_k-bnd-3} back into \eqref{eq:E_k-bnd-1} and using that $\vy_k$ can be expressed as $\vy_k = \frac{A_{k-1}}{A_k} \vy_{k-1} + \frac{a_k}{A_k}\vv_k = \vx_k + \frac{a_k}{A_k}(\vv_k - \vv_{k-1})$ (from \eqref{eq:AGD+}), we get
    \begin{align*}
        &\; A_k G_k - A_{k-1}G_{k-1}\\
        \leq \; & E_k^b + E_k^v + A_k(f(\vy_k) - f(\vx_k) - \innp{\grad(\vx_k), \vy_k - \vx_k}) - \frac{1}{2}\|\vv_k - \vv_{k-1}\|_2^2 \\
        =\; & E_k^b + E_k^v + A_k\Big(f(\vy_k) - f(\vx_k) - \innp{\grad(\vx_k), \vy_k - \vx_k} - \frac{A_k}{{a_k}^2}\|\vy_k - \vx_{k}\|_2^2\Big)  \\
        =\; & E_k.
    \end{align*}
    The last inequality in the theorem statement follows immediately from $f(\vy_k) - f(\vw) \leq G_k$ (by Proposition~\ref{prop:G_k-construction}) and the first two inequalities in the theorem statement. 
\end{proof}

\subsubsection{Approximately Smooth Minimization}\label{sec:approx-smooth-opt}

We now show how to directly apply AGD+ to $f,$ with $\vg_k = \grad(\vx_k)$. In this case, clearly, $E_k^b = E_k^v = 0,$ $\forall k \geq 0,$ so to use the result from Theorem~\ref{thm:generic-AGD+-bound} to get concrete complexity bounds, we need to bound $E_k^s,$ for $k \geq 0.$ We do so using the \gradBMV\ property\markupadd{, based on the upper quadratic function approximation from Lemma \ref{lem:upper-quadratic-approx}}. \markupdelete{Our analysis in this case is based on interpreting the subgradient of $f$ as an inexact oracle for a smooth function. While this is a known idea in convex optimization \cite{nesterov2015universal,Devolder:2014}, we proceed differently from previous works when the points of interest lie further apart. Here we partition the line segment joining these two points and apply the \gradBMV\ property in each of these intervals. Aggregating these bounds then provides a sharper quadratic upper bound. }

\begin{lemma}\label{lemma:approx-smoothness-via-BMLV}
    Let $f: \rr^d \to \rr$ be \gradBMV\ at radius $r_0 > 0.$ For $r \in (0, r_0],$ let $\Llocr$ be the constant of maximum local variation of the subgradient of $f.$ Consider applying \eqref{eq:AGD+} to $f,$ initialized at an arbitrary $\vx_0 \in \cx.$ If for all $k \geq 0,$ $\frac{{a_k}^2}{A_k} \leq \frac{r}{\Llocr},$ then $E_k^s \leq \frac{{a_k}^2\Llocr^2}{2},$ $\forall k \geq 0.$
\end{lemma}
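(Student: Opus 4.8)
The plan is to invoke Lemma~\ref{lem:upper-quadratic-approx} to control the quantity $f(\vy_k) - f(\vx_k) - \innp{\grad(\vx_k), \vy_k - \vx_k}$ appearing in the definition of $E_k^s$, and then to argue that the quadratic penalty term $\frac{A_k}{2 a_k^2}\|\vy_k - \vx_k\|_2^2$ absorbs the resulting bound down to the desired $\frac{a_k^2 \Llocr^2}{2}$. The key structural fact is that, by the \eqref{eq:AGD+} recursion, $\vy_k - \vx_k = \frac{a_k}{A_k}(\vv_k - \vv_{k-1})$, so $\|\vy_k - \vx_k\|_2 = \frac{a_k}{A_k}\|\vv_k - \vv_{k-1}\|_2$; I would not need the precise value, only that $\vy_k - \vx_k$ is an arbitrary vector whose norm we denote $s := \|\vy_k - \vx_k\|_2$.

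First I would split into the two cases of Lemma~\ref{lem:upper-quadratic-approx}, applied with $\vx = \vx_k$, $\vy = \vy_k$, and the radius $r$ from the hypothesis. In the case $s > r$, the lemma gives $f(\vy_k) - f(\vx_k) - \innp{\grad(\vx_k),\vy_k-\vx_k} \leq \frac{\Llocr}{2r}s^2$, so
\begin{align*}
E_k^s &\leq A_k\Big(\frac{\Llocr}{2r} - \frac{A_k}{2 a_k^2}\Big) s^2.
\end{align*}
By the step-size hypothesis $\frac{a_k^2}{A_k} \leq \frac{r}{\Llocr}$, i.e. $\frac{A_k}{a_k^2} \geq \frac{\Llocr}{r} \geq \frac{\Llocr}{2r}$, the bracket is nonpositive, hence $E_k^s \leq 0 \leq \frac{a_k^2 \Llocr^2}{2}$ and this case is done. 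In the case $s \leq r$, the lemma gives the linear bound $f(\vy_k) - f(\vx_k) - \innp{\grad(\vx_k),\vy_k-\vx_k} \leq \Llocr s$, so
\begin{align*}
E_k^s &\leq A_k\Big(\Llocr s - \frac{A_k}{2 a_k^2} s^2\Big).
\end{align*}
Here I would maximize the right-hand side over $s \geq 0$ (an unconstrained scalar quadratic), whose maximizer is $s^\star = \frac{a_k^2 \Llocr}{A_k}$ with maximal value $A_k \cdot \frac{a_k^2 \Llocr^2}{2 A_k} = \frac{a_k^2 \Llocr^2}{2}$; this gives exactly the claimed bound. (One should note $s^\star = \frac{a_k^2}{A_k}\Llocr \leq r$ by the hypothesis, so the maximizer is actually attainable within this case's range, though the bound holds regardless since we are upper-bounding.)

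The main obstacle — really the only subtlety — is making sure the two cases are stitched together correctly: in the $s \le r$ branch we must not accidentally use the wrong quadratic coefficient, and in the $s > r$ branch we must check that the step-size condition makes the net coefficient of $s^2$ nonpositive even with the factor-of-two discrepancy between $\frac{\Llocr}{2r}$ and $\frac{\Llocr}{r}$; it does, comfortably. Everything else is a one-line scalar optimization. I would therefore conclude that in both cases $E_k^s \le \frac{a_k^2 \Llocr^2}{2}$ for all $k \ge 0$, as claimed.
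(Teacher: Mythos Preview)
Your proposal is correct and follows essentially the same approach as the paper: invoke Lemma~\ref{lem:upper-quadratic-approx}, split into the two cases $\|\vy_k-\vx_k\|_2\le r$ and $\|\vy_k-\vx_k\|_2>r$, and in the first case use a scalar quadratic maximization (the paper phrases this as Young's inequality) while in the second case use the step-size condition to make the coefficient of $s^2$ nonpositive. The aside about $\vy_k-\vx_k=\frac{a_k}{A_k}(\vv_k-\vv_{k-1})$ is unnecessary, as you yourself note.
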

\begin{proof}
    Fix any $k \geq 0$ \markupadd{and recall that, by definition,
    \begin{equation}\notag
        E_k^s = A_k\Big(f(\vy_k) - f(\vx_k) - \innp{\grad(\vx_k), \vy_k - \vx_k}  - \frac{A_k}{2{a_k}^2}\|\vy_k - \vx_k\|_2^2\Big). 
    \end{equation}
    If $\|\vy_k - \vx_k\|_2 \leq r,$ then by Lemma \ref{lem:upper-quadratic-approx}, $f(\vy_k) - f(\vx_k) - \innp{\grad(\vx_k), \vy_k - \vx_k} \leq \Llocr\|\vy_k - \vx_k\|_2,$ and we can conclude that
    \begin{align*}
        E_k^s \leq \;& A_k\Big(\Llocr\|\vy_k - \vx_k\|_2  - \frac{A_k}{2{a_k}^2}\|\vy_k - \vx_k\|_2^2\Big)
        \leq \frac{{a_k}^2 \Llocr^2}{2},
    \end{align*}
    by Young's inequality. Thus the lemma claim holds in this case.
    }

    \markupadd{Now consider the remaining case that $\|\vy_k - \vx_k\|_2 > r.$ Then, by Lemma \ref{lem:upper-quadratic-approx}, $f(\vy_k) - f(\vx_k) - \innp{\grad(\vx_k), \vy_k - \vx_k} \leq \frac{\Llocr}{2r}\|\vy_k - \vx_k\|_2^2,$ and we conclude that
    \begin{align*}
         E_k^s \leq \;& A_k\Big(\frac{\Llocr}{2r}\|\vy_k - \vx_k\|_2^2  - \frac{A_k}{2{a_k}^2}\|\vy_k - \vx_k\|_2^2\Big) \leq 0,
    \end{align*}
    as $\frac{{a_k}^2}{A_k} \leq \frac{r}{\Llocr},$ by the lemma assumption. 
    }
    \markupdelete{Since $f$ is absolutely continuous, by the first theorem of calculus:
\begin{equation}\label{eq:yk-xk-FTC}
\begin{aligned}
    \; &f(\vy_k) - f(\vx_k) - \innp{\grad(\vx_k), \vy_k - \vx_k}\\
    =\; & \int_{0}^1 \innp{\grad(\vx_k + t(\vy_k - \vx_k)) - \grad(\vx_k), \vy_k - \vx_k} \dd t.
\end{aligned}
\end{equation}
If $\|\vy_k - \vx_k\|_2 \leq r,$ the integral on the right-hand side of \eqref{eq:yk-xk-FTC} is bounded by $\Llocr\|\vy_k - \vx_k\|_2$. Recalling the definition of $E_k^s$ from \eqref{eq:error-terms-def}, we have that in this case
\begin{align}\notag
    E_k^s = A_k\Big(\Llocr\|\vy_k - \vx_k\|_2 - \frac{A_k}{2{a_k}^2}\|\vy_k - \vx_k\|_2^2\Big) \leq  \frac{{a_k}^2\Llocr^2}{2}, 
\end{align}
as $\Llocr\|\vy_k - \vx_k\|_2 \leq \frac{{a_k}^2\Llocr^2}{2A_k} + \frac{A_k}{2{a_k}^2}\|\vy_k - \vx_k\|_2^2,$ by Young's inequality.} 
\markupdelete{
Now consider the case that $\|\vy_k - \vx_k\|_2 > r.$ Let $m = \lfloor \frac{\|\vy_k - \vx_k\|_2}{r} \rfloor + 1$. Then, we have
\begin{align*}
   \; & \int_{0}^1 \innp{\grad(\vx_k + t(\vy_k - \vx_k)) - \grad(\vx_k), \vy_k - \vx_k} \dd t\\
   =\; & \sum_{i = 1}^m \int_{(i-1)/m}^{i/ m} \innp{\grad(\vx_k + t(\vy_k - \vx_k)) - \grad(\vx_k), \vy_k - \vx_k} \dd t. 
\end{align*}
We now bound each of the integrals as follows:
\begin{align*}
   &\; \int_{(i-1)/m}^{i/ m} \innp{\grad(\vx_k + t(\vy_k - \vx_k)) - \grad(\vx_k), \vy_k - \vx_k} \dd t \\
   =\; & \int_{(i-1)/m}^{i/ m} \innp{\grad(\vx_k + t(\vy_k - \vx_k)) - \grad(\vx_k + (i-1)/m (\vy_k - \vx_k)), \vy_k - \vx_k} \dd t \\
   &+ \frac{1}{m}\sum_{j= 1}^{i-1} \innp{\grad(\vx_k + j/m (\vy_k - \vx_k)) - \grad(\vx_j + (j-1)/m (\vy_k - \vx_k)), \vy_k - \vx_k}\\
   &\; \leq \frac{1}{m}\Llocr \|\vy_k - \vx_k\|_2 + \frac{i-1}{m} \Llocr \|\vy_k - \vx_k\|_2 = \frac{i}{m} \Llocr\|\vy_k - \vx_k\|_2.
\end{align*}
Now summing over $i$ and plugging back into \eqref{eq:yk-xk-FTC}, we finally get
\begin{align*}
    f(\vy_k) - f(\vx_k) - \innp{\grad(\vx_k), \vy_k - \vx_k} &\leq \Llocr \|\vy_k - \vx_k\|_2 \sum_{i=1}^m \frac{i}{m}\\
    &\leq \Llocr \frac{m-1}{2}\|\vy_k - \vx_k\|_2 \\
    &\leq \frac{\Llocr}{2 r}\|\vy_k - \vx_k\|_2^2,
\end{align*}
where in the last inequality we used $m -1 = \lfloor \frac{\|\vy_k - \vx_k\|_2}{r} \rfloor \leq \frac{\|\vy_k - \vx_k\|_2}{r},$ by our choice of $m.$ Recalling the definition of $E_k^s$ from \eqref{eq:error-terms-def}, in this case clearly $E_k^s \leq 0$ whenever $\frac{{a_k}^2}{A_k} \leq \frac{r}{\Llocr}.$ }
\end{proof}

\begin{corollary}\label{cor:approx-smoothing-deterministic}
    Let $f: \rr^d \to \rr$ be a \gradBMV\ function at radius $r_0 > 0$ and assume $f$ is minimized by some $\vx_* \in \rr^d.$ For $r \in (0, r_0],$ let $\Llocr$ be the \gradBMV\ of $f.$ Consider applying \eqref{eq:AGD+} to $f,$ initialized at an arbitrary $\vx_0 \in \cx.$ If for all $k \geq 0,$ $\frac{{a_k}^2}{A_k} \leq \frac{r}{\Llocr},$ then $\forall k \geq 0,$
    \begin{equation}\notag
        f(\vy_k) - f(\vx_*) \leq \frac{\|\vx_* - \vx_0\|_2^2 + \sum_{i=0}^k {a_i}^2 \Llocr^2}{2A_k}.
    \end{equation}
    In particular, for any $\epsilon > 0,$ any $r \in (0, r_0]$ and the corresponding constant $\Llocr,$ there exists a choice of the step sizes $a_k \geq 0$ such that $f(\vy_k) - f(\vx_*) \leq \epsilon$ after 
    \begin{equation}\label{eq:approx-smoothing-complexity}
        k = O\bigg(\sqrt{\frac{\Llocr}{r \epsilon}}\|\vx_* - \vx_0\|_2 + \frac{\Llocr^2 \|\vx_* - \vx_0\|_2^2}{\epsilon^2}\bigg)
    \end{equation}
    iterations. Furthermore, these step sizes can be chosen adaptively w.r.t.\ $r$ and $\Llocr$, %
    with at most an additive logarithmic cost in the complexity.
\end{corollary}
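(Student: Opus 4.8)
The plan is to read off the first displayed bound directly from the generic AGD+ analysis, and then design a step-size schedule that optimizes the resulting expression. Since AGD+ is run here with exact subgradients, $\vg_k = \grad(\vx_k)$, the bias and variance error terms in \eqref{eq:error-terms-def} vanish, $E_k^b = E_k^v = 0$, so $E_k = E_k^s$ for every $k$. Under the hypothesis $\frac{a_k^2}{A_k} \leq \frac{r}{\Llocr}$, Lemma~\ref{lemma:approx-smoothness-via-BMLV} (which in turn rests on the quadratic upper bound of Lemma~\ref{lem:upper-quadratic-approx}) gives $E_k^s \leq \frac{a_k^2\Llocr^2}{2}$. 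Taking $\vw = \vx_*$ in the conclusion of Theorem~\ref{thm:generic-AGD+-bound} and substituting these bounds yields
\[
f(\vy_k) - f(\vx_*) \;\leq\; \frac{\tfrac12\|\vx_* - \vx_0\|_2^2 + \tfrac{\Llocr^2}{2}\sum_{i=0}^k a_i^2}{A_k} \;=\; \frac{\|\vx_* - \vx_0\|_2^2 + \Llocr^2\sum_{i=0}^k a_i^2}{2A_k},
\]
which is exactly the first claim of the corollary.

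For the iteration-complexity bound, write $D := \|\vx_* - \vx_0\|_2$ and specialize the step sizes to the standard accelerated recursion with a free parameter $c \in (0,\, r/\Llocr]$: set $a_0 = c$ and, for $k \geq 1$, let $a_k$ be the positive root of $a_k^2 = c(A_{k-1} + a_k)$, so that $\frac{a_k^2}{A_k} = c$ for all $k$ and the step-size restriction is met. The usual telescoping estimate $\sqrt{A_k} - \sqrt{A_{k-1}} = \frac{a_k}{\sqrt{A_k} + \sqrt{A_{k-1}}} \geq \frac{\sqrt{cA_k}}{2\sqrt{A_k}} = \frac{\sqrt c}{2}$ gives $A_k \geq \frac{ck^2}{4}$, while $a_i^2 = cA_i$ implies $\sum_{i=0}^k a_i^2 = c\sum_{i=0}^k A_i \leq c(k+1)A_k$. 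Plugging these into the bound above gives
\[
f(\vy_k) - f(\vx_*) \;\leq\; \frac{2D^2}{ck^2} + \frac{\Llocr^2 c(k+1)}{2}.
\]

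Now I would optimize over $c$. The first term is the accelerated rate for a function of effective smoothness $\Llocr/r$, and the second is an error-accumulation term that \emph{grows} with $k$; requiring both to be at most $\epsilon$ forces $k \geq \tfrac{2D}{\sqrt{c\epsilon}}$ and $k+1 \leq \tfrac{\epsilon}{\Llocr^2 c}$, which are jointly feasible precisely when $c = O\!\big(\epsilon^3/(D^2\Llocr^4)\big)$. Choosing $c = \Theta\!\big(\min\{ \tfrac{r}{\Llocr},\, \tfrac{\epsilon^3}{D^2\Llocr^4}\}\big)$, exactly one of the two regimes is active: if $c = \Theta(r/\Llocr)$ then reaching accuracy $\epsilon$ takes $k = O\!\big(D\sqrt{\Llocr/(r\epsilon)}\big)$ steps (and the accumulation term is then automatically $O(\epsilon)$), while if $c = \Theta(\epsilon^3/(D^2\Llocr^4))$ then balancing gives $k = O(D^2\Llocr^2/\epsilon^2)$. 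Since the sum of the two expressions upper-bounds the regime-dependent count, we obtain $k = O\!\big(D\sqrt{\Llocr/(r\epsilon)} + D^2\Llocr^2/\epsilon^2\big)$, i.e.\ \eqref{eq:approx-smoothing-complexity}. This step-size design is the main obstacle: one cannot simply run forever with the largest admissible step size, since then $\Llocr^2\sum_i a_i^2/A_k$ diverges; the right move is to throttle acceleration through $c$ and to recognize that the complexity splits into the two regimes above, whose sum is what the stated bound records.

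Finally, for the adaptive version, I would replace the fixed schedule by a Nesterov-style backtracking search over a trial effective smoothness $\widehat M$ used in place of $\Llocr/r$ (as in universal accelerated methods \cite{nesterov2015universal,gasnikov2018universal}): at each step one certifies a~posteriori the approximate-smoothness inequality underlying the bound on $E_k^s$ by evaluating $f$ and $\grad$, doubling $\widehat M$ when the check fails and otherwise letting it decrease slowly, thereby avoiding explicit knowledge of $r$ and $\Llocr$; the computable gap estimate $G_k$ from Proposition~\ref{prop:G_k-construction} can serve as the stopping criterion. A standard amortization bounds the total number of trial steps by the number of accepted steps plus a logarithmic term, which accounts for the additive logarithmic overhead. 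Handling the boundary regimes in $\epsilon$ (where $\epsilon$ is comparable to $\Llocr r$ or to $D\Llocr$ and the stated bound is to be read as $k = O(1)$) is routine.
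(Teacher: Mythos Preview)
Your derivation of the main inequality is identical to the paper's: both invoke Theorem~\ref{thm:generic-AGD+-bound} with $E_k^b=E_k^v=0$ and then Lemma~\ref{lemma:approx-smoothness-via-BMLV} to control $E_k^s$.

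For the iteration bound, you take a different but valid route. You fix $\frac{a_k^2}{A_k}=c$ and tune $c\in\{\Theta(r/\Llocr),\Theta(\epsilon^3/(D^2\Llocr^4))\}$ so that the growing term $\tfrac{1}{2}\Llocr^2 c(k{+}1)$ and the decaying term $2D^2/(ck^2)$ are both $O(\epsilon)$ at the target $k$; this is the Ghadimi--Lan balancing the paper itself uses later in the proof of Corollary~\ref{cor:cvx-rand-smoothing}. The paper instead \emph{caps} the step size, setting $a_k=\min\{\epsilon/\Llocr^2,\;\text{accelerated step with }a_k^2/A_k=r/\Llocr\}$. The cap makes $\frac{\Llocr^2\sum a_i^2}{2A_k}\le\epsilon/2$ uniformly in $k$ (since $a_i^2\le a_i\cdot\epsilon/\Llocr^2$), while the lower bound $A_k\ge\min\{(k{+}1)\epsilon/\Llocr^2,\;(r/\Llocr)(k{+}1)^2\}$ yields the two complexity regimes directly. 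The trade-off: your schedule requires $D$ (and a target $k$) to set $c$, whereas the paper's schedule does not depend on $D$ and gives a bound valid for every $k$, which is what makes the subsequent adaptive argument clean.

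This difference carries into the adaptive claim. The paper's line search simply tests the computable condition $E_k^s\le a_k\epsilon/2$ and halves $a_k$ on failure; since the capped choice provably satisfies it, termination is immediate. Your sketch of backtracking over an effective smoothness $\widehat M\approx\Llocr/r$ is in the right spirit (and is exactly what Nesterov's universal method does, with an $\epsilon$-slack in the acceptance test), but note that merely learning $\Llocr/r$ is not enough under your schedule, because your $c$ also encodes $D$; you would need the $\epsilon$-slack version of the test (equivalently, the paper's $E_k^s\le a_k\epsilon/2$ check) rather than a pure smoothness certificate. With that caveat, your argument is correct.
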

\begin{proof}
    The first inequality in the statement  follows directly from Lemma~\ref{lemma:approx-smoothness-via-BMLV} and Theorem~\ref{thm:generic-AGD+-bound}. Observe that the condition $\frac{{a_k}^2}{A_k} \leq \frac{r}{\Llocr}$ for $k \geq 0$ corresponds to $a_0 = A_0 \leq \frac{r}{\Llocr}$ and $a_k \leq \frac{r}{\Llocr}\frac{1 + \sqrt{1 + 4A_{k-1}\Llocr/r }}{2}$ for $k \geq 1.$ By convention, let $A_{-1} = 0$ and define $a_k$ via
    \begin{equation}\label{eq:det-choice-ak}
        a_k = \min\Big\{\frac{\epsilon}{\Llocr^2} ,\, \frac{r}{\Llocr}\frac{1 + \sqrt{1 + 4A_{k-1}\Llocr/r }}{2}\Big\}. 
    \end{equation}
    Because $a_k \leq \frac{\epsilon}{\Llocr^2}$ for all $k \geq 0,$ we have that $\frac{\sum_{i=0}^k {a_i}^2 \Llocr^2}{2A_k} \leq \frac{\epsilon \sum_{i=0}^k {a_i}}{2 A_k} = \frac{\epsilon}{2}.$ Further, by the choice of $a_k$ from \eqref{eq:det-choice-ak}, we have that $A_k \geq \min\{(k+1) \frac{\epsilon}{\Llocr^2}, \frac{r}{\Llocr} (k+1)^2\}.$ As a consequence,
    \begin{align*}
        \frac{\|\vx_* - \vx_0\|_2^2}{2 A_k} &\leq \max\Big\{\frac{\|\vx_* - \vx_0\|_2^2}{2 (k+1) {\epsilon}/{\Llocr^2}},\, \frac{\|\vx_* - \vx_0\|_2^2}{2 ({r}/{\Llocr}) (k+1)^2}\Big\}.
    \end{align*}
    Thus, $\frac{\|\vx_* - \vx_0\|_2^2}{2 A_k} \leq \frac{\epsilon}{2}$ (which immediately leads to $f(\vy_k) - f(\vx_*) \leq \epsilon$) for $k \geq \frac{\Llocr^2 \|\vx_* - \vx_0\|_2^2}{\epsilon^2} + \sqrt{\frac{\Llocr}{r\epsilon}}\|\vx_* - \vx_0\|_2 - 1,$ completing the proof for the claimed number of iterations in \eqref{eq:approx-smoothing-complexity}.  

    Finally, to obtain the claimed bounds all that was needed was that $E_k^s \leq a_k \epsilon/2.$ Since $E_k^s$ is computable based on the iterates of the algorithm and step sizes set by the algorithm, this is a computable condition that can be checked. If the condition does not hold for the current choice of the step size $a_k$ in iteration $k \geq 0,$ the step size can be halved. Since we have already argued in \eqref{eq:det-choice-ak} a lower bound on the step size that suffices for the claimed iteration complexity, we get that the step size can be determined with at most a logarithmic cost using standard arguments based on the backtracking line search as in e.g., \cite{nesterov2015universal}. 
\end{proof}

A consequence of the above result is that we do not need to know ``the best'' radius $r$ a priori. The algorithm can automatically adapt to the ``best value'' of $r$ just assuming that the maximum local variation of the subgradient is bounded at any radius $r_0.$ Additionally, it is not hard to argue that the result stated in Corollary \ref{cor:approx-smoothing-deterministic} captures prior results on universal gradient methods under (global) weak smoothness or H\"{o}lder continuous gradient (see, e.g., \cite{nesterov2015universal,gasnikov2018universal}), where one assumes that there exist constants $\kappa \in [0, 1]$ and $M \in (0, \infty)$ so that \eqref{eq:weakly-smooth-def} holds. 
%
%
In particular, under \eqref{eq:weakly-smooth-def}, we have that the \gradBMV\ property applies for any $r > 0$ with $\Llocr = M r^\kappa$ and thus we can choose $r$ to minimize the oracle complexity from \eqref{eq:approx-smoothing-complexity}. Setting $r = \big(\frac{\epsilon^3}{M^3 D^2}\big)^{\frac{1}{1 + 3\kappa}}$ leads to the oracle complexity
\begin{equation}\label{eq:weakly-smooth-oracle-complexity}
    k = O\Big(\frac{M D^{1+\kappa}}{\epsilon}\Big)^{\frac{2}{1+3\kappa}},
\end{equation}
which is known to be optimal for this problem class \cite{Nemirovski:1985,Guzman:2015}. \markupadd{Further, if $f(\vx)$ can be decomposed into a sum of an $L$-smooth (gradient $L$-Lipschitz) and $M$-Lipschitz function, then $\Llocr \leq Lr + M$ for any $r > 0.$ In particular, taking $r = M/L$ and plugging into \eqref{eq:approx-smoothing-complexity}, we recover the optimal oracle complexity $O\big(\sqrt{\frac{L}{\epsilon}}\|\vx_* - \vx_0\|_2 + \frac{M^2\|\vx_* - \vx_0\|_2^2}{\epsilon^2}\big)$ obtained in \cite{grimmer2024optimal}.\footnote{\markupadd{We thank Ben Grimmer for this insight.}}}%

We see from this discussion that the provided result strictly generalizes known results for classical problem classes defined via weak smoothness. However, as noted before, \gradBMV\ class provides a more fine-grained characterization of complexity as it is possible for $\Llocr$ to be much smaller than the worst case $M r^\kappa$ for some $r$ and lead to a lower oracle complexity upper bound than stated in \eqref{eq:weakly-smooth-oracle-complexity}. Additionally, as argued earlier, $\Llocr$ can be finite (and small) even for functions that are neither globally Lipschitz nor (weakly) smooth.

\subsubsection{Randomized Smoothing}\label{sec:randomized-smoothing}

We now discuss how to obtain complexity bounds \markupadd{that potentially depend on the weaker \gradBMO\ property, using a randomized smoothing approach. We further show how the mean width of the subdifferential set around optima (see Lemma \ref{lemma:sparse-smooth-approx-err}) affects the oracle complexity, leading to the first positive result on parallelizing convex optimization for a nontrivial class of problems (e.g., piecewise-linear functions with polynomially many pieces).} \markupdelete{for \gradBMV\ and \gradBMO\ functions}. The idea is to apply AGD+ to $f_r$ defined by \eqref{eq:avg-fun-def}, using gradient estimates $\vg_k = \grad(\vx_k + r\vu_k).$ To do so, we need to show that $E_k^b$ and $E_k^v$ can be bounded in expectation, while $E_k^s$ will be at most zero under the appropriate step size choice, as $f_r$ is smooth (recall the results from Lemma~\ref{lem:smoothness_BMO}). 

\begin{proposition}\label{prop:bnded-err}
    Let $r > 0.$ Let $f:\rr^d \to \rr$ be a \gradBMV\ and \gradBMO\ function with constants $\widehat{L}_{2r}$ and $L_r$, respectively. Let $\vw \in \cx$ be arbitrary but fixed. Let $f_r$ be defined by \eqref{eq:avg-fun-def}. Let $\vx_k, \vz_k$ be the iterates of \eqref{eq:AGD+}. Then:
    \begin{equation}\notag
        \ee[E_k^b + E_k^v(\vw)] \leq {a_k}^2 L_r \widehat{L}_{2r}.%
    \end{equation}
\end{proposition}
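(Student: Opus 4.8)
The plan is to exploit the martingale-difference structure induced by the fresh sample $\vu_k$ in the estimate $\vg_k = \grad(\vx_k + r\vu_k)$. Let $\mathcal{F}_{k-1}$ denote the $\sigma$-algebra generated by $\vu_0,\dots,\vu_{k-1}$. From the recursion \eqref{eq:AGD+}, the iterates $\vx_k$, $\vz_{k-1}$ and $\vv_{k-1} = \proj_\cx(\vz_{k-1})$ are all $\mathcal{F}_{k-1}$-measurable, whereas $\vu_k$ is independent of $\mathcal{F}_{k-1}$; consequently $\ee[\vg_k \mid \mathcal{F}_{k-1}] = \nabla f_r(\vx_k)$ by the (conditional) unbiasedness of randomized smoothing noted before Lemma~\ref{lemma:stokes}. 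Since \eqref{eq:AGD+} is run on $f_r$, the symbol $\grad(\vx_k)$ appearing in the error terms \eqref{eq:error-terms-def} stands here for $\nabla f_r(\vx_k)$.

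First I would dispatch the bias term: because $\vw$ is fixed and $\vx_k,\nabla f_r(\vx_k)$ are $\mathcal{F}_{k-1}$-measurable, $\ee[E_k^b \mid \mathcal{F}_{k-1}] = a_k\innp{\ee[\vg_k\mid\mathcal{F}_{k-1}] - \nabla f_r(\vx_k), \vw - \vx_k} = 0$, hence $\ee[E_k^b] = 0$.

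The crux is the variance term $E_k^v = a_k\innp{\nabla f_r(\vx_k) - \vg_k, \vv_k - \vx_k}$, and the obstacle is that $\vv_k = \proj_\cx(\vz_{k-1} - a_k\vg_k)$ depends on $\vg_k$, so one cannot take its conditional expectation directly. I would circumvent this by introducing the auxiliary point $\vvh_k := \proj_\cx(\vz_{k-1} - a_k\nabla f_r(\vx_k))$, which is $\mathcal{F}_{k-1}$-measurable, and splitting
\[
\innp{\nabla f_r(\vx_k) - \vg_k, \vv_k - \vx_k} = \innp{\nabla f_r(\vx_k) - \vg_k, \vvh_k - \vx_k} + \innp{\nabla f_r(\vx_k) - \vg_k, \vv_k - \vvh_k}.
\]
The first summand has zero conditional expectation exactly as the bias term did. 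For the second, nonexpansiveness of the Euclidean projection gives $\|\vv_k - \vvh_k\|_2 \le a_k\|\vg_k - \nabla f_r(\vx_k)\|_2$, so Cauchy--Schwarz yields $\innp{\nabla f_r(\vx_k) - \vg_k, \vv_k - \vvh_k} \le a_k\|\vg_k - \nabla f_r(\vx_k)\|_2^2$. Hence $\ee[E_k^v] \le a_k^2\,\ee[\|\vg_k - \nabla f_r(\vx_k)\|_2^2]$.

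It remains to control the second moment of the smoothing noise, which is precisely Lemma~\ref{lem:variance-bnd}: applying it with $\vx = \vx_k$ conditionally on $\mathcal{F}_{k-1}$ gives $\ee[\|\vg_k - \nabla f_r(\vx_k)\|_2^2 \mid \mathcal{F}_{k-1}] \le L_r\widehat{L}_{2r}$, and taking total expectation, $\ee[\|\vg_k - \nabla f_r(\vx_k)\|_2^2] \le L_r\widehat{L}_{2r}$. Combining the three estimates, $\ee[E_k^b + E_k^v(\vw)] = \ee[E_k^v(\vw)] \le a_k^2 L_r\widehat{L}_{2r}$, which is the claim. The only nonroutine step is the introduction of $\vvh_k$: without it the inner product $\innp{\nabla f_r(\vx_k) - \vg_k, \vv_k - \vx_k}$ cannot be handled, since $\|\vv_k - \vx_k\|_2$ is not controlled by the noise magnitude, and it is the projection's nonexpansiveness that lets us trade that inner product for a term proportional to the (bounded) variance.
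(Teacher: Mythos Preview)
Your proof is correct and follows essentially the same route as the paper: the paper also dispatches $E_k^b$ by unbiasedness, introduces the same auxiliary point $\vv=\proj_{\cx}(\vz_{k-1}-a_k\nabla f_r(\vx_k))$, and then combines Cauchy--Schwarz with nonexpansiveness of the projection and Lemma~\ref{lem:variance-bnd} to bound the remaining term by $a_k^2 L_r\widehat{L}_{2r}$. The only cosmetic difference is that the paper organizes the computation around the sum $E_k^b+E_k^v$ (which collapses to $a_k\innp{\vg_k-\nabla f_r(\vx_k),\vw-\vv_k}$) rather than handling $E_k^v$ by splitting $\vv_k-\vx_k$ as you do, but the two decompositions are equivalent once you note that $\ee_{\vu_k}[\innp{\vg_k-\nabla f_r(\vx_k),\cdot}]$ vanishes on any $\mathcal{F}_{k-1}$-measurable vector.
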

\begin{proof}
    By the definition of $f_r,$ $\ee_{\vu_k \sim \unif(\cb)}[\grad(\vx_k + r \vu_k) ] = \nabla f_r(\vx_k);$ hence, we have $\ee_{\vu_k \sim \unif(\cb)}[\innp{\grad(\vx_k + r \vu_k) - \nabla f_r(\vx_k), \vv}]=0$ for any fixed  $\vv$, and, thus, $\ee_{\vu_k}[E_k^b] = 0$. 
    
    Let $\vv = \proj_{\cx}(\vz_{k-1} - a_k \markupadd{\nabla f_r(\vx_k)}\markupdelete{\grad(\vx_k)}).$ Recalling that $\vz_k = \vz_{k-1} - a_k \grad(\vx_k + r \vu_k),$ we have %
    \begin{align*}
       &\; \ee_{\vu_k \sim \unif(\cb)}[\innp{\grad(\vx_k + r \vu_k) - \nabla f_r(\vx_k), \vw - \proj_{\cx}(\vz_k)}] \\
        =&\; \ee_{\vu_k \sim \unif(\cb)}[\innp{\grad(\vx_k + r \vu_k) - \nabla f_r(\vx_k), \proj_{\cx}(\vz_{k-1} - a_k \markupadd{\nabla f_r(\vx_k)}\markupdelete{\grad(\vx_k)}) - \proj_{\cx}(\vz_{k})}]\\
        \stackrel{(i)}{\leq} &\; \ee_{\vu_k \sim \unif(\cb)}[\|\grad(\vx_k + r \vu_k) - \nabla f_r(\vx_k)\|_2 \|\proj_{\cx}(\vz_{k-1} - a_k \markupadd{\nabla f_r(\vx_k)}\markupdelete{\grad(\vx_k)}) - \proj_{\cx}(\vz_{k})\|_2]\\
        \stackrel{(ii)}{\leq} &\; a_k \ee_{\vu_k \sim \unif(\cb)}[\|\grad(\vx_k + r \vu_k) - \nabla f_r(\vx_k)\|_2\markupadd{^2} \markupdelete{\|\grad(\vx_k) - \grad(\vx_k + r \vu_k)\|_2}]\\
        \markupadd{\stackrel{(iii)}{\leq}} &\; \markupadd{a_k \widehat{L}_{2r} L_r,}
    \end{align*}
    where ($i$) is by Cauchy-Schwarz inequality, ($ii$) is by nonexpansiveness of the projection operator and $\vz_k = \vz_{k-1} - a_k \grad(\vx_k + r \vu_k)$\markupadd{, and ($iii$) is by Lemma \ref{lem:variance-bnd}}.\markupdelete{, ($iii$) is by the definition of $\Llocr,$ and ($iv$) is by the definition of $L_r.$} As a consequence, $\ee_{\vu_k}[E_k^v(\vw)] \leq {a_k}^2 \widehat{L}_{2r} L_r.$ 
    To complete the proof, it remains to take the expectation w.r.t.\ $\vu_0, \dots, \vu_{k-1}$ on both sides of the last inequality. 
\end{proof}

\begin{remark}\label{rem:rand-smoothing-variance}
\markupdelete{    A slight modification to the proof of Proposition \ref{prop:bnded-err}, where we instead take $\vv = \proj_{\cx}(\vz_{k-1} - a_k \nabla f_r(\vx_k))$ would, by the same argument, lead to the bound} \markupadd{Observe that in proving Proposition \ref{prop:bnded-err}, we showed that }
    \begin{equation}\label{eq:vr-like-bnd}
    \begin{aligned}
        &\; \ee_{\vu_k \sim \unif(\cb)}[\innp{\grad(\vx_k + r \vu_k) - \nabla f_r(\vx_k), \vw - \proj_{\cx}(\vz_k)}]\\ \leq \; & a_k \ee_{\vu_k \sim \unif(\cb)}[\|\grad(\vx_k + r \vu_k) - \nabla f_r(\vx_k)\|_2^2].
    \end{aligned}
    \end{equation}
    This quantity is clearly bounded by $a_k \widehat{L}_{2r}^2$ (which is generally a looser bound than what is provided in Proposition \ref{prop:bnded-err}; recall Example \ref{ex:Lr-vs-Llocr}),  but can also be bounded by defining a slightly stronger Lipschitz condition than \gradBMO, which is more ``variance-like:'' 
    \[\sup_{0 < \rho \leq r}\sup_{\vx \in \rr^d} \sqrt{\frac{1}{\vol(\cs_{\rho})}\int_{\cs_{\rho}}\|\grad(\vx + \vu) - \nabla f_r(\vx)\|_2^2\dd\vu} \leq \Tilde{L}_r < \infty.\] 
    The main usefulness of  \eqref{eq:vr-like-bnd} is that in parallel optimization settings this quantity can be reduced by taking more samples: by standard properties of the variance, the empirical average of $m$ samples $\grad(\vx_k + r\vu)$ for $\vu$'s drawn i.i.d.\ from $\unif(\cb)$ would reduce the right-hand side of \eqref{eq:vr-like-bnd} by a factor $\frac{1}{m}.$ Thus, for any $\epsilon > 0$, $m = \lceil\frac{\tilde{L}^2}{\epsilon}\rceil $ samples suffice to makes this quantity at most $\epsilon.$ \markupadd{We further note that without considerations related to reducing the variance via minibatching, it is possible to obtain a slightly tighter bound scaling with $ {a_k}^2 \Llocr \Lr$ instead of ${a_k}^2\widehat{L}_{2r}\Lr$ by choosing $\vv = \proj_{\cx}(\vz_{k-1} - a_k \grad(\vx_k))$ and following the same line of argument in the proof of Proposition \ref{prop:bnded-err}.}
\end{remark}

\paragraph{Choosing the smoothing radius $r$} Our standing assumption is that there exists a radius $r_0$ for which $\widehat{L}_{r_0}, L_{r_0}$ are bounded on the feasible set $\cx$. This is clearly true for Lipschitz continuous functions, but, as we have discussed before, can hold more generally. Observe that if this assumption holds for some $r_0,$ 
%
then for any $\epsilon_0 > 0,$ we can choose $r > 0$ such that $r \Llocr \leq \epsilon_0$ and $r L_r \leq \epsilon_0$, as for $r \in (0, r_0]$, we have $\Llocr \leq \widehat{L}_{r_0}$ and $L_r \leq L_{r_0}$.

Our ``ideal'' choice of a smoothing radius is the largest radius $r > 0$ such that $f_r(\vx_*) - f(\vx_*) \leq \epsilon/2,$ where $\epsilon > 0$ is the target error and $\vx_*$ a minimizer of $f$. Using Lemma~\ref{lemma:generic-smooth-approx-err}, to have $f_r(\vx_*) - f(\vx_*) \leq \epsilon/2,$ it suffices that $r L_r \leq \epsilon/2.$ Alternatively, based on Lemma~\ref{lemma:sparse-smooth-approx-err} an Remark~\ref{rem:bounded-Gaussian-width}, if the Goldstein $r$-subdifferential at $\vx_*$, $\partial_r f(\vx_*)$, is contained in a polytope $K_{\vx_*}$ of Euclidean diameter $D_{\vx_*}$, 
    then %
    $
        f_r(\vx_*) - f(\vx_*) = O\big(r D_{\vx_*}  \sqrt{\frac{\ln|{ \ext}(K_{\vx_*})|}{d}}\big).
    $ 
Observe that it suffices that such a condition holds only for the Goldstein $r$-subdifferential at $\vx_*$. %
The reason for considering this condition is that it allows choosing a potentially much larger smoothing radius $r.$ In particular, due to the \gradBMV\ property, it is possible to choose an enclosing polytope $K_{\vx_*}$ to be of diameter $C \Llocr,$ for any $C > 1.$ Here, the tradeoff in choosing $C$ is that we want the polytope to have as few vertices as possible while keeping $C$ as an absolute constant. In particular, if $K_{\vx_*}$ has $\mathrm{poly}(d)$ vertices, then we can ensure $f_r(\vx_*) - f(\vx_*) \leq \epsilon/2$ with $r \Llocr = O\big(\epsilon \sqrt{\frac{d}{\ln(d)}}\big)$. We summarize the resulting complexity bounds as follows. 

\begin{corollary}\label{cor:cvx-rand-smoothing}
    Let $f: \rr^d \to \rr$ be a convex function minimized by some $\vx_* \in \cx$ on a closed convex set $\cx.$ Suppose that there exists a radius $r_0 > 0$ such that $f$ is a \gradBMV\ function for $r_0$. Given a target error $\epsilon > 0$, suppose the radius $r > 0$ is chosen so that $f_r(\vx_*) - f(\vx_*) \leq \epsilon/2$, where $f_r$ is defined by \eqref{eq:avg-fun-def}. Let $\lambda_r$ denote the \markupdelete{smoothness parameter of $f_r.$}\markupadd{Lipschitz constant of $\nabla f_r.$}  Consider applying \eqref{eq:AGD+} to $f_r(\vx)$, using stochastic gradient oracle $\grad(\vx + r\vu),$ $\vu \sim \unif(\cb),$ for an arbitrary initial point $\vx_0 \in \cx$ and $\frac{{a_k}^2}{A_k} \leq \frac{1}{\lambda_r},$ $\forall k \geq 0.$ Then, $\forall k \geq 0,$
    \begin{equation}\notag
        \ee[f(\vy_k) - f(\vx_*)] \leq \frac{\epsilon}{2} + \frac{\frac{1}{2}\|\vx_* - \vx_0\|_2^2 + \sum_{i=0}^k {a_i}^2 \Llocr L_r}{A_k}.
    \end{equation}
    In particular, there exist step sizes $\{a_i\}_{i=0}^k$ such that $\ee[f(\vy_k) - f(\vx_*)] \leq \epsilon$ after at most 
    \begin{equation}\label{eq:rand-smoothing-complexity}
        k = O\bigg(\sqrt{\frac{\lambda_r}{\epsilon}}\|\vx_* - \vx_0\|_2 + \frac{\Llocr L_r \|\vx_* - \vx_0\|_2^2}{\epsilon^2}\bigg)
    \end{equation}
    iterations. In the above bound, $\lambda_r$ satisfies the following
    \begin{equation}\notag
        \begin{aligned}
            \lambda_r = O\bigg(\min\bigg\{ \frac{{L_r}^2 d}{\epsilon},\, \frac{L_r \Llocr \sqrt{d}}{\epsilon},\, 
            \frac{{\Llocr}^2 \sqrt{\ln(|\ext(K_{\vx_*})|)}}{\epsilon}\bigg\}\bigg).
        \end{aligned}
    \end{equation}
\end{corollary}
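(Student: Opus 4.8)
The plan is to run \eqref{eq:AGD+} on the smoothed surrogate $f_r$ instead of on $f$ itself, feeding it the stochastic gradient estimates $\vg_k = \grad(\vx_k + r\vu_k)$ with $\vu_k \sim \unif(\cb)$, and then transfer the resulting guarantee back to $f$. Observe that $f_r$ is convex (an average of the shifts $\vx \mapsto f(\vx + \vu)$ of the convex $f$), has $\lambda_r$-Lipschitz gradient by Lemma~\ref{lem:smoothness_BMO}, and that each $\vg_k$ is a conditionally unbiased estimate of $\nabla f_r(\vx_k)$ (by dominated convergence applied to the representation \eqref{eq:f_r-via-expectation}). Hence Theorem~\ref{thm:generic-AGD+-bound} applies with $f$ replaced by $f_r$, $\grad(\vx_k)$ interpreted as $\nabla f_r(\vx_k)$, and $\vw = \vx_*$, giving $f_r(\vy_k) - f_r(\vx_*) \le \frac{(1/2)\|\vx_* - \vx_0\|_2^2 + \sum_{i=0}^k E_i}{A_k}$, so the task reduces to controlling the error terms $E_i = E_i^s + E_i^b + E_i^v$ of \eqref{eq:error-terms-def}.

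First I would dispatch $E_k^s$: since $f_r$ is $\lambda_r$-smooth, $f_r(\vy_k) - f_r(\vx_k) - \innp{\nabla f_r(\vx_k), \vy_k - \vx_k} \le \frac{\lambda_r}{2}\|\vy_k - \vx_k\|_2^2$, and the hypothesis $\frac{{a_k}^2}{A_k} \le \frac{1}{\lambda_r}$ forces $E_k^s \le 0$ for every realization. For the bias and variance terms I would invoke Proposition~\ref{prop:bnded-err}, or rather its sharper variant flagged in Remark~\ref{rem:rand-smoothing-variance}: taking $\vv = \proj_{\cx}(\vz_{k-1} - a_k\grad(\vx_k))$, using $\|\grad(\vx_k + r\vu_k) - \grad(\vx_k)\|_2 \le \Llocr$ from \gradBMV, and $\ee_{\vu_k \sim \unif(\cb)}[\|\grad(\vx_k + r\vu_k) - \nabla f_r(\vx_k)\|_2] \le L_r$ (pass to polar coordinates and use Definition~\ref{def:BMO-grad-sphere}), one gets $\ee[E_k^b + E_k^v] \le {a_k}^2 \Llocr L_r$ after the tower expectation over $\vu_0,\dots,\vu_{k-1}$, with conditional unbiasedness killing $\ee[E_k^b]$. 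Taking expectations in the AGD+ bound and then using $f(\vy_k) \le f_r(\vy_k)$ together with $f_r(\vx_*) - f(\vx_*) \le \epsilon/2$ — both from Lemma~\ref{lemma:generic-smooth-approx-err} and the hypothesis on $r$ — yields the first displayed inequality of the corollary.

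The iteration-complexity claim then follows essentially as in Corollary~\ref{cor:approx-smoothing-deterministic}: choose $a_k = \min\{\frac{\epsilon}{4\Llocr L_r},\, \frac{1}{\lambda_r}\frac{1 + \sqrt{1 + 4A_{k-1}\lambda_r}}{2}\}$ with $A_{-1} = 0$, so that $\frac{{a_k}^2}{A_k} \le \frac{1}{\lambda_r}$, the variance contribution satisfies $\frac{\sum_{i=0}^k {a_i}^2 \Llocr L_r}{A_k} \le \frac{\epsilon}{4}$ automatically, and $A_k \ge \min\{(k+1)\frac{\epsilon}{4\Llocr L_r},\, \frac{(k+1)^2}{4\lambda_r}\}$; imposing $\frac{\|\vx_* - \vx_0\|_2^2}{2A_k} \le \frac{\epsilon}{4}$ gives $k = O(\sqrt{\lambda_r/\epsilon}\,\|\vx_* - \vx_0\|_2 + \Llocr L_r\|\vx_* - \vx_0\|_2^2/\epsilon^2)$, and, as in the deterministic case, the computable condition $E_k^s \le 0$ lets these step sizes be set adaptively by backtracking. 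To bound $\lambda_r$, Lemma~\ref{lem:smoothness_BMO} gives $\lambda_r \le \min\{L_r d/r,\, \sqrt{\pi/2}\,\Llocr \sqrt{d}/r\}$; if $r$ is taken with $r L_r \le \epsilon/2$ (sufficient for $f_r(\vx_*) - f(\vx_*) \le \epsilon/2$ by Lemma~\ref{lemma:generic-smooth-approx-err}) then $1/r \le 2L_r/\epsilon$, yielding the branches $O(L_r^2 d/\epsilon)$ and $O(\Llocr L_r\sqrt{d}/\epsilon)$; if instead $\partial_r f(\vx_*)$ sits inside a polytope $K_{\vx_*}$ of diameter $O(\Llocr)$ (possible by \gradBMV) with $|\ext(K_{\vx_*})|$ vertices, then Lemma~\ref{lemma:sparse-smooth-approx-err} together with item (iii) of Remark~\ref{rem:bounded-Gaussian-width} permits the larger choice $r\Llocr = O(\epsilon\sqrt{d/\ln|\ext(K_{\vx_*})|})$, which turns $\sqrt{\pi/2}\,\Llocr\sqrt{d}/r$ into $O(\Llocr^2\sqrt{\ln|\ext(K_{\vx_*})|}/\epsilon)$ — the third branch.

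I expect the main obstacle to be this last step: disentangling the self-referential dependence of $L_r$ and $\Llocr$ on $r$ while coordinating it with the two distinct sufficient conditions for $f_r(\vx_*) - f(\vx_*) \le \epsilon/2$ (the value-based bound of Lemma~\ref{lemma:generic-smooth-approx-err} versus the mean-width bound of Lemma~\ref{lemma:sparse-smooth-approx-err}), and, throughout, keeping the slightly sharper $\Llocr L_r$ variance bound of Remark~\ref{rem:rand-smoothing-variance} in force rather than the $\widehat{L}_{2r} L_r$ that Proposition~\ref{prop:bnded-err} gives directly. Everything else is bookkeeping around the AGD+ machinery already established in Theorem~\ref{thm:generic-AGD+-bound}.
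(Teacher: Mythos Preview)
Your proposal is correct and matches the paper's argument almost step for step: reduce to $f_r$ via $f(\vy_k)\le f_r(\vy_k)$ and $f_r(\vx_*)-f(\vx_*)\le\epsilon/2$, kill $E_k^s$ by $\lambda_r$-smoothness and the step-size constraint, bound $\ee[E_k^b+E_k^v]\le a_k^2\Llocr L_r$ via the $\vv=\proj_{\cx}(\vz_{k-1}-a_k\grad(\vx_k))$ variant of Proposition~\ref{prop:bnded-err} (exactly the refinement in Remark~\ref{rem:rand-smoothing-variance}), and extract the three $\lambda_r$ branches by combining Lemma~\ref{lem:smoothness_BMO} with the two admissible choices of $r$ from Lemmas~\ref{lemma:generic-smooth-approx-err} and~\ref{lemma:sparse-smooth-approx-err}. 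The only real difference is the step-size recipe: you transplant the $a_k=\min\{\cdot,\cdot\}$ construction from Corollary~\ref{cor:approx-smoothing-deterministic}, whereas the paper uses the Ghadimi--Lan device of fixing $a_i^2/A_i=\beta$ with $\beta=\min\{1/\lambda_r,\ \|\vx_*-\vx_0\|_2/(\sqrt{L_r\Llocr}\,k^{3/2})\}$. Both routes give the same complexity; yours has the mild advantage of not depending on $k$.

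One point you should drop: the remark that ``the computable condition $E_k^s\le 0$ lets these step sizes be set adaptively by backtracking.'' In this stochastic setting $E_k^s$ is defined with respect to $f_r$, so it involves $f_r(\vy_k)$, $f_r(\vx_k)$, and $\nabla f_r(\vx_k)$, none of which are available to the algorithm (they are expectations over $\vu\sim\unif(\cb)$). The backtracking argument from Corollary~\ref{cor:approx-smoothing-deterministic} does not carry over here, and indeed the paper does not claim adaptivity for this result.
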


Before proving the corollary, a few remarks are in order. Observe first that the second term in \eqref{eq:rand-smoothing-complexity} can be replaced by a term absorbed by the first one by taking multiple samples $\grad(\vx_k + r\vu_k)$ in parallel and choosing $\vg_k$ as their average (see Remark~\ref{rem:rand-smoothing-variance}). This means that it is possible to parallelize the method using $\mathrm{poly}(1/\epsilon, \Llocr, \|\vx_* - \vx_0\|_2)$ oracle queries per round and have the first term in \eqref{eq:rand-smoothing-complexity} determine the number of parallel rounds. See, e.g., \cite{bubeck2019complexity,duchi2012randomized} for similar ideas used in nonsmooth Lipschitz continuous optimization. 

Second, similar to the result from Corollary~\ref{cor:approx-smoothing-deterministic}, the value of the Lipschitz constant of $f$ plays no role in the oracle complexity bound in Corollary \ref{cor:cvx-rand-smoothing}. It is possible that a function is not Lipschitz continuous at all (recall the examples from Section~\ref{sec:bounded-local-var}), yet we get complexity bounds that are similar to the complexity of nonsmooth Lipschitz convex optimization, at least in some regimes of the problem parameters.

Because $L_r$ can generally be much smaller than $\Llocr$, it is not clear a priori which term in the minimum determines the value of $\lambda_r$ (and thus the oracle complexity in \eqref{eq:rand-smoothing-complexity}). When $L_r = \Omega(\Llocr/\sqrt{d}),$ then we have $\lambda_r = O(\min\{\frac{L_r \Llocr \sqrt{d}}{\epsilon},\, \frac{{\Llocr}^2 \sqrt{\ln(|\ext(K_{\vx_*})|)}}{\epsilon}\})$. In particular, when the Goldstein subdifferential $\partial_r f(\vx_*)$ is contained in a polytope of diameter $O(\Llocr)$ with $\mathrm{poly}(d)$ vertices, $\lambda_r$ is nearly independent of the dimension (the dependence on the dimension becomes $\sqrt{\ln(d)}$). 

A surprising aspect of this result is that not only do we get complexity that depends on $\Llocr$, which can be much smaller than the objective's Lipschitz constant, but in this case it is also possible to obtain a parallel algorithm that makes $\mathrm{poly}(\Llocr \|\vx_* - \vx_0\|_2/\epsilon)$ queries per round and has depth (number of parallel rounds) that scales with $O(\frac{\Llocr (\ln(d))^{1/4}\|\vx_* - \vx_0\|_2}{\epsilon}).$ As a consequence, we get  \emph{the first example of a class of nonsmooth optimization problems for which parallelization leads to improved depth of the algorithm that is essentially dimension-independent}. As a specific example, nonsmooth $M$-Lipschitz-continuous functions that can be expressed as or closely approximated by a maximum of polynomially many in $d$ and $1/\epsilon$ linear functions have parallel complexity at most $O(\frac{M (\ln(d/\epsilon))^{1/4}\|\vx_* - \vx_0\|_2}{\epsilon})$ -- significantly lower than the sequential complexity $O(\frac{M^2 \|\vx_* - \vx_0\|_2^2}{\epsilon^2})$ for any $d$ that is polynomial in $1/\epsilon.$ 

This last statement seems at odds with parallel oracle complexity lower bounds for standard Euclidean settings \cite{diakonikolas2020lower, bubeck2019complexity,woodworth2018graph,balkanski2019exponential}, which are all based on a max-of-linear hard probabilistic instance with $\mathrm{poly}(d, 1/\epsilon)$ components originally introduced by Nemirovski \cite{nemirovski1994parallel}. The apparent contradiction is resolved by observing that all these existing lower bounds become informative for $d \gg 1/\epsilon^2$ and crucially rely on the \emph{informative} queries being confined to the unit Euclidean ball. By contrast, our randomized smoothing approach in this case relies on queries to $\nabla f(\vx + r\vu),$ $\vu \sim \unif(\cb)$, with $r \approx \sqrt{d}\epsilon$, meaning that all queries fall well \emph{outside} the unit ball with high probability and thus the existing lower bounds do not apply. %

\begin{proof}[Proof of Corollary~\ref{cor:cvx-rand-smoothing}]
    First, because $f_r(\vx_*) - f(\vx_*) \leq \epsilon/2$ and $f_r(\vy_k) \geq f(\vy_k)$ (due to convexity of $f,$ by Lemma~\ref{lemma:generic-smooth-approx-err}), we have 
    \begin{equation}\label{eq:f-to-fr-rand-smoothing}
        \ee[f(\vy_k) - f(\vx_*)] \leq \epsilon/2 + \ee[f_r(\vy_k) - f_r(\vx_*)],
    \end{equation}
    so we only need to focus on bounding $\ee[f_r(\vy_k) - f_r(\vx_*)]$, which we do using Theorem~\ref{thm:generic-AGD+-bound} and Proposition~\ref{prop:bnded-err}. In particular, because $f_r$ is $\lambda_r$-smooth, we have that
    \begin{equation}\notag
        E_k^s \leq A_k\Big(\frac{\lambda_r}{2} - \frac{A_k}{2 {a_{k}}^2}\Big)\|\vy_k - \vx_k\|_2^2 \leq 0,
    \end{equation}
    as $\frac{{a_k}^2}{A_k} \leq \frac{1}{\lambda_r},$ by assumption. Thus, applying Theorem~\ref{thm:generic-AGD+-bound} and Proposition~\ref{prop:bnded-err}, we have
    \begin{equation}\label{eq:rand-smoothing-bnd}
        \ee[f_r(\vy_k) - f_r(\vx_*)] \leq \frac{\frac{1}{2}\|\vx_* - \vx_0\|_2^2 + \sum_{i=0}^k {a_i}^2 L_r \Llocr}{A_k},
    \end{equation}
    which, combined with \eqref{eq:f-to-fr-rand-smoothing} leads to the first inequality in Corollary \ref{cor:cvx-rand-smoothing}. 

    The bound on $\lambda_r$ follows from Lemma~\ref{lem:smoothness_BMO} and upper bounds on $f_r(\vx_*) - f(\vx_*)$ in Lemmas \ref{lemma:generic-smooth-approx-err} and Lemma~\ref{lemma:sparse-smooth-approx-err}, by setting those upper bounds to $\epsilon/2$ and solving for $r$. 

    Finally, it remains to argue that there is a choice of step sizes $a_i$ such that $f_r(\vy_k) - f_r(\vx_*) \leq \epsilon/2$ in the number of iterations stated in \eqref{eq:rand-smoothing-complexity}. This is done using similar ideas as in \cite{ghadimi2012optimal}. In particular, define $a_i$'s via $\frac{{a_i}^2}{A_i} = \beta$ (this enforces $a_0 = \beta$ and for $i \geq 1$ is a quadratic equality with a unique solution, using $A_i = A_{i-1} + a_i$) for $\beta > 0$ to be specified shortly. It is well-known that in this case for $i \geq 1,$ $a_i = \Theta(\beta i)$ and $A_i = \Theta(\beta i^2).$ Thus the bound on $f_r(\vy_k) - f_r(\vx_*)$ becomes
    \begin{equation}\label{eq:choosing-gamma}
        \ee[f_r(\vy_k) - f_r(\vx_*)] = O\Big(\frac{\|\vx_* - \vx_0\|_2^2}{\beta k^2} + k \beta L_r \Llocr\Big).
    \end{equation}
    In particular, $\beta = \frac{\|\vx_* - \vx_0\|_2}{\sqrt{L_r\Llocr}k^{3/2}}$ balances the terms on the right-hand side of \eqref{eq:choosing-gamma}, but we also need $\beta \leq \frac{1}{\lambda_r}$ to satisfy the assumption that $\frac{{a_i}^2}{A_i} \leq \frac{1}{\lambda_r}.$ Hence, we choose $\beta = \min\{\frac{1}{\lambda_r}, \frac{\|\vx_* - \vx_0\|_2}{\sqrt{L_r\Llocr}k^{3/2}}\}$. Since $\beta \leq \frac{\|\vx_* - \vx_0\|_2}{\sqrt{L_r\Llocr}k^{3/2}},$ it follows that $k \beta L_r \Llocr \leq \epsilon/4$ for $k = O(\frac{L_r \Llocr \|\vx_* - \vx_0\|_2^2}{\epsilon^2}).$ On the other hand, by the choice of $\beta,$ we have $\frac{\|\vx_* - \vx_0\|_2^2}{\beta k^2} \leq \epsilon/4$ for $k = O(\max\{\frac{L_r \Llocr \|\vx_* - \vx_0\|_2^2}{\epsilon^2},\, \sqrt{\frac{\lambda_r}{\epsilon}}\|\vx_* - \vx_0\|_2\}),$ hence the claimed bound \eqref{eq:rand-smoothing-complexity} follows. 
\end{proof}

\subsection{Goldstein's Method and Nonconvex Optimization}\label{sec:nonconvex-Goldstein}

Interestingly, our framework also proves to be useful in the nonconvex setting. In particular, in this section we provide refined complexity results for the Goldstein method for approximating stationary points in locally Lipschitz (nonconvex) optimization. To do so, we adapt the results from \cite{davis2022gradient} -- which pertain to Lipschitz objectives -- to the \gradBMV\ class of functions studied in this paper.

In what follows, we consider a function $f$ with (local) Lipschitz constant $M$ and local variation of subgradients bounded by $\widehat L_r$ for some $r>0$. For any vector $\vg$, we let $\hat\vg:=\frac{\vg}{\|\vg\|_2}$. The following lemma can be seen as an extension of \cite[Lemma 2.2]{davis2022gradient} to this setting. 
In what follows, we will make the particular choice $r=2\delta$. 

\begin{lemma} \label{lem:norm_contraction_Goldstein}
Let $f:\rr^d\to\rr$ be an $M$-locally Lipschitz function. Let $\vg\in \partial_{\delta} f(\vx)$ be such that $\|\vg\|_2>\epsilon$ and
\begin{equation} \label{eqn:non_suff_decrease} f(\vx-\delta\hat \vg)-f(\vx)\geq -\frac{\delta}{2}\|\vg\|_2. 
\end{equation}
Let $p\geq 1$ be an integer, 
$\vh\sim \unif(\cb_{2^{-p}}(\vg))$, and $\vu=\grad(\vy)$, where $\vy\sim \unif([\vx,\vx-\delta\hat \vh])$. Then 
\[ \mathbb{E}\langle \vu,\vg\rangle \leq 
\frac12\|\vg\|_2^2+2M\|\vg\|_2 2^{-p} 
\]
In particular, if $p\geq \log_2\big(\frac{12M}{\epsilon}\big)$, there exists $\lambda\in[0,1]$ such that if $\vz=\vg+\lambda(\vu-\vg)$, then
\[ 
\mathbb{E}\|\vz\|_2^2 
\leq 
\left\{
\begin{array}{rl}
\|\vg\|_2^2-\frac{\|\vg\|_2^4}{9\Llocr^2},& \mbox{ if } \|\vg\|_2^2\leq 3\Llocr\\
\frac{2}{3}\|\vg\|_2^2,  &\mbox{ if }\|\vg\|_2^2 > 3\Llocr
\end{array}
\right.
,\]
where $r = 2\delta.$
\end{lemma}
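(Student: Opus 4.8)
The plan is to follow the structure of \cite[Lemma 2.2]{davis2022gradient} but replace the Lipschitz-based smoothness estimate with the \gradBMV\ property at radius $r=2\delta$. First I would establish the inner-product bound $\mathbb{E}\langle \vu,\vg\rangle \leq \frac12\|\vg\|_2^2 + 2M\|\vg\|_2 2^{-p}$. The key observation is that $\vu = \grad(\vy)$ with $\vy$ uniform on the segment $[\vx,\vx-\delta\hat\vh]$, so by the fundamental theorem of calculus (Theorem~\ref{thm:FTC}), $\mathbb{E}_{\vy}\langle \vu, \vx-\delta\hat\vh-\vx\rangle = f(\vx-\delta\hat\vh)-f(\vx)$, i.e. $-\delta\,\mathbb{E}_\vy\langle\vu,\hat\vh\rangle = f(\vx-\delta\hat\vh)-f(\vx)$. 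I would then compare $\hat\vh$ with $\hat\vg$: since $\vh\sim\unif(\cb_{2^{-p}}(\vg))$ and $\|\vg\|_2>\epsilon$, the perturbation $\vh-\vg$ has norm at most $2^{-p}$, so $\|\hat\vh - \hat\vg\|_2 = O(2^{-p}/\|\vg\|_2)$ (standard estimate for normalization of nearby vectors). Combining this with the non-sufficient-decrease hypothesis \eqref{eqn:non_suff_decrease} — which says $f(\vx-\delta\hat\vg)-f(\vx)\geq -\frac{\delta}{2}\|\vg\|_2$ — and local $M$-Lipschitzness to control the discrepancy between moving along $\hat\vh$ versus $\hat\vg$, I would get $-\delta\,\mathbb{E}_\vy\langle\vu,\hat\vg\rangle \geq -\frac{\delta}{2}\|\vg\|_2 - M\delta\|\hat\vh-\hat\vg\|_2$, which after multiplying by $-\|\vg\|_2/\delta$ and using $\|\hat\vh-\hat\vg\|_2 \lesssim 2^{-p}/\|\vg\|_2$ yields $\mathbb{E}\langle\vu,\vg\rangle \leq \frac12\|\vg\|_2^2 + 2M\|\vg\|_2 2^{-p}$. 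Taking $p\geq\log_2(12M/\epsilon)$ makes the error term at most $\frac{\|\vg\|_2^2}{6}$ (using $\|\vg\|_2>\epsilon$), so $\mathbb{E}\langle\vu,\vg\rangle \leq \frac{2}{3}\|\vg\|_2^2$.

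Next I would bound $\|\vu-\vg\|_2$. Since $\vg\in\partial_\delta f(\vx)$ is a convex combination of gradients at points in $\cb_\delta(\vx)$ and $\vu=\grad(\vy)$ with $\vy\in[\vx,\vx-\delta\hat\vh]\subseteq \cb_\delta(\vx)$, every contributing gradient lies within a ball of radius $\delta$ of $\vy$ — hence within radius $2\delta = r$ — so by the \gradBMV\ property (Definition~\ref{def:bounded-max-loc-var}) each such difference is at most $\Llocr$, and by convexity (the triangle inequality applied to the convex combination) $\|\vu-\vg\|_2 \leq \Llocr$ deterministically (or at least in expectation, which is all we need). This is the step where the \gradBMV\ assumption at radius $2\delta$ enters, and the choice $r=2\delta$ is exactly what makes this work.

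Finally I would run the one-step descent argument on $\vz = \vg + \lambda(\vu-\vg)$. Expanding, $\|\vz\|_2^2 = \|\vg\|_2^2 + 2\lambda\langle\vg,\vu-\vg\rangle + \lambda^2\|\vu-\vg\|_2^2 \leq \|\vg\|_2^2 - \frac{2\lambda}{3}\|\vg\|_2^2 + \lambda^2\Llocr^2$ in expectation (using $\mathbb{E}\langle\vu,\vg\rangle\leq\frac23\|\vg\|_2^2$ so $\mathbb{E}\langle\vg,\vu-\vg\rangle \leq -\frac13\|\vg\|_2^2$, and $\|\vu-\vg\|_2^2\leq\Llocr^2$). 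Then I optimize over $\lambda\in[0,1]$: the unconstrained minimizer is $\lambda^\star = \frac{\|\vg\|_2^2}{3\Llocr^2}$. If $\|\vg\|_2^2 \leq 3\Llocr$... (here I should be careful — comparing $\|\vg\|_2^2$ with $3\Llocr$ rather than $3\Llocr^2$ looks like it might be a typo in the statement, but I will take it as given) then $\lambda^\star \leq 1$ is feasible and plugging in gives $\mathbb{E}\|\vz\|_2^2 \leq \|\vg\|_2^2 - \frac{\|\vg\|_2^4}{9\Llocr^2}$; otherwise take $\lambda = 1$, giving $\mathbb{E}\|\vz\|_2^2 \leq \|\vg\|_2^2 - \frac23\|\vg\|_2^2 + \Llocr^2 \leq \frac23\|\vg\|_2^2$ when $\|\vg\|_2^2 > 3\Llocr$ (the last inequality needs $\Llocr^2 \leq \frac{1}{3}\|\vg\|_2^2 \cdot(\text{something})$, which I would need to reconcile with the threshold as stated).

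\textbf{Main obstacle.} The genuinely delicate part is the first bound: carefully tracking how the random normalization $\hat\vh$ deviates from $\hat\vg$ and propagating that through the function-value comparison using only local Lipschitzness, to land exactly on the $2M\|\vg\|_2 2^{-p}$ term with the right constant. The descent-lemma optimization over $\lambda$ is routine once the two ingredients ($\mathbb{E}\langle\vu,\vg\rangle$ bound and $\|\vu-\vg\|_2\leq\Llocr$) are in hand, though I would want to double-check the $3\Llocr$ versus $3\Llocr^2$ thresholds and dimensional consistency in the final case split against what is actually written.
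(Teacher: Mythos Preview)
Your proposal is correct and follows essentially the same route as the paper: fundamental theorem of calculus along the segment $[\vx,\vx-\delta\hat\vh]$, Lipschitz control of the discrepancy between moving along $\hat\vh$ versus $\hat\vg$ (the paper incurs two $M\|\hat\vh-\hat\vg\|$ terms, one from $f(\vx-\delta\hat\vh)-f(\vx-\delta\hat\vg)$ and one from $\langle\vu,\hat\vh-\hat\vg\rangle$ with $\|\vu\|_2\le M$, which together give the factor $2M$), then $\|\vu-\vg\|_2\le\Llocr$ via \gradBMV\ at radius $2\delta$, and finally the quadratic minimization over $\lambda$. Your suspicion about the threshold is well founded: the paper's proof actually splits on $\|\vg\|_2^2\lessgtr 3\Llocr^2$ (not $3\Llocr$), which is what makes the $\lambda=1$ case close, so the statement as printed carries a typo.
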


\begin{proof}
Since $\hat \vh$ is generic, $f$ is differentiable in almost every point on the interval $[\vx,\vx-\delta\hat \vh]$. Hence, by inequality \eqref{eqn:non_suff_decrease} and the first theorem of calculus,
\begin{align*}
\frac12\|\vg\|_2 &\geq \frac{f(\vx)-f(\vx-\delta\hat\vg)}{\delta}
=\frac{f(\vx)-f(\vx-\delta\hat\vh)}{\delta}+\frac{f(\vx-\delta\hat\vh)-f(\vx-\delta\hat\vg)}{\delta}\\
&\geq \frac{1}{\delta}\int_0^{\delta}\langle \grad(\vx-\tau\hat \vh),\hat\vh\rangle d\tau- M\|\hat\vh-\hat\vg\|\\ &\geq \mathbb{E}\langle \vu,\hat\vg\rangle-2M\|\hat \vh-\hat \vg\|_2.
\end{align*}
In particular,
\[ 
\mathbb{E}\langle \vu,\vg\rangle
\leq \frac12\|\vg\|_2^2+2M\|\vg\|_2 2^{-p}.
\]

Notice that if $p\geq \log_2\big(\frac{12M}{\epsilon}\big)$, then 
$\mathbb{E}\langle \vu,\vg\rangle \leq \frac12\|\vg\|_2^2+2M\|\vg\|_2 2^{-p}\leq \frac{2}{3}\|\vg\|_2^2.$
For the rest of the proof we impose this assumption. 
Now, we consider the random variable $\vz=\vg+\lambda(\vu-\vg)$. We have 
\begin{align*}
\mathbb{E}\|\vz\|_2^2 &= \|\vg\|_2^2+2\lambda \mathbb{E}\langle\vg,\vu-\vg\rangle+\lambda^2\mathbb{E}\|\vu-\vg\|_2^2 
 \leq \big(1-\frac{2\lambda}{3}\big)\|\vg\|_2^2+\lambda^2\Llocr^2.
\end{align*}
Here we have two choices. First, if $\|\vg\|_2^2<3\Llocr^2$, then we can set $\lambda=\|\vg\|_2^2/[3\Llocr^2]$, leading to
$\mathbb{E}\|\vz\|^2\leq \|\vg\|_2^2\big(1-\frac{\|\vg\|_2^2}{9\Llocr^2}\big).$ 
Otherwise, set $\lambda=1$, which leads to
$\mathbb{E}\|\vz\|^2\leq \frac13\|\vg\|_2^2+\Llocr^2\leq \frac23\|\vg\|_2^2,$ 
completing the proof.
\end{proof}

With this technical lemma, the algorithm and its analysis follow naturally. The algorithm performs gradient descent-style steps using a vector from the Goldstein subdifferential chosen at random. If the subgradient has norm smaller than $\epsilon$, then the algorithm stops and outputs the current iterate; alternatively, if the subgradient provides sufficient decrease, we update the vector taking a normalized step of length $\delta$ in this direction; finally, if neither of the above holds, the algorithm enters a loop where -- due to Lemma \ref{lem:norm_contraction_Goldstein} -- we can find elements in the Goldstein subdifferential which decrease the subgradient norm multiplicatively. In particular, either this loop leads to a ``sufficient decrease'' step, or we obtain a subgradient with norm less than $\epsilon$. The convergence analysis of the algorithm follows from a combination of the sufficient decrease steps and a bound on the length of each internal loop.

\begin{algorithm}
\caption{Interpolated Normalized Gradient Descent}\label{alg:INGD}
\begin{algorithmic}[1]
\STATE {{\bf Initialization}: $x_0\in\rr^d$}
\FOR{$k= 0,\ldots,K$}
    \STATE{$\vu\sim \unif(\cB_{\delta}(\vx_k))$}
    \STATE{$\vg_k=\grad(\vu)$} 
    \WHILE{\texttt{True}}
        \IF{$\|\vg_k\|_2 \leq \epsilon$}
            \STATE{\bf{Stop algorithm and return $\vx_k$}
        }
        \ELSE
        \IF{$f(\vx_k-\delta\hat\vg_k)-f(\vx_k)\leq -\frac{\delta}{2}\|\vg_k\|_2$}
           \STATE{ $\vx_{k+1} \gets \vx_k - \delta\hat \vg_k$}
            \STATE {\bf Break while}
        \ELSE
            \WHILE{\texttt{True}}
               \STATE{ $\vh\sim \unif(\cB_{2^{-p}}(\vg_k))$ with $p=\log_2(12M/\epsilon)$}
              \STATE{  $\vy\sim\unif([\vx_k,\vx_k-\delta\hat \vh])$}
               \STATE{ $\vu \gets \grad(\vy)$}
               \STATE{ $\lambda \gets \min\big\{1,\frac{\|\vg_k\|_2^2}{3\Llocr^2}\big\}$}
               \STATE{ $\tilde\vg\gets \vg_k+\lambda(\vu-\vg_k)$}
                \IF{$\|\tilde\vg\|_2^2\leq \|\vg_k\|_2^2-\frac{\|\vg_k\|_2^4}{18L_{2\delta}^2}$ or $\|\tilde\vg\|_2^2\leq \frac{3\|\vg_k\|_2^2}{4}$}
                   \STATE{ $\vg_k\gets\tilde\vg$}
                    \STATE {\bf Break while}
                \ENDIF    %
            \ENDWHILE
        \ENDIF
        \ENDIF
    \ENDWHILE
\ENDFOR
\RETURN {$\vx_K$}
\end{algorithmic}
\end{algorithm}

\begin{theorem} \label{thm:convergence_INGD}
Let $f:\rr^d\to\rr$ be an $M$-locally Lipschitz function with \gradBMV\ constant $\Llocr$ for $r>0$. Let $\vx_0\in\rr^d$ be such that $f(\vx_0)- f_*\leq \Delta$. Then, with probability at least $1-\beta$,  
Algorithm \ref{alg:INGD} outputs a $(\delta,\epsilon)$-stationary point after $O\Big( \frac{\Delta\Llocr^2}{\epsilon^3\delta}\ln\big(\frac{\Delta}{\epsilon\delta\beta}\big)\Big)$ (sub)gradient oracle queries to $f$.
\end{theorem}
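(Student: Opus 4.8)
The plan is to combine three ingredients: (i) a count of the ``sufficient-decrease'' outer steps via the drop in $f$; (ii) a potential argument, driven by Lemma~\ref{lem:norm_contraction_Goldstein}, bounding the expected number of inner-loop oracle calls spent inside a single outer iteration; and (iii) a martingale concentration step that turns the resulting in-expectation bound into a high-probability one and produces the logarithmic factor. For (i): every vector $\vg_k$ the algorithm holds is a convex combination of gradients $\nabla f(\vy)$ with $\vy\in\cB_{\delta}(\vx_k)$, hence $\vg_k\in\partial_{\delta}f(\vx_k)$, so whenever the algorithm returns (the guard $\|\vg_k\|_2\le\epsilon$) it returns a $(\delta,\epsilon)$-stationary point. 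Each time the update $\vx_{k+1}=\vx_k-\delta\hat\vg_k$ fires, the triggering guard gives $f(\vx_{k+1})-f(\vx_k)\le-\tfrac{\delta}{2}\|\vg_k\|_2\le-\tfrac{\delta\epsilon}{2}$ (we reach that guard only when $\|\vg_k\|_2>\epsilon$); since $f\ge f_*\ge f(\vx_0)-\Delta$ there are at most $2\Delta/(\delta\epsilon)$ such steps, so with $K=\lceil 2\Delta/(\delta\epsilon)\rceil$ the \texttt{for} loop cannot be exhausted and the algorithm must terminate via the return branch. What remains is to count oracle calls.

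For (ii), fix an outer index $k$ and look at the oracle calls made inside its inner loop. While there, $\vg_k$ always satisfies $\|\vg_k\|_2>\epsilon$ and the non-sufficient-decrease inequality~\eqref{eqn:non_suff_decrease}, so Lemma~\ref{lem:norm_contraction_Goldstein} applies to each draw $\tilde\vg$ and gives $\mathbb{E}[\|\tilde\vg\|_2^2\mid\vg_k]\le\|\vg_k\|_2^2-\tfrac{\|\vg_k\|_2^4}{9\Llocr^2}$ when $\|\vg_k\|_2^2\le 3\Llocr^2$ (and a geometric contraction otherwise). I would take the potential $\Phi:=1/\|\vg_k\|_2^2$. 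It changes only on an accepted draw, where $\tfrac{1}{\|\tilde\vg\|_2^2}-\tfrac{1}{\|\vg_k\|_2^2}\ge\tfrac{(\|\vg_k\|_2^2-\|\tilde\vg\|_2^2)_+}{\|\vg_k\|_2^4}$, while a rejected draw fails the operative acceptance test, forcing $(\|\vg_k\|_2^2-\|\tilde\vg\|_2^2)_+<\tfrac{\|\vg_k\|_2^4}{18\Llocr^2}$. Taking conditional expectations and using $\mathbb{E}[\|\vg_k\|_2^2-\|\tilde\vg\|_2^2\mid\vg_k]\ge\tfrac{\|\vg_k\|_2^4}{9\Llocr^2}$ yields $\mathbb{E}[\Delta\Phi\mid\vg_k]\ge\tfrac{1}{9\Llocr^2}-\tfrac{1}{18\Llocr^2}=\tfrac{1}{18\Llocr^2}$; that is, each inner-loop oracle call raises $\Phi$ by at least $\tfrac{1}{18\Llocr^2}$ in expectation. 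Since the inner loop is abandoned as soon as $\|\vg_k\|_2\le\epsilon$ (i.e.\ $\Phi\ge 1/\epsilon^2$) or a sufficient-decrease step becomes available, an optional-stopping (Wald) argument bounds the expected number of inner-loop calls inside outer iteration $k$ by $O(\Llocr^2/\epsilon^2)$; the regime $\|\vg_k\|_2^2>3\Llocr^2$ contributes only $O(\log(M/\Llocr))$ calls via the geometric contraction and is lower order. Adding the single oracle call per outer iteration and summing over the at most $O(\Delta/(\delta\epsilon))$ outer iterations gives $O\big(\tfrac{\Delta\Llocr^2}{\delta\epsilon^3}\big)$ oracle calls in expectation.

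For (iii), I would carry the same bookkeeping globally. The functional $\Psi_t:=\tfrac{c}{\delta\epsilon^3}\big(f(\vx_{k(t)})-f_*\big)-1/\|\vg_{k(t)}\|_2^2$, with $c$ an absolute constant, starts at $O(\Delta/(\delta\epsilon^3))$, stays $\ge-1/\epsilon^2$ while the algorithm runs, decreases by $\Omega(1/\Llocr^2)$ in conditional expectation on every inner-loop oracle call, and its at most $O(\Delta/(\delta\epsilon))$ ``resets'' of the $1/\|\vg_k\|_2^2$ term at outer-iteration boundaries are paid for by the accompanying sufficient decrease of $f$. Applying a Freedman-type supermartingale tail inequality to $\Psi_t$ plus $1/(18\Llocr^2)$ times the running count of inner-loop calls---after checking that the increments are bounded, which uses that once $\|\vg_k\|_2^2=O(\Llocr^2)$ the perturbation term in Lemma~\ref{lem:norm_contraction_Goldstein} keeps $\|\tilde\vg\|_2^2\ge\tfrac12\|\vg_k\|_2^2$ whenever $\|\vg_k\|_2$ is within a constant of $\epsilon$---and union bounding over the $O(\Delta/(\delta\epsilon))$ outer iterations upgrades the bound to one holding with probability $1-\beta$ at the cost of the factor $\ln\big(\tfrac{\Delta}{\epsilon\delta\beta}\big)$.

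The hard part is the potential estimate in (ii). Getting only $\Llocr^2/\epsilon^2$ inner-loop calls per outer iteration---rather than a larger power of $\Llocr/\epsilon$, which is what a naive ``expected number of rejection trials per accepted decrease'' count produces---requires charging progress to every drawn $\tilde\vg$, not just the accepted ones, and this works only because the expected decrease guaranteed by Lemma~\ref{lem:norm_contraction_Goldstein} is exactly a factor $2$ larger than the decrease the acceptance rule insists on, so the ``near-miss'' rejections forfeit at most half of the guaranteed drift. The subsidiary nuisance is that an accepted draw could a priori send $\|\tilde\vg\|_2^2$ close to $0$ and make the $\Phi$-increments unbounded; controlling this via the ``$\|\tilde\vg\|_2^2\ge\tfrac12\|\vg_k\|_2^2$'' observation noted in (iii) is what keeps both the Wald step and the concentration step routine rather than delicate.
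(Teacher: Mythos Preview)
Your proposal is correct and takes a genuinely different route from the paper. The paper's proof is much shorter: it applies Markov's inequality to Lemma~\ref{lem:norm_contraction_Goldstein} to show that each innermost-while draw satisfies the acceptance criterion with probability at least $\|\vg_k\|_2^2/(15\Llocr^2)\ge\epsilon^2/(15\Llocr^2)$ (and at least $1/9$ in the large-norm regime), then declares that $J_k=O\big(\tfrac{\Llocr^2}{\epsilon^2}\ln(K/\beta)\big)$ trials per for-loop index $k$ suffice with failure probability $\beta/K$, and finishes by a union bound over the $K=O(\Delta/(\delta\epsilon))$ indices. As written, this glosses over the number of outer-while iterations inside a fixed $k$: each accepted $\tilde\vg$ only advances the outer while by one step, and one still needs $O(\Llocr^2/\epsilon^2)$ acceptances to drive $1/\|\vg_k\|_2^2$ up to $1/\epsilon^2$, so a literal reading of the paper's count misses a factor of $\Llocr^2/\epsilon^2$. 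Your potential argument in (ii) is precisely what makes the counting rigorous without losing that factor: by charging \emph{every} draw---not just accepted ones---to the drift in $\Phi=1/\|\vg_k\|_2^2$, using that rejected draws necessarily have $(\|\vg_k\|_2^2-\|\tilde\vg\|_2^2)_+<\|\vg_k\|_2^4/(18\Llocr^2)$ while Lemma~\ref{lem:norm_contraction_Goldstein} guarantees $\mathbb{E}[\|\vg_k\|_2^2-\|\tilde\vg\|_2^2\mid\vg_k]\ge\|\vg_k\|_2^4/(9\Llocr^2)$, you get the tight $O(\Llocr^2/\epsilon^2)$ expected oracle calls per outer iteration in one stroke. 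The bounded-increment observation (that $\|\tilde\vg\|_2\ge\|\vg_k\|_2-\lambda\Llocr$ with $\lambda=\|\vg_k\|_2^2/(3\Llocr^2)$ forces $\|\tilde\vg\|_2^2\ge c\|\vg_k\|_2^2$ for an absolute $c>0$ throughout the small-norm regime, not only near $\epsilon$) then makes both the Wald step and the Freedman concentration in (iii) routine. Your approach is more elaborate but also more complete; the paper's is terser but, read literally, leaves the outer-while loop unaccounted for.
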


\begin{proof}
Due to our assumption on the suboptimality of $\vx_0$, we note that the condition $f(\vx_k-\delta\hat\vg_k)-f(\vx_k)\leq -\frac{\delta}{2}\|\vg_k\|_2$ can be satisfied only at most $K=\frac{2\Delta}{\epsilon\delta}$ times before stopping. Indeed, if there are $K$ descent steps, 
\[ \frac{K\epsilon\delta}{2}\leq \sum_{k=0}^{K-1} [f(\vx_{k+1})-f(\vx_{k})] = f(\vx_0)-f(\vx_K) \leq \Delta. \]

On the other hand, the length of the innermost while loop can be bounded using Lemma \ref{lem:norm_contraction_Goldstein}. We proceed by considering the possible cases for the value of $\|\vg_k\|_2$ in iteration $k$. First,  if $\|\vg_k\|_2^2\leq 3\Llocr$, then by the chosen values of $p$ and $\lambda$, we have that $\mathbb{E}\|\tilde\vg\|_2^2\leq  \|\vg_k\|_2^2\Big(1-\frac{\|\vg_k\|_2^2}{9\Llocr^2}\Big)$. Now, by Markov's inequality,
\[ 
\mathbb{P}\Big[ \|\tilde\vg\|^2>  \|\vg_k\|_2^2\Big(1-\frac{\|\vg_k\|_2^2}{18\Llocr^2}\Big) \Big]
\leq \frac{1-\frac{\|\vg_k\|_2^2}{9\Llocr^2}}{1-\frac{\|\vg_k\|_2^2}{18\Llocr^2}}
\leq 1 - \frac{\|\vg_k\|_2^2/[18\Llocr^2]}{1-\frac{\|\vg_k\|_2^2}{18\Llocr^2}}
\leq 1-\frac{\|\vg_k\|_2^2}{15 \Llocr^2}.
\]
This implies that after $J_k$ passes over the inner loop, the probability of exiting the loop is $\big(1-\frac{\|\vg_k\|_2^2}{15 \Llocr^2}\big)^{J_k}$, and to make this probability smaller than $\beta/K$, it suffices to have  $J_k =\frac{15\Llocr^2}{\epsilon^2}\ln\big(\frac{K}{\beta}\big)$. 
In the remaining case $\|\vg_k\|_2^2>3\Llocr^2$, we have by a similar 
reasoning: 
$
\mathbb{P}\big[ \|\tilde\vg\|^2>  \frac34\|\vg_k\|_2^2 \big]
\leq \frac89.
$ 
Hence, the length of the inner loop is at most $J_k=[\ln(8/9)]^{-1} \ln\big(\frac{K}{\beta}\big).$

In conclusion, by the union bound and the previous reasoning,  with probability at most $1-\beta$, the number of iterations (subgradient oracle queries) that the algorithm makes is at most $\sum_{k=0}^K J_k = O\Big( \frac{\Delta\Llocr^2}{\epsilon^3\delta}\ln\big(\frac{\Delta}{\epsilon\delta\beta}\big)\Big)$.
\end{proof}

A few remarks are in order here. First, same as in the settings of convex optimization considered earlier in this section, the resulting oracle complexity upper bound is \emph{independent} of the (local) Lipschitz constant of $f.$ However, here we crucially rely on the assumption that the  (local) Lipschitz constant $M$ of $f$ is finite to ensure that vectors $\vh$ utilized by the algorithm are random. We note that the constant $M$ need not be known to the algorithm; instead, it can be adaptively estimated with only a logarithmic overhead in the complexity, by simply choosing $p$ growing within the innermost while loop. On the other hand, making the algorithm independent of $\Llocr$ (and thus fully parameter-free) appears to be more challenging and is an interesting question for future research. We note that obtaining a parameter-free version of the Goldstein method with provable convergence guarantees is open even in the case of Lipschitz nonsmooth nonconvex optimization studied in \cite{davis2022gradient,zhang2020complexity}.

Finally, because $\Llocr \leq 2L,$ Theorem \ref{thm:convergence_INGD} recovers the previously known bounds for Lipschitz-continuous nonconvex nonsmooth optimization \cite{davis2022gradient,zhang2020complexity}. By a similar reasoning as in Section \ref{sec:approx-smooth-opt}, we can also draw conclusions about convergence of Algorithm \ref{alg:INGD} in $(M, \kappa)$-weakly smooth settings, for $\kappa \in (0, 1].$ In this case, the function is differentiable and its gradient is H\"{o}lder-continuous. From the definition of {$\partial_r f(\vx)$}  we can further deduce that the output point $\vx_K$ of Algorithm \ref{alg:INGD} in this case satisfies, by the triangle inequality:
$
    \|\nabla f(\vx_K)\|_2 \leq \epsilon + M r^\kappa, 
$
where we recall $r = 2\delta.$ In particular, if $r = (\frac{\epsilon}{M})^{1/\kappa},$ we have  $\|\nabla f(\vx_K)\|_2 \leq 2 \epsilon.$ The total number of oracle queries in this case is $k = \tilde{O}\big(\frac{\Delta M^2 r^{2\kappa}}{\epsilon^3 r}\big) = \tilde{O}\big(\frac{\Delta M^{1/\kappa}}{\epsilon^{1 + 1/\kappa}}\big).$ For the case of smooth functions ($\kappa = 1$), this oracle complexity is optimal up to a logarithmic factor \cite{carmon2017lower-II}, and the same result was established for Goldstein's method in \cite{zhang2020complexity}, using a different argument based on a descent condition being satisfied in each iteration. Here we obtain oracle complexity results for all weakly smooth functions and for nonsmooth Lipschitz functions,  based on \emph{one} result, stated in Theorem~\ref{thm:convergence_INGD}. It is an open question if this oracle complexity upper bound is (near) optimal for $\kappa \in (0, 1),$ though we conjecture it is.  

\section{Conclusion}

We introduced new classes of nonsmooth optimization problems based on local (maximum or average) variation of the function's subgradient and showed that this perspective generalizes  classical results in optimization based on Lipschitz continuity and weak smoothness, leading to more fine-grained oracle complexity bounds. On a conceptual level, one bottom line of our work is that it is not the growth of the function that  determines complexity, but how its slope  changes over small regions. Another is that complexity of parallel convex optimization depends on the complexity of the subdifferential set around optima. %

As a byproduct of our results, we showed that -- contrary to prior belief based on lower bounds \cite{nemirovski1994parallel,diakonikolas2020lower, balkanski2019exponential,woodworth2018graph,bubeck2019complexity} -- the complexity of parallel optimization can, in fact, be improved even in high-dimensional settings under fairly mild assumptions about the complexity of the subdifferential set around optima. All that is needed is that the algorithm is given slightly more power: to be able to query points outside the unit ball. As a specific example, functions that can be expressed as or closely approximated by piecewise linear functions with polynomially many pieces in the dimension $d$ and the inverse accuracy $1/\epsilon$ can benefit from parallelization in terms of sequential oracle complexity (parallel ``depth'') by a factor $\tilde{\Omega}(1/\epsilon)$ so long as we are allowed to query them at points at distance   $O(\epsilon\sqrt{d})$ from the feasible set. {Despite the seemingly specific nature of this example, minimizing a maximum of linear functions has been a key focus of research in nonsmooth convex optimization, and some of the most important developments in this area were inspired by this example \cite{Nesterov:2005minimizing,nemirovski2004prox}. Our results not only provide an alternative view of this setting, but also a broader perspective on functional classes that are amenable to parallelization by randomization.}

Some interesting questions that merit further investigation remain. For example, can recent techniques on adaptive, parameter-free optimization \cite{malitsky2020adaptive,li2023simple} be generalized to our \gradBMV\ class and lead to algorithms that are both universal and parameter free but avoid the line search used in our result? Can parallel optimization methods based on randomized smoothing be made parameter-free while maintaining the oracle complexity benefits described in our work? Is it possible for the Goldstein's method we analyzed on the \gradBMV\ class in Section~\ref{sec:nonconvex-Goldstein} to be made completely parameter-free?  Finally, while we did not pursue this direction, there are more sophisticated algorithms for parallel convex optimization based on randomized smoothing and higher-order optimization \cite{bubeck2019complexity,chakrabarty2024parallel}. It seems plausible that the use of such techniques could further reduce parallel complexity of nonsmooth optimization for piecewise linear functions. It would be interesting to formally establish such a result. 

\section*{Acknowledgements}

J.\ Diakonikolas's research was partially supported by the Air Force Office of Scientific Research under award number FA9550-24-1-0076 and by the U.S.\ Office of Naval Research under contract number  N00014-22-1-2348. Any opinions, findings and conclusions or recommendations expressed in this material are those of the author(s) and do not necessarily reflect the views of the U.S. Department of Defense. 

C.~Guzm\'an's research was
partially supported by
INRIA Associate Teams project, ANID FONDECYT 1210362 grant, ANID Anillo ACT210005 grant, and National Center for Artificial Intelligence CENIA FB210017, Basal ANID.

\bibliographystyle{abbrv}
\bibliography{references.bib}

\end{document}